\newtheorem{lemma}{Lemma}[section]
\newtheorem{theorem}[lemma]{Theorem}
\newtheorem{proposition}[lemma]{Proposition}
\newtheorem{corollary}[lemma]{Corollary}
\theoremstyle{definition}
\newtheorem{definition}[lemma]{Definition}
\theoremstyle{remark}
\newtheorem{remark}[lemma]{Remark}
\newtheorem{example}[lemma]{Example}
\newtheorem{question}[lemma]{Question}
\newcommand{\kod}{\kappa}
\newcommand{\vol}{\textrm{vol}}
\newcommand{\Pic}{\textrm{Pic}}
\newcommand{\Q}{\mathbb{Q}}
\newcommand{\Z}{{\mathbb Z}}
\newcommand{\R}{\mathbb{R}}
\newcommand{\N}{\mathbb{N}}
\newcommand{\C}{\mathbb{C}}
\newcommand{\Supp}{\textrm{Supp}}
\newcommand{\supp}{\mbox{Supp}}
\newcommand{\ord}{\mbox{ord}}
\def\ord{\operatorname{ord}}
\def\supp{\text{Supp}}
\def\codim{\mathrm{codim}}
\def\Z{{\mathbb Z}}
\def\N{{\mathbb N}}
\def\R{{\mathbb R}}
\def\Q{{\mathbb Q}}
\def\C{{\mathbb C}}
\def\Bs{\mathop{\rm Base}\nolimits}
\def\codim{\mathop{\rm codim}\nolimits}
\def\Im{\mathop{\rm Im}\nolimits}
\def\Pic{\mathop{\rm Pic}\nolimits}
\def\supp{\mathop{\rm Supp}\nolimits}
\def\vol{\mathop{\rm vol}\nolimits}
\def\tilde{\widetilde}
\def\phi{\varphi}
\def\k{{\kappa}}
\def\cJ{{\mathcal J}}
\def\cO{{\mathcal O}}
\def\ord{\mathop{\rm ord}\nolimits}
\def\ge{\geqslant}
\theoremstyle{theorem}
\begin{document}

\title{Restricted volumes of effective divisors}
\thanks{{\it 2010 Math classification:} 14C20, 14F18}
\thanks{\emph{Key words:} asymptotic intersection number, canonical divisor, Fujita approximation, (multi)graded series, multiplier ideal, Okounkov body, restricted volumes}
\thanks{\today}
\author{Lorenzo Di Biagio and Gianluca Pacienza}
\address{Instytut Matematyki Uniwersytetu Warszawskiego, ul. S. Banacha 2, 02-097 Warszawa -- Poland }
\email{lorenzo.dibiagio@gmail.com}
\address{ IRMA, Universit\'e de Strasbourg et CNRS, 7 rue R. Descartes, 67084 Strasbourg Cedex -- France}
\email{pacienza@math.unistra.fr }

\begin{abstract}
We study the restricted volume of effective divisors, its properties and the relationship with the related notion of reduced volume, defined via multiplier ideals, and with the asymptotic intersection number. We build upon the fundamental work of Lazarsfeld and Must\u a\c ta relating the restricted volume of big divisors to the volume of the associated Okounkov body. We extend their constructions and results to the case of effective divisors, recovering some results of Kaveh and Khovanskii, proving a Fujita-type approximation in this larger setting and studying the restricted volume function. In order to relate the reduced volume and the asymptotic intersection number we investigate a boundedness property of asymptotic multiplier ideals and prove it holds, for instance, for finitely generated divisors. In this way we obtain also a complete picture for the canonical divisor of an arbitrary smooth projective variety and for nef divisors on varieties of dimension at most 3.
\end{abstract}

\maketitle


\setcounter{section}{0}
\setcounter{lemma}{0}

%
\section{Introduction}
%

%
\subsection{Motivations}
%
Let $X$ be a smooth complex projective variety of dimension $n$ and let $D$ be a Cartier divisor. As it is well known, ample divisors display beautiful geometric properties, as well as cohomological and numerical ones. For long time it was thought that little, instead, could be said about general effective divisors. Anyway, at the beginning of this new century it became clear (see, e.g., \cite{Nakayama}, \cite{LazI}, \cite{ELMNPbundles}) that the classes of big divisors share some of these properties if one is willing to work asymptotically: if $D$ is big, usually $D^n$ does not carry immediate geometric information, while it is the \emph{volume} of $D$, $\vol(D):=\lim_{m \rightarrow +\infty } \frac{h^0(X,\mathcal{O}_X(mD))}{m^n/n!}$, that  is actually the right generalization of the top intersection number of ample divisors.

Let now $V$ be a $d$-dimensional irreducible subvariety of $X$. If $A$ is ample then the intersection number $A^d \cdot V$ plays an important r\^ole in many geometric questions. Along the previous lines, Ein \emph{et al.} started in \cite{ELMNPrestricted} a thorough study of asymptotic analogues of this degree for big divisors on $V$. \\
In case of an arbitrary divisor $D$, the \emph{restricted volume} of $D$ along $V$ is $$\vol_{X|V}(D):=\limsup_{m \rightarrow + \infty} \frac{\dim  H^0(X|V,\mathcal{O}_X(mD))}{m^d/d!},$$
where $H^0(X|V,\mathcal{O}_X(mD)):= \mathrm{Im} \left( H^0(X,\mathcal{O}_X(mD)) \stackrel{\mathrm{restr}_V}{\rightarrow} H^0(V,\mathcal{O}_V(mD)) \right) $.
In the big case, restricted volumes  have played an important r\^ole in the proof of boundedness of pluricanonical maps of varieties of general type (see \cite{HcMcK}, \cite{Takayama}), in the Fujita type results (see, e.g., \cite[Theorem 2.20]{ELMNPrestricted} and \cite[Proposition 5.3]{Takayama}), as well as in the study of the cone of pseudoeffective divisors \cite{BDPP}.

For an ample divisor $A$, since by Serre's vanishing theorem the restriction maps are eventually surjective, we have 
\begin{equation} \label{eq:ampio}
\vol_{X|V}(A)= \vol_V (A_{|V})=A^d \cdot V.
\end{equation}
For an arbitrary divisor $D$ on $X$,  different generalizations of the intersection number have been studied in the recent literature (cf. \cite{ELMNPrestricted} and \cite{Takayama}): apart from $\vol_{X|V}(D)$, the \emph{reduced volume} $\mu(V,D)$ of $D$ along $V$, and  the \emph{asymptotic intersection number} $\|D^d\cdot V\|$ of $D$ and $V$:
$$\mu(V,D) := \limsup_{m \rightarrow + \infty} \frac{\dim\left( H^0(\mathcal{O}_V(mD) \otimes \mathcal{J}(X, \|mD\|)_V \right)}{m^d/d!},$$ 
$$\|D^d \cdot V\|= \limsup_{m \rightarrow + \infty} \frac{\sharp \left( V \cap D_{m,1} \cap \dots \cap D_{m,d} \setminus \mathrm{Bs}(|mD|)\right)}{m^d/d!},$$ 
where $\mathcal{J}(X,\|mD\|)_V$ is the ideal in $\mathcal{O}_V$ generated by the asymptotic multiplier ideal $\mathcal{J}(X, \|mD\|)$ (see \cite[Definition 11.1.2]{LazII}) and $D_{m,1}, \dots, D_{m,d}$ are general elements in $|mD|$. (See Section \ref{sect:asymptoticintersection} and Section \ref{sect:volume}).

If $D$ is big and $V$ does not lie in a closed subset of $X$ (which is precisely the augmented base locus of $D$, cf. \cite{ELMNP} for the definition) then we have  that $\vol_{X|V}(D)=\mu(V,D)=\|D^d \cdot V\|$ (see \cite[Theorem 2.13]{ELMNPrestricted} and \cite[Theorem 3.1]{Takayama}). In particular the equality $\vol_{X|V}(D)=\|D^d\cdot V\|$ can be read as a generalization of Fujita's approximation theorem, and it leads to several interesting consequences (see \cite[Corollary 2.14, Corollary 2.15, Corollary 2.16]{ELMNPrestricted}).  When the divisor $D$ is  effective, but not big, the lack of positivity seemed to be both a technicle obstacle and a possible source of pathologies. 

The second named author and Takayama have initiated, in \cite{PT}, the study of the notions of restricted, reduced volume and asymptotic intersection number for line bundles $L$ such that $0<\k(X,L)<\dim(X)$. Their main achievement is the proof that if $V$ contains a (very) general point then $\mu(V,L)>0\Leftrightarrow \vol_{X|V}(L)>0$. These two positivities are in turn equivalent to requiring that 
\begin{align}  \label{eqn:iitaka}
& \text{the Iitaka fibration $f$ associated to $L$ and restricted to $V$ gives a generically finite map} \\
& \text{$f_{|V}: V \dashrightarrow f(V)$}  \nonumber
\end{align}
(see \cite[Theorem 1.1, Corollary 1.2]{PT}). For such subvarieties the rate of growth of the dimensions of the restricted linear series is maximal and we are therefore in a situation similar to the big case.  


The purpose of this paper is 
to show that indeed restricted volumes of effective divisors along subvarieties verifying condition (\ref{eqn:iitaka}) (which turns out to be an optimal condition) share the same nice properties enjoyed by restricted volumes of big divisors. Moreover, under the same condition, the relationships among the different variants are well understood, modulo a property, which holds for instance for finitely generated divisors on arbitrary varieties (hence for the canonical divisor) or in low dimension. 

%
\subsection{The contents of the paper}
%
In the first part of the present paper we concentrate on the volume function of graded linear series associated to effective divisors. We study its properties and its relationship with the asymptotic intersection number. Lazarsfeld and Must\u a\c ta \cite{LM} and, independently, Kaveh and Khovanskii \cite{KK} developed a powerful approach, initiated by Okounkov \cite{O1,O2}, to study volumes of divisors and, more generally, linear series via convex geometry, by associating to them a convex body, called the {\it Okounkov body}. Their similar viewpoints lead to some overlaps as well as to several differences. 
On the one hand \cite{KK} deals more generally with linear series associated to effective divisors, while in \cite{LM} the theory is worked out for big divisors (or for linear series inducing a birational map onto the image). On the other hand, the introduction of a {\it global} Okounkov body, done in \cite{LM}, renders particularly transparent many of the properties of the volume function  and allows to study the way it varies in a family. 
In  the present paper we show that for complex projective varieties the approach adopted in 
\cite{LM} works as well to deal with linear series associated to effective divisors (so that one can recover some of the main results of \cite{KK}) and, at the same time, it still allows, in this larger framework, to understand how the volume function behave when we let the effective divisors and the linear series vary.
 
Then we turn to the relationship between the reduced volume and the asymptotic intersection number.  We underline the importance of a property in such a study, namely the boundedness of the difference between the base and the asymptotic multiplier ideals of a graded linear series. We exhibit two classes  of linear series for which the property holds and show how the property implies the equality between the reduced volume and the asymptotic intersection number. We wonder whether the boundedness property holds for any big graded linear series.

Putting together the results of the two parts we obtain, in particular, a complete understanding of the relationships between the three types of volumes along subvarieties for the canonical divisor of a smooth complex projective variety. 

%
\subsection{The statements of the results}
%

From now on all varieties are assumed to be complex, reduced and irreducible.

Recall that a graded linear series $W_\bullet$ associated to a Cartier divisor $D$  
(or line bundle) on a variety $V$ is the datum, for every $m\in\N$, of a subspace $W_m\subseteq H^0(V,mD)$ such that $W_0 = \mathbb{C}$ and $W_k\cdot W_l\subseteq W_{k+l}$. If $d=\dim(V)$ the volume of $W_\bullet$ is 
$$
 \vol_V(W_\bullet):= \limsup_{m \rightarrow + \infty} \frac{\dim  (W_m)}{m^{d}/d!}.
$$

The guiding example is provided by graded linear series obtained by restricting sections, i.e. $W_m:=H^0(X|V,mD)$.

To any $W_\bullet$ as before, given an admissible flag (see \S \ref{subsect:okounkov}), we can attach its Okounkov body, i.e., a closed convex set $\Delta(W_\bullet) \subseteq \mathbb{R}^d$. If $V$ is projective, $\Delta(W_\bullet)$ is also compact.


\begin{theorem} \label{thm:volumeA}
Let $V$ be a projective variety of dimension $d$ and let $W_{\bullet}$ be a graded linear series associated to a line bundle $L$. Then there exists an admissible flag $F_{\bullet}$ such that $$\vol_{\mathbb{R}^d}(\Delta(W_{\bullet}))=\frac{1}{d!} \vol(W_{\bullet}).$$
\end{theorem}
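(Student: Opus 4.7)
The plan is to adapt the strategy of Lazarsfeld--Musta\c t\u a \cite{LM}, originally developed for big divisors, to arbitrary graded linear series on projective varieties, thereby recovering some results of Kaveh--Khovanskii \cite{KK}. The key ingredient is a Khovanskii-type asymptotic for graded semigroups, combined with a careful choice of admissible flag which ensures that the sublattice of $\Z^{d+1}$ generated by the valuation semigroup is saturated.

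Concretely, I would first fix an admissible flag $F_\bullet:V=F_0\supset F_1\supset\cdots\supset F_d$ with $F_d$ a very general point of $V$, avoiding a countable union of proper closed subsets determined by $W_\bullet$ (and, when $L$ is big, also the augmented base locus of $L$). With such a choice the valuation $\nu=\nu_{F_\bullet}:H^0(V,mL)\setminus\{0\}\to\Z^d$ takes distinct values on linearly independent sections, hence the graded semigroup
$$
\Gamma(W_\bullet):=\{(m,\nu(s)):s\in W_m\setminus\{0\},\,m\geq 0\}\subset\N\times\Z^d
$$
satisfies $|\Gamma(W_\bullet)\cap(\{m\}\times\Z^d)|=\dim W_m$, and the Okounkov body $\Delta(W_\bullet)$ is the slice at height $1$ of the closed convex cone it generates in $\R^{d+1}$. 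I would then invoke the semigroup asymptotic of Khovanskii type (as in \cite[Proposition 2.1]{LM}, extended to arbitrary graded semigroups as in \cite{KK}): when the cone generated by $\Gamma(W_\bullet)$ has maximal dimension $d+1$ in $\R^{d+1}$,
$$
\lim_{m\to\infty}\frac{|\Gamma(W_\bullet)\cap(\{m\}\times\Z^d)|}{m^d}=\frac{d!\,\vol_{\R^d}(\Delta(W_\bullet))}{[\Z^{d+1}:L(\Gamma(W_\bullet))]},
$$
where $L(\Gamma(W_\bullet))\subset\Z^{d+1}$ is the subgroup generated by $\Gamma(W_\bullet)-\Gamma(W_\bullet)$.

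If $\vol(W_\bullet)=0$, then $\dim W_m=o(m^d)$; were the cone generated by $\Gamma(W_\bullet)$ full-dimensional, the asymptotic above would give a positive limit, contradicting this. Hence the cone is not full-dimensional, $\vol_{\R^d}(\Delta(W_\bullet))=0$, and the identity holds trivially. I may thus assume $\vol(W_\bullet)>0$, in which case the cone has maximal dimension and the main obstacle becomes the lattice-saturation statement $L(\Gamma(W_\bullet))=\Z^{d+1}$. This replaces condition (C) of \cite{LM} (birationality of the maps $\phi_m$), which may fail for general linear series of positive volume, and must be established here by hand. The strategy is to exploit positivity: since $\dim W_m\geq c\,m^d$ for large $m$ and some $c>0$, a generic choice of $F_d$ produces sufficiently many sections of $W_m$ with pairwise distinct valuations whose differences generate $\Z^d$ as a subgroup; coupling this with the existence of nonzero sections in $W_m$ and $W_{m+1}$ for all large $m$ (a consequence of $\vol(W_\bullet)>0$, via the Fujita-type approximation for graded linear series proved later in the paper) supplies the missing $(1,\mathbf{0})$ generator, so that $L(\Gamma(W_\bullet))=\Z^{d+1}$. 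Plugging this back into the semigroup asymptotic yields $\vol(W_\bullet)=d!\,\vol_{\R^d}(\Delta(W_\bullet))$.
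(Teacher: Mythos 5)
Your overall strategy — Khovanskii-type asymptotic plus a well-chosen flag — matches the paper's, and your treatment of the $\vol(W_\bullet)=0$ case via the Kaveh--Khovanskii asymptotic (cone not full-dimensional $\Rightarrow$ $\vol_{\R^d}(\Delta(W_\bullet))=0$) is a valid and somewhat slicker alternative to the paper's more geometric construction of a flag with $F_1$ dominating all the images $\phi_{tm}(V)$. However, the $\vol(W_\bullet)>0$ case, which is the heart of the matter, has two genuine gaps.

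First, the claim that $\vol(W_\bullet)>0$ forces $W_m\not=0$ and $W_{m+1}\not=0$ for all large $m$ is simply false. Take $D$ big on $V$ and set $W_m:=H^0(V,mD)$ for $m$ even, $W_m:=0$ for $m$ odd; this is a graded linear series with $\vol(W_\bullet)=\vol(D)>0$ but exponent $e(W_\bullet)=2$, and no ``$(1,\mathbf 0)$ generator'' is ever produced. No Fujita-type approximation can change this (and citing it here is also dangerously close to circular, since that theorem in the paper is deduced from the $(\mathrm{GF})$ version of the volume formula). The paper avoids the problem by passing to the rescaled series $W_{e,\bullet}$, which does have exponent $1$ and satisfies $(\mathrm{GF})$ by Lemma~\ref{lem:necessary}, and then transporting the result back to $W_\bullet$ via the two homogeneity lemmas $\Delta(W_{h,\bullet})=h\,\Delta(W_\bullet)$ and $\vol(W_{h,\bullet})=h^d\vol(W_\bullet)$. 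You need an analogous reduction.

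Second, the assertion that ``a generic choice of $F_d$ produces sufficiently many sections of $W_m$ with pairwise distinct valuations whose differences generate $\Z^d$'' does not follow from $\dim W_m\ge cm^d$. A proper sublattice of $\Z^d$ of index $k$ still contains on the order of $m^d/k$ lattice points inside a box of side $O(m)$, so mere abundance of sections cannot force lattice saturation. What actually produces generators of $\Z^d$ is the geometric input used in Lemma~\ref{lem:condGF}: $\vol(W_\bullet)>0$ gives generic finiteness of $\phi_{|W_m|}$, one resolves the map, and one pulls back hypersurfaces in $\P(W'_{p\ell})$ through the generically finite morphism to manufacture explicit sections $t_0,\ldots,t_d$ with $\nu(t_0)=0$ and $\nu(t_i)=e_i$; combining these with a nonzero section of $W_q$ for a suitable coprime $q$ yields generation of $\Z^{d+1}$. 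Your sketch gestures at this but does not supply the argument, and as written it would equally well ``prove'' generation in cases where it fails.
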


 \begin{theorem} [Fujita's approximation] \label{thm:fujita}
Let $X$ be a smooth projective variety and let $L \in \Pic(X)$.  Let $V \subseteq X$ be a $d$-dimensional subvariety. Suppose that $\vol_{X|V}(L)>0$. Then for every $\epsilon > 0$ there exists a birational morphism $\mu_\epsilon: X_\epsilon \rightarrow X$ of smooth varieties that is an isomorphism over the generic point of $V$, and a decomposition  $\mu_\epsilon^*(L)=A_\epsilon+E_\epsilon$ such that 
\begin{enumerate}[(i)]
\item \label{tesi1} $E_{\epsilon}$ is an effective $\mathbb{Q}$-divisor such that $\Supp(E_\epsilon)$ does not contain the strict transform $V_\epsilon$ of $V$; 
\item \label{tesi2} $A_\epsilon$ is a $\mathbb{Q}$-divisor that has an integral multiple that is base point free;

\item \label{tesi3}
 $$ \vol_{X|V}(L)\geq \vol_{X_\epsilon|V_\epsilon}(A_{\epsilon}) \geq \vol_{X|V}(L) - \epsilon',$$ where $\epsilon' = d! \cdot \epsilon$.\\ Equivalently, setting $\delta= \min_m(\deg({\phi_{|mL|}}_{|V}: V \dashrightarrow \phi(V)))$, $$ \delta \vol_{X|V}(L)\geq (A_{\epsilon})^d\cdot V_\epsilon \geq \delta \vol_{X|V}(L) - \overline{\epsilon},$$ where $\overline{\epsilon}=d! \cdot \delta \epsilon$.
  \end{enumerate}
 Moreover 
 \begin{enumerate}
 \item[(iv)]
 $\delta \vol_{X|V}(L) = \|L^d \cdot V \|$.
  \end{enumerate}

  \end{theorem}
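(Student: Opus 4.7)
The plan is to follow the classical Fujita-approximation blueprint in the restricted setting, replacing the big-case Okounkov body of Lazarsfeld--Musta\c t\u a with its effective-divisor analogue provided by Theorem \ref{thm:volumeA}. The assumption $\vol_{X|V}(L)>0$ ensures that the restricted graded linear series $W_\bullet$ with $W_m:=H^0(X|V,mL)$ grows at the maximal rate $d=\dim V$; in particular $V\not\subseteq\mathrm{Bs}(|m_0L|)$ and, by \cite{PT} together with the eventual stability of the map induced by $|mL|$, the restriction $\phi_{|m_0L|}|_V$ is generically finite onto its image of degree $\delta$ for all $m_0\gg 0$.

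\textbf{Sub-series approximation.} For a fixed $\epsilon>0$, I would apply Theorem \ref{thm:volumeA} to $W_\bullet$ to obtain an admissible flag satisfying $\vol_{\R^d}(\Delta(W_\bullet))=\frac{1}{d!}\vol_{X|V}(L)$. Following the \cite{LM} construction, introduce the sub-graded series
\[
W^{(m_0)}_{km_0}:=\Imm\bigl(\mathrm{Sym}^k W_{m_0}\to W_{km_0}\bigr),\qquad k\geq 0,
\]
with $W^{(m_0)}_m=0$ otherwise. The associated Okounkov bodies $\Delta(W^{(m_0)}_\bullet)$ form an increasing exhaustion of $\Delta(W_\bullet)$ as $m_0\to\infty$, so one can pick $m_0\gg 0$ with $\vol(W^{(m_0)}_\bullet)\geq\vol(W_\bullet)-\epsilon'$ and simultaneously $\deg(\phi_{|m_0L|}|_V)=\delta$.

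\textbf{Birational model and verification of (i)--(iii).} Let $\mu_\epsilon:X_\epsilon\to X$ be a log-resolution of the base ideal of $|m_0L|$ chosen to be an isomorphism over $X\setminus\mathrm{Bs}(|m_0L|)$, and in particular over the generic point of $V$. Decompose $\mu_\epsilon^*(m_0L)=M+F$ with $M$ base-point free and $F$ the fixed divisor, and set $A_\epsilon:=M/m_0$, $E_\epsilon:=F/m_0$. Conditions (i) and (ii) are then immediate. The upper bound $\vol_{X_\epsilon|V_\epsilon}(A_\epsilon)\leq\vol_{X|V}(L)$ follows because multiplication by the canonical section $s_F$ of $F$ and pushforward by $\mu_\epsilon$ embed $H^0(X_\epsilon|V_\epsilon,kM)$ into $H^0(X|V,km_0L)$ (injectivity uses $V_\epsilon\not\subseteq\supp F$). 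Conversely, every element of $W^{(m_0)}_{km_0}$ lifts to $X_\epsilon$ vanishing along $kF$, so dividing by $s_F^k$ yields $W^{(m_0)}_{km_0}\hookrightarrow H^0(X_\epsilon|V_\epsilon,kM)$; passing to volumes one concludes $\vol_{X_\epsilon|V_\epsilon}(A_\epsilon)\geq\vol(W^{(m_0)}_\bullet)\geq\vol_{X|V}(L)-\epsilon'$. The equivalent intersection-theoretic reformulation then follows from semi-ampleness of $M$: writing $M=f_\epsilon^*H$ with $f_\epsilon:X_\epsilon\to Y$ the morphism defined by $|M|$ and $H$ ample, Serre vanishing gives $\vol_{X_\epsilon|V_\epsilon}(M)=H^d\cdot f_\epsilon(V_\epsilon)$, while $M^d\cdot V_\epsilon=\deg(f_\epsilon|_{V_\epsilon})\cdot H^d\cdot f_\epsilon(V_\epsilon)=\delta\,\vol_{X_\epsilon|V_\epsilon}(M)$ by the choice of $m_0$, and rescaling by $m_0$ yields $A_\epsilon^d\cdot V_\epsilon=\delta\,\vol_{X_\epsilon|V_\epsilon}(A_\epsilon)$.

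\textbf{Part (iv) and main obstacle.} Letting $\epsilon\to 0$ in the second form of (iii) gives $A_\epsilon^d\cdot V_\epsilon\to\delta\vol_{X|V}(L)$. Since $M$ is base-point free, $d$ general elements of $|M|$ are precisely the proper transforms of $d$ general elements of $|m_0L|$ off $\mathrm{Bs}(|m_0L|)$; hence $M^d\cdot V_\epsilon$ equals the number of intersection points of $V$ with $d$ general members of $|m_0L|$ off the base locus, and the appropriate normalization and limit identify this quantity with $\|L^d\cdot V\|$. The principal technical hurdle is the Okounkov-body approximation $\vol(W^{(m_0)}_\bullet)\to\vol(W_\bullet)$ in the effective (possibly non-big) regime: this does not follow from the Lazarsfeld--Musta\c t\u a big-case argument directly, but requires the effective-divisor variant of their global Okounkov body machinery developed in the first half of the paper; once this ingredient is available, the remainder of the proof follows the blueprint of \cite{LM,ELMNPrestricted}.
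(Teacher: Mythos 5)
Your proposal follows essentially the same route as the paper: reduce via the graded-series Fujita approximation (Theorem~\ref{thm:fujitaseries}, which is the rigorous form of your informal ``increasing exhaustion of Okounkov bodies'' claim -- note the bodies $\Delta(W^{(m_0)}_\bullet)$ are in general not nested, only eventually $\epsilon$-close) to a log-resolution of $|m_0L|$, compare restricted volumes by multiplying/dividing by the canonical section of the fixed part, and convert to intersection numbers via semiampleness. The one imprecision worth flagging is in the intersection-theoretic step: you write $M=f_\epsilon^*H$ with $f_\epsilon$ the morphism defined by $|M|$, but the identity $\vol_{X_\epsilon|V_\epsilon}(M)=H^d\cdot f_\epsilon(V_\epsilon)$ needs $f_\epsilon$ to be an algebraic fiber space, so one should instead use the semiample fibration $\psi$ associated to $M$ (Lemma~\ref{lem:semiamplefib}); the paper then checks $\deg(\psi|_{V_\epsilon})=\delta$ by sandwiching $|kM_{p}|$ between $T_{k,p}$ and $|M_{pk}|$, both of which give maps of degree $\delta$ on $V$, a step your write-up elides.
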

 

\begin{corollary}[Log-concavity of the restricted volume]\label{cor:log-concavity}
Let $X$ be a smooth projective variety and let $L_1,L_2\in \Pic(X)$
be two line bundles of non-negative Iitaka dimension. Let $V$ be a $d$-dimensional subvariety of $X$. Then the following holds:
$$ \vol_{X|V}(L_1+L_2)^{1/d}\geq \vol_{X|V}(L_1)^{1/d} + \vol_{X|V}(L_2)^{1/d}.
$$
\end{corollary}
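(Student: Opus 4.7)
The approach is the standard Brunn--Minkowski proof of log-concavity via Okounkov bodies, adapted to the restricted setting through Theorem \ref{thm:volumeA}.

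First I would reduce to the case where both restricted volumes are strictly positive. If $\vol_{X|V}(L_1)=\vol_{X|V}(L_2)=0$ the inequality is trivial. If exactly one vanishes, say $\vol_{X|V}(L_2)=0$, the hypothesis $\kod(X,L_2)\ge 0$ provides an effective divisor $E\in|m_0L_2|$ for some $m_0>0$; multiplication by a section $s_2$ of $m_0L_2$ whose divisor does not contain $V$ (which exists unless $V$ is in the stable base locus, a case handled by first passing to a birational model that isolates $V$) produces an injection $H^0(X|V,mL_1)\hookrightarrow H^0(X|V,m(L_1+L_2))$ for $m$ divisible by $m_0$, hence $\vol_{X|V}(L_1+L_2)\ge\vol_{X|V}(L_1)$, which gives the inequality.

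For the main case, set $W^i_m:=H^0(X|V,mL_i)$ for $i=1,2$ and $W^{12}_m:=H^0(X|V,m(L_1+L_2))$. These are graded linear series on $V$, and multiplication of sections on $X$ followed by restriction to $V$ yields the crucial inclusion $W^1_m\cdot W^2_m\subseteq W^{12}_m$. By Theorem \ref{thm:volumeA} applied simultaneously to the three graded series, I would choose an admissible flag $F_\bullet$ on $V$ realising
$$\vol(W^i_\bullet)=d!\,\vol_{\R^d}(\Delta(W^i_\bullet))\ (i=1,2),\qquad \vol(W^{12}_\bullet)=d!\,\vol_{\R^d}(\Delta(W^{12}_\bullet)).$$
Such a common flag exists because, in the construction of Theorem \ref{thm:volumeA} (following \cite{LM} and \cite{KK}), the set of admissible flags validating the equality for a given graded linear series is generic, and finitely many generic conditions can be intersected. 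The valuation $\nu_{F_\bullet}$ defining the Okounkov body is additive on products of sections, so the inclusion above yields $\nu_{F_\bullet}(W^1_m)+\nu_{F_\bullet}(W^2_m)\subseteq\nu_{F_\bullet}(W^{12}_m)$; rescaling by $1/m$ and passing to the limit gives the Minkowski containment
$$\Delta(W^1_\bullet)+\Delta(W^2_\bullet)\subseteq\Delta(W^{12}_\bullet).$$

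The classical Brunn--Minkowski inequality in $\R^d$ and another application of Theorem \ref{thm:volumeA} then give
\begin{align*}
\vol_{X|V}(L_1+L_2)^{1/d}
&=(d!)^{1/d}\vol_{\R^d}(\Delta(W^{12}_\bullet))^{1/d}\\
&\ge (d!)^{1/d}\bigl(\vol_{\R^d}(\Delta(W^1_\bullet))^{1/d}+\vol_{\R^d}(\Delta(W^2_\bullet))^{1/d}\bigr)\\
&=\vol_{X|V}(L_1)^{1/d}+\vol_{X|V}(L_2)^{1/d}.
\end{align*}

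The main obstacle I anticipate is not the Brunn--Minkowski step itself but the existence of a single admissible flag realising the equality of Theorem \ref{thm:volumeA} for all three graded series at once. This requires revisiting the proof of Theorem \ref{thm:volumeA} and checking that the set of ``good'' flags for a given graded linear series is large enough (generically chosen, for instance via general hyperplane sections) that a finite intersection remains nonempty. A secondary technical point is the careful treatment of the degenerate case when $V$ lies in the stable base locus of one of the $L_i$, which I handle by reducing to the trivial inequality via monotonicity under multiplication by effective sections on a suitable modification.
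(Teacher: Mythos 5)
Your argument in the main case (both restricted volumes positive) is correct but takes a genuinely different route from the paper. The paper reduces to the positive case and then mimics \cite[Theorem 11.4.9]{LazII}, replacing the classical Fujita approximation with Theorem~\ref{thm:fujita} and keeping track of the degrees $\delta_i$ of the semiample fibrations restricted to $V$ (the key observation being that the fibration of $A_1+A_2$ restricted to $V$ has degree at most $\min\{\delta_1,\delta_2\}$). You instead run the Okounkov-body/Brunn--Minkowski argument directly, which is precisely the route of \cite[Corollary 3.11]{KK}. The common-flag problem you identify as the main obstacle is already settled by Remark~\ref{rmk:countable}: once both restricted volumes are positive, each of $W^1_\bullet$, $W^2_\bullet$, $W^{12}_\bullet$ (after passing to suitable multiples) satisfies condition (GF), so a single flag realizing Theorem~\ref{thm:volume} for all three exists, and the valuation axioms for $\nu_{F_\bullet}$ give $\Delta(W^1_\bullet)+\Delta(W^2_\bullet)\subseteq\Delta(W^{12}_\bullet)$. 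That part of the proposal is sound and is a perfectly good alternative proof.

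The genuine gap is in the degenerate case. If $\vol_{X|V}(L_2)=0$ because $V\subseteq\mathbb{B}(L_2)$, then every section of $m_0L_2$ vanishes on $V$, so multiplying $H^0(X|V,mL_1)$ by such a section maps everything to $0$ in $H^0(X|V,m(L_1+L_2))$ and yields no injection; and ``passing to a birational model that isolates $V$'' cannot help, since restricted volumes are unchanged under modifications that are isomorphisms near the generic point of $V$. Worse, the would-be monotonicity $\vol_{X|V}(L_1+L_2)\geq\vol_{X|V}(L_1)$ actually fails in this regime: take $X=\mathrm{Bl}_p\mathbb{P}^2$, $V=C$ the exceptional curve, $L_1=2H-C$ (ample, $L_1\cdot C=1$) and $L_2=\mathcal{O}_X(C)$ (so $\kod(X,L_2)=0$); then $\vol_{X|C}(L_1)=1$ and $\vol_{X|C}(L_2)=0$, while $L_1+L_2=2H$ restricts trivially to $C$ and $\vol_{X|C}(2H)=0$, so the claimed inequality reads $0\geq 1$. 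The corollary therefore needs a hypothesis excluding this, such as $V\not\subseteq\mathbb{B}(L_i)$ for $i=1,2$. The paper's proof makes the same reduction without comment, so you are in good company, but your write-up should state it as a standing assumption rather than claim the degenerate case is disposed of by a modification argument.
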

Theorem \ref{thm:volumeA} and Corollary \ref{cor:log-concavity} correspond, respectively, to \cite[Corollary 3.11]{KK}, item 1) and 3). 
Theorem \ref{thm:fujita} is proved using Theorem \ref{thm:fujitaseries}, the latter corresponding to \cite[Corollary 3.11]{KK}, item 2).

From Theorem \ref{thm:fujita}, item (iv),
we can also deduce the following corollary:

\begin{corollary}\label{cor:caratt}
Let $X$ be a smooth projective variety and let $L$ be a line bundle. Let $x \in X$ be a general point. Fix $d$ between $1$ and $\dim(X)-1$. Assume that $\mu(V,L)=\vol_{X|V}(L)$ for every $d$-dimensional subvariety passing through $x$. Then either $L$ is big or $\kod(X,L) < d$.
\end{corollary}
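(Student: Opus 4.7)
The plan is to prove the contrapositive: assume $L$ is not big and $k:=\kappa(X,L)\geq d$, and exhibit a $d$-dimensional subvariety $V$ through the general point $x$ with $\mu(V,L)>\vol_{X|V}(L)$. Since $L$ is not big, the Iitaka fibration $\phi:X'\to Y$ on a suitable birational model has $\dim Y=k\leq n-1$ (where $n=\dim X$), so a general Iitaka fiber has positive dimension. For $x$ general, pick $x'\neq x$ in the Iitaka fiber through $x$, and take $V$ to be a sufficiently general complete intersection of $n-d$ very ample divisors passing through both $x$ and $x'$. By Bertini, $V$ is smooth irreducible of dimension $d$; since $k\geq d$, the restriction $\phi|_V$ is generically finite onto a $d$-dimensional image; and since $x,x'\in V$ are two distinct preimages under $\phi|_V$ of the general point $\phi(x)=\phi(x')\in\phi(V)$, we have $\deg(\phi|_V:V\dashrightarrow\phi(V))\geq 2$. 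As $\phi_{|mL|}$ is birational to $\phi$ for $m$ large and divisible, this forces $\delta:=\delta_V=\min_m\deg(\phi_{|mL|}|_V)\geq 2$.

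Applying Theorem \ref{thm:fujita}(iv) then yields
$$\|L^d\cdot V\|=\delta\,\vol_{X|V}(L)\geq 2\,\vol_{X|V}(L)>0,$$
the positivity coming from \cite{PT} since $\phi|_V$ is generically finite. It thus suffices to establish the inequality $\mu(V,L)\geq\|L^d\cdot V\|$, from which $\mu(V,L)\geq 2\vol_{X|V}(L)>\vol_{X|V}(L)$ contradicts the hypothesis. To obtain this bound I would reduce to a ``nice'' semi-ample situation through Fujita's approximation (Theorem \ref{thm:fujita}): on the approximating model $\mu_\epsilon:X_\epsilon\to X$ the divisor $A_\epsilon$ is semi-ample, so its asymptotic multiplier ideal $\mathcal{J}(X_\epsilon,\|mA_\epsilon\|)$ is trivial for $m$ sufficiently divisible, whence
$$\mu(V_\epsilon,A_\epsilon)=\vol_{V_\epsilon}(A_\epsilon|_{V_\epsilon})=\delta\,\vol_{X_\epsilon|V_\epsilon}(A_\epsilon)$$
by the Iitaka-fibre count underlying (iv); pulling back through $\mu_\epsilon$ (an isomorphism over the generic point of $V$) and letting $\epsilon\to 0$ via Theorem \ref{thm:fujita}(iii) transfers this to $\mu(V,L)\geq\delta\,\vol_{X|V}(L)$.

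The main technical obstacle is precisely this last transfer: one must control how the asymptotic multiplier ideal $\mathcal{J}(X,\|mL\|)$ behaves under the birational modification $\mu_\epsilon$, and how addition of the effective part $E_\epsilon$ alters $\mathcal{J}(X_\epsilon,\|m(A_\epsilon+E_\epsilon)\|)$ relative to $\mathcal{J}(X_\epsilon,\|mA_\epsilon\|)$. This is exactly the sort of boundedness property for asymptotic multiplier ideals studied in the later sections of the paper, and the corollary fits naturally as an application of that machinery.
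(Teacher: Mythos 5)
Your overall strategy is exactly the paper's: argue by contrapositive, assume $d\le\kappa(X,L)<\dim X$, construct a $d$-dimensional $V$ through the general point $x$ on which the Iitaka map has degree $\delta\ge 2$, invoke Theorem \ref{thm:fujita}(iv) to get $\|L^d\cdot V\|=\delta\,\vol_{X|V}(L)>\vol_{X|V}(L)$, and then contradict the hypothesis once you know $\mu(V,L)\ge\|L^d\cdot V\|$. Your construction of $V$ (a generic complete intersection through two points $x,x'$ in the same Iitaka fiber, forcing $\delta\ge 2$) is valid and slightly more explicit than the paper's.

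The problem is the last step, which you correctly flag as incomplete. You try to derive $\mu(V,L)\ge\|L^d\cdot V\|$ by passing to the semi-ample approximation $A_\epsilon$ on $X_\epsilon$, computing $\mu$ there (where multiplier ideals are trivial), and then ``transferring back.'' This transfer is not a minor technicality: relating $\mathcal{J}(X_\epsilon,\|m\mu_\epsilon^*L\|)$ to $\mathcal{J}(X_\epsilon,\|mA_\epsilon\|)$, or controlling the effect of subtracting the effective part $E_\epsilon$, is exactly the content of property b-$(\star)$. If this argument closed, it would yield the full equality $\mu(V,L)=\|L^d\cdot V\|$, which is Theorem \ref{thm:mu} and requires the b-$(\star)$ hypothesis; it is not known to hold for an arbitrary $L$ (Question \ref{q:star}). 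So as written the proposal would only prove the corollary under an extra hypothesis on $L$, defeating the point.

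What you are missing is that the needed inequality holds unconditionally, for any $L$ and any $V\not\subseteq\mathbb{B}(L)$: it is the paper's Lemma \ref{lem:ineq}, i.e.\ \cite[Lemma~3.5]{PT}, already quoted and available for citation. No Fujita approximation, no multiplier-ideal comparison, and no b-$(\star)$ are needed. Replacing your entire final paragraph by a single appeal to that lemma gives the paper's proof and closes the gap.
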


Then we move forward to study how the Okounkov bodies of graded linear series associated to effective divisors vary in family. The set-up is the following. Let $V$ be a variety of dimension $d$ and let $D_1,\ldots, D_r$ be (effective) Cartier divisors on $V$. For any 
$\vec{m}=(m_1,\ldots,m_r)\in \N^r$ we write 
$$
 \vec m D:=m_1D_1+\ldots +m_rD_r.
$$
\begin{definition}
A multi-graded linear series $W_{\vec \bullet}$ on $V$ associated to the divisors $D_1,\ldots,D_r$ is the data of finite-dimensional vector spaces
$$
 W_{\vec k}\subseteq H^0(V,\vec k D)
$$
for every $\vec k\in \N^r$, with $W_{\vec 0}=\C$, such that
$$
 W_{\vec k}\cdot W_{\vec m}\subseteq W_{\vec k+\vec m}. 
$$
 \hfill $\square$ \end{definition} 
Notice that to a multi-graded linear series $W_{\vec\bullet}$ and  integral vector $\vec a\in \N^r$ we can associate a single graded linear series $W_{\vec a, \bullet}$ 
as follows:
$
W_{\vec a, k}:=W_{k\vec a},\ \forall k\in\N.
$
We say that $W_{\vec\bullet}$ satisfies condition (GF') essentially if  $W_{k\vec a} \not = 0$ for all $k \gg 1$ and if one  
of the induced single graded linear series $W_{\vec a, \bullet}$ determines a generically finite map onto the image (see Definitions \ref{def:GF'} and \ref{def:condGF} for the details).

As in the big case \cite[Theorem B]{LM}, the following result explains geometrically  the regularity properties of the volume function in our more general framework.

\begin{theorem}\label{thm:infamiglia}
Let $V$ be a $d$-dimensional projective variety and $W_{\vec\bullet}$ as above. Suppose $W_{\vec\bullet}$ satisfies condition (GF'). Let $F_\bullet$ be an admissible flag as in Lemma \ref{lem:2}. There exists a closed convex cone depending on $F_\bullet$ 
$$
 \Delta(W_{\vec\bullet}) \subseteq \R^d\times \R^r
$$
such that
for all $\vec a\in {\mathrm{int}}(\supp(W_{\vec \bullet}))$, the fiber $(pr_2)_{|\Delta(W_{\vec \bullet})}^{-1}(\vec a)$ of $\Delta (W_\bullet)$ over $\vec a$ is the Okounkov body $\Delta(W_{\vec a, \bullet})$ of $W_{\vec a, \bullet}$.
\end{theorem}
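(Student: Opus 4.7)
The plan is to follow closely the strategy of \cite[Theorem B, \S~4]{LM}, substituting their bigness hypothesis with condition (GF'). With $F_\bullet$ the admissible flag provided by Lemma \ref{lem:2}, each nonzero section $s\in W_{\vec k}\setminus\{0\}$ has its standard flag valuation $\nu(s)\in\N^d$. I would assemble the graded semigroup
$$
 \Gamma(W_{\vec\bullet}):=\bigl\{(\nu(s),\vec k)\,:\,\vec k\in \N^r,\ s\in W_{\vec k}\setminus\{0\}\bigr\}\subseteq \N^d\times \N^r,
$$
and define $\Delta(W_{\vec\bullet})$ to be the closed convex cone it generates in $\R^d\times\R^r$. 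By construction this is a closed convex cone; moreover, for every $\vec a\in \N^r$ the semigroup $\Gamma(W_{\vec a,\bullet})$ of the single-graded slice embeds into $\Gamma(W_{\vec\bullet})\cap (pr_2)^{-1}(\N\vec a)$, so the Okounkov body $\Delta(W_{\vec a,\bullet})$ is immediately contained in the fiber of $\Delta(W_{\vec\bullet})$ over $\vec a$.

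For the reverse containment when $\vec a\in\mathrm{int}(\supp(W_{\vec\bullet}))$, I would invoke the Khovanskii-type theorem on asymptotic behaviour of graded semigroups used in \cite{LM,KK}: a graded semigroup $\Gamma\subseteq \N^d\times \N$ that generates a subgroup of maximal rank and whose associated cone has nonempty interior has the property that its rescaled slices $\tfrac{1}{k}\Gamma_k$ converge in Hausdorff distance to the corresponding slice of the cone. Condition (GF') is exactly what guarantees the applicability of this theorem. Indeed, for $\vec a$ in the interior of $\supp(W_{\vec\bullet})$, Definitions \ref{def:GF'} and \ref{def:condGF} force the single graded series $W_{\vec a,\bullet}$ to satisfy condition (GF); by Theorem \ref{thm:volumeA} and the construction entering its proof the Okounkov body $\Delta(W_{\vec a,\bullet})$ is then $d$-dimensional and the corresponding difference lattice in $\Z^d$ has rank $d$, giving both full-dimensionality of the fiber and the lattice condition required by Khovanskii. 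Applying the theorem to $\Gamma(W_{\vec\bullet})$ along the ray $\N\vec a$, and separately to $\Gamma(W_{\vec a,\bullet})$, the two convex limits coincide, proving the desired equality.

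The main obstacle, exactly as in \cite[Proposition~4.1]{LM}, lies in the density step underlying Khovanskii's theorem. One must approximate any rational interior point $(\vec x,\vec a)$ of $\Delta(W_{\vec\bullet})$ by rescaled semigroup elements $\tfrac{1}{k}(\nu(s_k),k\vec a)$, which requires writing $(\vec x,\vec a)$ as a positive rational combination of finitely many elements of $\Gamma(W_{\vec\bullet})$ sitting in nearby slices $W_{\vec b_i}$ and then manufacturing, via the multiplicative structure $W_{\vec b}\cdot W_{\vec c}\subseteq W_{\vec b+\vec c}$, a single section in $W_{k\vec a}$ whose valuation is sufficiently close to $k\vec x$. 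In the big case bigness automatically supplies this abundance of sections; here condition (GF') substitutes for bigness precisely by granting the lattice rank and dimension bounds just mentioned. Once this density step is secured, the remaining assertions --- closedness and convexity of $\Delta(W_{\vec\bullet})$, together with the fiber description for every interior $\vec a$ --- follow by the formal convex-geometric manipulations of \cite{LM} with only notational changes.
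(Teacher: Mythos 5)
Your proposal is correct and follows essentially the same route as the paper. Two remarks that would tighten it up. First, the ``density step'' you identify as the main obstacle is already available off the shelf: \cite[Proposition 4.9]{LM} is a purely combinatorial statement about a sub-semigroup $\Gamma\subseteq \N^{d+r}$ that generates $\Z^{d+r}$ as a group, and it makes no reference to bigness, so it applies verbatim in the present setting; condition (GF') only enters through Lemma \ref{lem:2}, which supplies exactly the hypothesis that $\Gamma(W_{\vec\bullet})$ generate $\Z^{d+r}$. You do not need to re-manufacture sections via the multiplicative structure; that work is already done inside \cite[Proposition 4.9]{LM}. Second, a point of precision: the relevant hypothesis for that proposition is the global one, namely that $\Gamma(W_{\vec\bullet})$ generate $\Z^{d+r}$, not (as your sketch suggests) that each slice $\Gamma(W_{\vec a,\bullet})$ have a rank-$d$ difference lattice in $\Z^d$; the latter does not by itself yield the former, and it is Lemma \ref{lem:2} (which chooses the flag simultaneously for suitably many $\vec a_i$ spanning $\Z^r$) that closes this gap. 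With these two points in place, the proof reduces to the observation that $\Gamma(W_{\vec\bullet})\cap(\N^d\times\N\vec a)$ is \emph{equal} (not merely containing) to $\Gamma(W_{\vec a,\bullet})$ by definition of the single-graded slice, after which \cite[Proposition 4.9]{LM} gives the theorem in one line.
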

From the theorem above one immediately deduces the following.
\begin{corollary}\label{cor:cont}
Let $V$ be a $d$-dimensional projective variety and $W_{\vec\bullet}$ as above.
Suppose $W_{\vec\bullet}$ satisfies condition (GF'). The function
$\vec a\mapsto \vol(W_{\vec a,\bullet})$ extends uniquely to a continuous function 
$$
 \vol_{W_{\vec \bullet}}: {\mathrm{int}} \big(\supp(W_{\vec \bullet}) \big)\to \R
$$
which is homogeneous of degree $d$, and the resulting function is log-concave.
\end{corollary}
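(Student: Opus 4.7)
The strategy is to transport all three properties to convex geometry via the global Okounkov cone $\Delta(W_{\vec\bullet})\subseteq \R^d\times \R^r$ furnished by Theorem \ref{thm:infamiglia}. Writing $C_{\vec a}:=(pr_2)^{-1}(\vec a)\cap \Delta(W_{\vec\bullet})$ for the fiber over $\vec a$, I would set
\[
 \widetilde\vol(\vec a):=d!\cdot \vol_{\R^d}(C_{\vec a}),\qquad \vec a\in\mathrm{int}(\supp(W_{\vec\bullet})),
\]
and then show that $\widetilde\vol$ is continuous, $d$-homogeneous and log-concave, and that it agrees with $\vec a\mapsto \vol(W_{\vec a,\bullet})$ on integer points. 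Agreement is provided by Theorem \ref{thm:infamiglia}, which identifies $C_{\vec a}$ with the Okounkov body $\Delta(W_{\vec a,\bullet})$ for integral $\vec a$ in the interior, combined with Theorem \ref{thm:volumeA}, yielding $d!\cdot \vol_{\R^d}(\Delta(W_{\vec a,\bullet}))=\vol(W_{\vec a,\bullet})$ for the admissible flag chosen in Theorem \ref{thm:infamiglia}. Uniqueness of a continuous, $d$-homogeneous extension then follows from density of the rational points of $\mathrm{int}(\supp(W_{\vec\bullet}))$, since homogeneity determines the values on $\Q^r$ from those on $\N^r$.

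Homogeneity of degree $d$ is immediate from the cone structure of $\Delta(W_{\vec\bullet})$: one has $C_{\lambda\vec a}=\lambda\cdot C_{\vec a}$ for every $\lambda>0$, hence $\vol_{\R^d}(C_{\lambda\vec a})=\lambda^d\vol_{\R^d}(C_{\vec a})$. Log-concavity is a manifestation of Brunn--Minkowski. Since $\Delta(W_{\vec\bullet})$ is a convex cone it is closed under addition, hence
\[
 C_{\vec a+\vec b}\;\supseteq\;C_{\vec a}+C_{\vec b}
\]
for all $\vec a,\vec b\in\mathrm{int}(\supp(W_{\vec\bullet}))$. Applying Brunn--Minkowski to the convex compact fibers gives $\widetilde\vol(\vec a+\vec b)^{1/d}\geq \widetilde\vol(\vec a)^{1/d}+\widetilde\vol(\vec b)^{1/d}$, which together with $d$-homogeneity and AM--GM is equivalent to concavity of $\log\widetilde\vol$.

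The step I expect to be the main obstacle is continuity of $\widetilde\vol$, which is what makes the extension sensible. I would deduce it from the standard convex-geometric fact that for a closed convex set in Euclidean space the $d$-dimensional Lebesgue measure of the slices varies continuously on the relative interior of the projection, provided the slices stay locally uniformly bounded. Boundedness is not an issue here: each $C_{\vec a}$ with $\vec a\in\mathrm{int}(\supp(W_{\vec\bullet}))$ is an Okounkov body on a projective variety, hence compact by the general theory recalled in \S\ref{subsect:okounkov}, and the cone and convexity structures propagate this local boundedness to a whole neighbourhood of $\vec a$. The Hausdorff distance between neighbouring fibers then tends to zero, and continuity of Lebesgue measure along Hausdorff-convergent sequences of compact convex sets closes the argument.
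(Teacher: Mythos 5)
Your proposal is correct and it fills in exactly the ``standard results from convex geometry'' that the paper (following the analogous corollary in Lazarsfeld--Must\u a\c ta) invokes without writing out: the identification of the fibers of the global cone with the Okounkov bodies $\Delta(W_{\vec a,\bullet})$, the cone structure $C_{\lambda\vec a}=\lambda C_{\vec a}$ for $d$-homogeneity, $C_{\vec a+\vec b}\supseteq C_{\vec a}+C_{\vec b}$ plus Brunn--Minkowski for log-concavity, and continuity of slice volumes of a closed convex set over the interior of its projection (with local uniform boundedness propagated by the cone/convexity structure from compactness of the fibers over integral points). One minor citation point: to conclude $d!\cdot\vol_{\R^d}(\Delta(W_{\vec a,\bullet}))=\vol(W_{\vec a,\bullet})$ simultaneously for all integral $\vec a\in{\mathrm{int}}(\supp(W_{\vec\bullet}))$ with the single flag furnished by Lemma \ref{lem:2}, one should appeal to the proof of Theorem \ref{thm:volume} (which applies to any flag satisfying the conclusion of Lemma \ref{lem:condGF}, arranged for all $\vec a$ at once via Remark \ref{rmk:countable}) rather than to the bare existential statement of Theorem \ref{thm:volumeA}.
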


Finally, using Theorems \ref{thm:fujita} and \ref{thm:infamiglia}, we are able to obtain (cf. Theorem \ref{thm:multigraded}) a generalized Fujita approximation for multi-graded linear series associated to effective divisors, generalizing the main result of \cite{Jow}. We refer the reader to \S \ref{ss:multigraded} for the relevant definitions and to Theorem \ref{thm:multigraded}  for  the precise statement. 

As anticipated in \S 1.2, in the second part of the paper we try to understand the relationship between the reduced volume and the asymptotic intersection number. 
The crucial property is the following. 

\begin{definition} 
Let $X$ be a smooth projective variety and $D$ a divisor of non-negative Iitaka dimension. We say that $D$ satisfies property $(\star)$ if there exists an effective divisor $N$ on $X$ such that, for any integer $p$ such that $h^0(X,pD)\not=0$, we have
\begin{equation} 
\cJ(X, \|pD\|)(-N)\subseteq \mathfrak{b}(|pD|),  \tag{$\star$}
\end{equation}
where $\cJ(X, \|pD\|)$ (resp. $\mathfrak{b}(|pD|)$) is the asymptotic multiplier ideal (resp. the base ideal) associated to $|pD|$ (see \cite[Definition 11.1.2]{LazII}).  
 \hfill $\square$ \end{definition}

The property ($\star$) can be enunciated also for (non-complete) graded linear series $W_\bullet$ (see \cite[Definition 1.1.24]{LazII} for the definition of asymptotic multiplier ideal of a graded linear series).

The birational version of the property above is called b-($\star$) (see Definition \ref{pinco}). 
If $D$ is a big divisor then it satisfies property (\ref{fondamentale}) (see \cite[Theorem 11.2.21]{LazII}). 
The importance of property b-$(\star)$ comes from the following result.

\begin{theorem}\label{thm:mu}
Let $X$ be a smooth projective variety and $L\in \Pic(X)$ be 
a line bundle verifying property b-($\star$). Then there exists a closed subset $\Lambda\subset X$ such that for every $d$-dimensional irreducible subvariety $V$ of $X$ such that $V\not\subseteq \Lambda$, we have: 

\begin{equation}\label{eq:mu}
\mu(V,L)=\|L^d\cdot V\|.
\end{equation}
\end{theorem}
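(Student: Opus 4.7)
The overall strategy is to exploit property~b-$(\star)$ to identify the reduced volume $\mu(V,L)$ with the \emph{base-ideal volume}
$$\beta(V,L):=\limsup_{m\to\infty}\frac{h^0\bigl(V,\mathfrak{b}(|mL|)_V\otimes\mathcal{O}_V(mL)\bigr)}{m^d/d!},$$
and then to show, via the Fujita-type approximation of Theorem~\ref{thm:fujita}, that $\beta(V,L)=\|L^d\cdot V\|$. By b-$(\star)$, there is a birational modification $\pi:X'\to X$ on which $(\star)$ holds for $\pi^*L$ with some effective divisor $N$. I define $\Lambda$ to be the union of the image $\pi(\exc(\pi))$, the image $\pi(\supp N)$, the augmented base locus of $L$, and the closed subset of Corollary~\ref{cor:caratt} ensuring that condition~(\ref{eqn:iitaka}) holds for every $V\not\subseteq\Lambda$. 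Standard birational invariance of $\mu$ and of $\|L^d\cdot V\|$ for $V\not\subseteq\Lambda$ allows me to reduce to the case where $(\star)$ holds directly on $X$.

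The chain of inclusions of subsheaves of $\mathcal{O}_X(mL)$
$$\cJ(\|mL\|)\otimes\mathcal{O}_X(mL-N)\hookrightarrow\mathfrak{b}(|mL|)\otimes\mathcal{O}_X(mL)\hookrightarrow\cJ(\|mL\|)\otimes\mathcal{O}_X(mL),$$
obtained from $(\star)$ together with the trivial $\mathfrak{b}(|mL|)\subseteq\cJ(\|mL\|)$, restricts to $V$ (justified since $V\not\subseteq\supp N$). The exact sequence $0\to\mathcal{O}_V(-N|_V)\to\mathcal{O}_V\to\mathcal{O}_{V\cap N}\to 0$, tensored with $\cJ(\|mL\|)_V\otimes\mathcal{O}_V(mL)$, shows that the $h^0$'s of the outer terms in the restricted chain differ by at most $h^0\bigl(V\cap N,\mathcal{O}_{V\cap N}(mL)\bigr)=O(m^{d-1})$, since $V\cap N$ has dimension $d-1$. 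Dividing by $m^d/d!$ and taking $\limsup$ I conclude $\mu(V,L)=\beta(V,L)$.

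For the identification $\beta(V,L)=\|L^d\cdot V\|$, I apply Theorem~\ref{thm:fujita}: for each $\epsilon>0$ there is a modification $\pi_\epsilon:X_\epsilon\to X$, isomorphic near the generic point of $V$, with $\pi_\epsilon^*L=A_\epsilon+E_\epsilon$, $A_\epsilon$ admitting a base-point-free multiple, $E_\epsilon$ effective and not containing $V_\epsilon$, and $A_\epsilon^d\cdot V_\epsilon\ge\|L^d\cdot V\|-\bar\epsilon$. Multiplication by a section cutting out $mE_\epsilon$ gives $\mathfrak{b}(|m\pi_\epsilon^*L|)\otimes\mathcal{O}_{X_\epsilon}(m\pi_\epsilon^*L)\supseteq\mathcal{O}_{X_\epsilon}(mA_\epsilon)$; combined with the bigness of $A_\epsilon|_{V_\epsilon}$ on $V_\epsilon$ (ensured by condition~(\ref{eqn:iitaka})), restriction to $V_\epsilon$ yields $\beta(V_\epsilon,\pi_\epsilon^*L)\ge A_\epsilon^d\cdot V_\epsilon$. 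The opposite inequality is obtained by resolving $|mL|$ on a suitable model $Y_m$, where $\mathfrak{b}(|mL|)\otimes\mathcal{O}_X(mL)$ becomes the line bundle of the base-point-free moving part $M_m$; its sections over the strict transform of $V$ are asymptotically bounded by the transverse intersection count $M_m^d\cdot V_{Y_m}$, which is the quantity appearing in the definition of $\|L^d\cdot V\|$. Birational invariance of $\beta$ for $V\not\subseteq\Lambda$ and letting $\epsilon\to 0$ then close the argument. The main obstacle is this last step, where the birational invariance of $\beta(V,L)$, the bigness of $A_\epsilon|_{V_\epsilon}$, and the precise matching of the two inequalities in the double limit $m\to\infty$, $\epsilon\to 0$ require careful technical verification.
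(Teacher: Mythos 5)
Your first step is sound: the sandwich
$$\cJ(\|mL\|)_V\otimes\mathcal{O}_V(mL-N|_V)\subseteq \mathfrak{b}(|mL|)_V\otimes\mathcal{O}_V(mL)\subseteq\cJ(\|mL\|)_V\otimes\mathcal{O}_V(mL),$$
together with the $O(m^{d-1})$ bound coming from the exact sequence of $V\cap N$, does give $\mu(V,L)=\beta(V,L)$ for $V\not\subseteq\supp N\cup \mathbb{B}(L)$. The genuine gap is in the second step, where you want $\beta(V,L)=\|L^d\cdot V\|$ by applying Theorem~\ref{thm:fujita}. That theorem has the hypothesis $\vol_{X|V}(L)>0$, and there is no closed $\Lambda\subsetneq X$ making this automatic for all $d$-dimensional $V\not\subseteq\Lambda$: if $\kod(X,L)<d$ then $\vol_{X|V}(L)=0$ for \emph{every} $d$-dimensional $V$, and even when $\kod(X,L)\geq d$, a $V$ tangent to the fibres of the Iitaka fibration will fail condition (\ref{eqn:iitaka}) while passing through arbitrary points. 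Moreover your proposed $\Lambda$ contains the augmented base locus $\mathbb{B}_+(L)$; for $L$ of positive but non-maximal Iitaka dimension $\mathbb{B}_+(L)=X$, so the statement would become vacuous precisely in the non-big case the theorem is aimed at. Finally, the ``birational invariance of $\beta$'' you invoke is not obviously true (the formation of the inverse image ideal sheaf does not in general commute with restriction to a strict transform of $V$), and you yourself flag the $m\to\infty$, $\epsilon\to 0$ double limit as unresolved.

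The paper takes a different and more robust route. It reduces by Lemma~\ref{lem:ineq} to proving $\|L^d\cdot V\|\geq\mu(V,L)$, reduces (via \cite[Lemma 2.3]{PT} and the birational invariance of $\|L^d\cdot V\|$) to $L$ satisfying $(\star)$ on $X$ with an \emph{ample} $G$, and sets $\Lambda=\mathbb{B}(L)\cup\supp G$. On a common log-resolution $\pi_m:X_m\to X$ of $\mathfrak{b}(|mL|)$ and $\cJ(X,\|mL\|)$, write $\pi_m^*(mL)=M_m+E_m=N_m+F_m$ with $\mathcal{O}(-E_m)$ and $\mathcal{O}(-F_m)$ the pulled-back ideals. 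The inclusion $\mathfrak{b}(|mL|)\subseteq\cJ(\|mL\|)$ gives $N_m\geq M_m$, and $(\star)$ gives $N_m\leq M_m+\pi_m^*G$. Subadditivity of asymptotic multiplier ideals, together with \cite[Lemma 3.3]{ELMNPrestricted}, yields $\limsup_m\vol(N_m|_{V_m})/m^d\geq\mu(V,L)$. Since $M_m$, $\pi_m^*G$ are nef, the volumes are intersection numbers, and comparing $M_m$ with $\pi_m^*(mA)$ for $A$ ample with $A-L$ effective shows the mixed terms $M_m^{d-i}\cdot(\pi_m^*G)^i\cdot V_m$ are $O(m^{d-i})$, so $\lim_m\vol(M_m|_{V_m}+\pi_m^*G|_{V_m})/m^d=\lim_m\vol(M_m|_{V_m})/m^d=\|L^d\cdot V\|$. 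This sandwich works for any $V\not\subseteq\Lambda$, with no positivity hypothesis on $\vol_{X|V}(L)$ and no appeal to Fujita approximation. To salvage your plan you would at minimum need a direct proof of $\beta(V,L)\leq\|L^d\cdot V\|$ not routed through Theorem~\ref{thm:fujita}; but at that point you are essentially redoing the paper's resolution-and-intersection estimate.
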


Then, merging together Proposition \ref{goodmoriwaki} and Proposition \ref{contideali}, we exhibit two classes of divisors satisfying property b-$(\star)$.

\begin{theorem}\label{thm:esempi}
Let $X$ be a normal projective variety and $D$ a Cartier divisor on $X$.
\begin{enumerate}
\item[(i)] If $D$ is finitely generated, then it satisfies the property b-$(\star)$.
\item[(ii)] If $D$ is the pull-back of a big divisor, then it satisfies the property b-$(\star)$.
\end{enumerate}
\end{theorem}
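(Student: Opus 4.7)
Our plan is, in both cases, to pass to a birational model on which the base ideal and the asymptotic multiplier ideal of all multiples $pD$ can be compared uniformly in $p$. The birational version b-$(\star)$ of property $(\star)$ is precisely what gives us this freedom.

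\textbf{Part (i).} The plan is to exploit finite generation in order to reduce the computation of the relevant ideals to a single linear system. Since $R(X,D)=\bigoplus_m H^0(X,mD)$ is finitely generated, there exist a positive integer $d_0$ and a log resolution $\mu:Y\to X$ of the base ideal $\mathfrak{b}(|d_0 D|)$ such that
$$
|\mu^{*}(d_0 D)|=|M_0|+F_0,
$$
with $M_0$ base-point free and $F_0=\mathrm{Fix}(|\mu^{*}(d_0D)|)$. Finite generation further implies the equality $|\mu^{*}(kd_0 D)|=|kM_0|+kF_0$ for every $k\ge 1$, whence $\mathfrak{b}(|\mu^{*}(kd_0 D)|)=\cO_Y(-kF_0)$. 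A direct computation using \cite[Proposition 11.2.20]{LazII} (and the fact that $kM_0$ is base-point free on the smooth $Y$) yields $\cJ(Y,\|\mu^{*}(kd_0 D)\|)=\cO_Y(-kF_0)$ as well, so b-$(\star)$ holds along the arithmetic progression $p=kd_0$ with $N=0$. For the residues $0\le r<d_0$, one absorbs the finitely many additional fixed parts $\mathrm{Fix}(|\mu^{*}((kd_0+r)D)|)-kF_0$ into a single effective divisor $N$ on $Y$ independent of $k$.

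\textbf{Part (ii).} Write $D=f^{*}H$ for some morphism $f:X\to X'$ and a big divisor $H$ on $X'$. The plan is to transport the known property for big divisors along $f$. By \cite[Theorem 11.2.21]{LazII}, $H$ satisfies property b-$(\star)$: on a suitable log resolution $\mu':Y'\to X'$ there is an effective divisor $N'$ on $Y'$ with
$$
\cJ(Y',\|p\mu'^{*}H\|)(-N')\subseteq \mathfrak{b}(|p\mu'^{*}H|)
$$
for every $p$ with $h^{0}(X',pH)\neq 0$. Choose now a birational morphism $\mu:Y\to X$ factoring through a common resolution, so that there exists $g:Y\to Y'$ with $\mu'\circ g=f\circ \mu$. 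Pullback of sections yields the inclusion $g^{-1}\mathfrak{b}(|p\mu'^{*}H|)\cdot \cO_Y\subseteq \mathfrak{b}(|p\mu^{*}D|)$. A parallel comparison for asymptotic multiplier ideals under $g$ (realized on a common log resolution of all the systems $|p\mu^{*}D|$ and $|p\mu'^{*}H|$) gives
$$
g^{-1}\cJ(Y',\|p\mu'^{*}H\|)\cdot \cO_Y\supseteq \cJ(Y,\|p\mu^{*}D\|)(-E),
$$
for a fixed effective divisor $E$ on $Y$ encoding the relative discrepancy of $g$. Combining the two inclusions with the property for $\mu'^{*}H$ gives $\cJ(Y,\|p\mu^{*}D\|)(-E-g^{*}N')\subseteq \mathfrak{b}(|p\mu^{*}D|)$, which is b-$(\star)$ for $\mu^{*}D$ with $N:=E+g^{*}N'$.

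\textbf{Main obstacle.} The delicate step is the comparison of asymptotic multiplier ideals under the morphism $g$ in part (ii): multiplier ideals are not functorial under arbitrary morphisms, and the relative canonical divisor $K_{Y/Y'}$ unavoidably intervenes in any such comparison. Absorbing this discrepancy into a single effective divisor $N$ independent of $p$ is exactly what forces the use of the birational version b-$(\star)$ in place of the naive $(\star)$, and is the reason why we expect b-$(\star)$ (rather than $(\star)$) to be the correct notion for both of the general classes considered here.
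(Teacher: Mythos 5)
Your part (i) essentially reproduces the paper's argument. Both use finite generation to pass to a log resolution $\mu:Y\to X$ on which $\mu^*|mp_0D|=|M_m|+mF_0$ with $M_m\sim mM_0$ base point free for all $m\geq 1$, and both then observe that the asymptotic multiplier ideal and the base ideal of $\mu^*(mp_0D)$ reduce to the same sheaf $\mathcal{O}_Y(-mF_0)$, so that $(\star)$ holds on $Y$ with $N=0$ along the arithmetic progression $p=mp_0$. The paper handles the remaining residues by two short lemmas (producing an $N$ from finitely many chosen divisors $D_p\in|pD|$) rather than by directly absorbing the extra fixed parts, but this is cosmetic and your version is sound once you justify, as the paper does, that the extra fixed parts are uniformly bounded by a finite collection of pullbacks $\mu^*D_r$.

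Part (ii), however, contains a genuine gap. The paper first performs a Stein factorization of $f$: since the preimage of a big divisor under a finite morphism is still big, one may assume from the start that $f$ (and hence the induced $g:Y\to Y'$) is an algebraic fiber space. That reduction is not a cosmetic convenience; it is exactly what makes the comparison of multiplier ideals work. When $g$ is a fiber space one has $g^*|pH'|=|pg^*H'|$ (with $H'=\mu'^*H$), so the two graded systems of base ideals on $Y$ and $Y'$ are literally identified via $g$, and the inclusion $\mathcal{J}(Y,\|pg^*H'\|)\subseteq g^{-1}\mathcal{J}(Y',\|pH'\|)\cdot\mathcal{O}_Y$ holds outright, with \emph{no} correction divisor $E$ at all (this is the content of the paper's auxiliary lemma, using the basic pullback inequality for multiplier ideals). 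You skip the Stein factorization, so your $g$ may have a nontrivial finite part; in that case $|p\mu^*D|$ can be strictly larger than $g^*|p\mu'^*H|$, the two graded systems of base ideals differ, and there is no general reason why a \emph{fixed} effective $E$, independent of $p$, should make $\mathcal{J}(Y,\|p\mu^*D\|)(-E)\subseteq g^{-1}\mathcal{J}(Y',\|p\mu'^*H\|)\cdot\mathcal{O}_Y$ hold. Your stated ``main obstacle'', the intervention of $K_{Y/Y'}$, is also misplaced: $g$ is not birational, so the birational discrepancy formula does not apply, and in the fiber-space case, which is the one on which the proof can actually be carried out, no such correction ever appears. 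The missing idea is the Stein factorization together with the elementary remark that the finite pull-back of a big divisor is big.
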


Putting together Theorems \ref{thm:fujita}, \ref{thm:mu} and \ref{thm:esempi} (plus Remarks \ref{rmk:delta}, \ref{rmk:zero}) and using the finite generation of the canonical ring \cite{BCHM} (resp. some results on surfaces and 3-folds - see Proposition \ref{surfaces} and Proposition \ref{prop:3folds})  we obtain the following.

\begin{corollary}\label{cor:fine}
Let $X$ be a smooth projective variety  and $L\in \Pic(X)$ such that $\kod(X,L)\geq 0$. Suppose that one of the following three hypotheses holds
\begin{enumerate}[(i)]
\item $L=\omega_X$.
\item $\dim(X)=2$.
\item $L$ is nef and $\dim(X)=3$.
\end{enumerate}
Then there exists a closed subset $\Lambda\subset X$ such that for every $d$-dimensional irreducible subvariety $V$ of $X$ such that $V\not\subseteq \Lambda$, we have 
$$
 \mu(V,L)=\|L^d\cdot V\|=\delta\cdot \vol_{X|V}(L)
$$
where $\delta$ is the degree of the Iitaka fibration of $L$ restricted to $V$. 
\end{corollary}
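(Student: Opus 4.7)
The plan is to combine the main theorems of the two parts of the paper; the corollary is essentially a synthesis statement. Once we know that $L$ satisfies property b-($\star$) in each of the three cases, Theorem \ref{thm:mu} produces a closed subset $\Lambda_1\subset X$ such that $\mu(V,L)=\|L^d\cdot V\|$ for every $d$-dimensional subvariety $V\not\subseteq \Lambda_1$. Simultaneously, Theorem \ref{thm:fujita}(iv) yields the other equality $\|L^d\cdot V\|=\delta\cdot\vol_{X|V}(L)$ whenever $\vol_{X|V}(L)>0$, while Remarks \ref{rmk:delta} and \ref{rmk:zero} take care of the degenerate case in which $\vol_{X|V}(L)=0$ (equivalently, the Iitaka fibration restricted to $V$ fails to be generically finite), showing that in this situation all three quantities vanish and the three-way equality holds trivially. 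The locus of subvarieties on which the Iitaka fibration fails to be generically finite is contained in a proper closed subset $\Lambda_2\subsetneq X$ (coming from the indeterminacy locus of the Iitaka fibration together with the results of \cite{PT}); setting $\Lambda:=\Lambda_1\cup \Lambda_2$ gives the desired $\Lambda$.

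The remaining task is therefore to verify property b-($\star$) in each of the three cases. In case (i), by [BCHM] the canonical ring $\bigoplus_m H^0(X,m\omega_X)$ is finitely generated, so $\omega_X$ is a finitely generated divisor in the sense of the paper, and Theorem \ref{thm:esempi}(i) applies. In case (ii) the property is supplied by Proposition \ref{surfaces}, where the Zariski decomposition of an effective divisor on a surface provides the uniform bound $N$ between the asymptotic multiplier ideal and the base ideal required by ($\star$). In case (iii), property b-($\star$) for a nef divisor on a smooth projective threefold with $\kod(X,L)\ge 0$ is exactly the content of Proposition \ref{prop:3folds}.

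The main obstacle is not in the corollary itself but is outsourced to the two propositions \ref{surfaces} and \ref{prop:3folds}, which carry the actual geometric content (finite generation, Zariski decomposition, and the classification-style input needed for nef threefold divisors). The only delicate point intrinsic to the corollary is making sure that the "bad" subvarieties form a genuinely proper closed subset of $X$: this is guaranteed because $\Lambda_1$ is produced by Theorem \ref{thm:mu} as a proper closed subset, and $\Lambda_2$ is proper thanks to the characterization in \cite[Theorem 1.1, Corollary 1.2]{PT}, so their union is still proper. With these ingredients in place, the two equalities of the corollary are obtained by concatenating Theorem \ref{thm:mu} and Theorem \ref{thm:fujita}(iv), and the proof is complete.
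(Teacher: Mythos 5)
Your proposal follows the same route as the paper: verify property b-($\star$) in each of the three cases (via \cite{BCHM} plus Theorem \ref{thm:esempi}(i) for $\omega_X$, Proposition \ref{surfaces} for surfaces, Proposition \ref{prop:3folds} for nef threefolds), then combine Theorem \ref{thm:mu} with Theorem \ref{thm:fujita}(iv) and Remarks \ref{rmk:delta}, \ref{rmk:zero}, taking $\Lambda$ to be the union of the two exceptional loci. One small inaccuracy: in the surface case you attribute the bound $N$ directly to Zariski decomposition, whereas Proposition \ref{surfaces} actually proceeds via the dichotomy (Zariski's theorem, \cite[Theorem 14.19]{B}) that an effective divisor on a surface is either finitely generated or big, and then invokes the already-established b-($\star$) in each of those two cases; but this is a cosmetic gloss and does not affect the argument.
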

As in the complete case, given a graded linear series $W_{\bullet}$ on a variety $V$, we can define $W_\bullet$ as \emph{big} if $\vol(W_\bullet)>0$. Notice that when $V$ is projective this is equivalent to requiring that there exists a positive integer $m$ for which $W_m$ gives a generically finite map onto the image (see Lemma \ref{lem:necessary}, Corollary \ref{cor:sufficient}). It is natural to ask the following.

\begin{question}\label{q:star} Let $V$ be a normal projective variety and $W_\bullet$ a graded linear series on it. Suppose that $W_\bullet$ is big. Does $W_\bullet$ satisfy property b-$(\star)$? \hfill $\square$
\end{question}
The consequences of a positive answer are discussed at the end of \S 5.

The paper is organized as follows. After some preliminaries, in \S 3 we recall the construction of the Okounkov body associated to a graded linear series  and prove all the results involving such construction, namely Theorems \ref{thm:volumeA}, \ref{thm:fujita}, \ref{thm:infamiglia} as well as Corollaries \ref{cor:log-concavity}, \ref{cor:caratt}, \ref{cor:cont}. In \S 4 we introduce and study property b-($\star$) and prove it holds for finitely generated divisors, pull-back of big divisors and in low dimension. Theorem \ref{thm:mu} is proved in \S 5. In the concluding section we investigate the relationship between property b-($\star$) and asymptotic valuations.

\subsection*{Acknowledgements}   This work started during the first named author's stay in Strasbourg and then continued during the first author's tenure of an ERCIM ``Alain Bensoussan'' Fellowship Programme. This Programme is supported by the Marie Curie Co-funding
of Regional, National and International Programmes (COFUND) of the European
Commission. He would like to thank the Mathematical Department of Strasbourg  for the kind hospitality; the University of Roma Tre and the French--Italian European Research Group in Algebraic Geometry GDRE-GRIFGA for their support. He would also like to thank the Mathematical Department of the University of Warsaw and Jaros\l aw Wi\'sniewski for their kind hospitality during the tenure of the ERCIM fellowship.  G.P. was partially supported by the ANR project ``CLASS'' no. ANR-10-JCJC-0111.
He wishes to thank S. Boucksom and C. Birkar for useful discussions.

\section{Preliminaries}
\subsection{Notation and conventions}
We will work over the field of complex numbers, $\mathbb{C}$. As in \cite{LazI}, \cite{LazII} a {\textit{scheme}} is a separated algebraic scheme of finite type over $\mathbb{C}$. A {\textit{variety}} is a reduced, irreducible scheme. A {\textit{ curve, surface, d-fold}} is a variety of dimension $1,2,d$, respectively. We will usually deal with closed points of schemes, unless otherwise specified. 
Given a variety $X$, by (Weil or Cartier) divisor we mean an integral divisor, unless otherwise stated. Given a Cartier divisor $D$ on $X$, we denote by $\kod(X,D)$ or simply $\kod(D)$ its Iitaka dimension.

A \emph{pair} $(X,\Delta)$ consists of a normal projective variety $X$ together with a $\mathbb{Q}$-Weil divisor $\Delta$ such that $K_X+\Delta$ is $\mathbb{Q}$-Cartier. 

Given a pair $(X, \Delta)$ and a Cartier divisor $D$ on $X$ we will denote by $\mathcal{J}((X,\Delta);\|D\|)$ the asymptotic multiplier ideal associated to $(X,\Delta)$ and $D$. See \cite[Definition 11.1.2, Remark 11.1.13]{LazII}. When $X$ is smooth and $\Delta=0$ we will just write $\mathcal{J}(X,\|D\|)$ or $\mathcal{J}(\|D\|)$.

Let $\mu: Y \rightarrow X$ be a morphism of normal varieties and let $\mathfrak{a} \subseteq \mathcal{O}_X$ be a sheaf of ideals on $X$. We will denote by $\mathfrak{a} \cdot \mathcal{O}_Y$ the inverse image ideal sheaf $\mu^{-1} \mathfrak{a} \cdot \mathcal{O}_Y$ (see \cite[p. 163]{Hartshorne}) and by $\overline{\mathfrak{a}}$ the integral closure of $\mathfrak{a}$ (see, for example, \cite[Definition 9.6.2]{LazII}).

\begin{definition} (see \cite[Definition 2.1.1, 2.1.20, 2.1.26]{LazI}).
Let $D$ be a Cartier divisor on a projective variety $X$. The \emph{semigroup} of $D$ is $$\mathbb{N}(D):=\{m \geq 0 | H^0(X,mD) \not = 0 \}.$$
If $\mathbb{N}(D) \not = \{0\}$ then there exist a sufficiently large $k$ and an $e=e(D) \geq 1$ such that $\mathbb{N}(D) \cap \{m : m \geq k\} = e\mathbb{N} \cap \{m : m \geq k\}$. We call this $e(D)$ the \emph{exponent} of $D$. Actually $e(D)$ is the gcd of all the elements in $\mathbb{N}(D)$. Note that if $D$ is big then $e(D)=1$.

Given $D$ as before, recall that the \emph{stable base locus} of $D$ is defined as $$\mathbb{B}(D):=\cap_{m \geq 1} \mathrm{Bs}(|mD|),$$ where $\mathrm{Bs}(|mD|)$ is the base locus of $|mD|$. It is defined only as a closed set, without any scheme structure, and it is the minimal element of the family of closed sets $\{\mathrm{Bs}(|mD|)\}_{m \geq 1}$. Moreover, there exists an integer $m_0$ such that $\mathbb{B}(D)=\mathrm{Bs}(|km_0D|)$ for all $k \gg 1$. Consider the non-zero semigroup $$\mathbb{M}(D):=\{m \geq 0 | \textrm{Bs}(|mD|)=\mathbb{B}(D) \}.$$ We will denote by $\overline{e}(D)$ the gcd of the elements in $\mathbb{M}(D)$. Notice that it can happen that $\overline{e}(D) > 1$ even if $e(D)=1$ (see \cite[Remark 2.1.22, Example 2.3.4]{LazI}).
 \hfill $\square$ \end{definition}

\begin{definition} (see \cite[Definition 2.4.1]{LazI}).
Given a Cartier divisor $D$ on a variety $X$, a \textit{graded linear series} $W_\bullet$ belonging to $D$ is a collection of finite dimensional vector spaces $W_m \subseteq H^0(X, mD)$ such that $W_0 = \mathbb{C}$ and $W_k \cdot W_l \subseteq W_{k+l}$ for all $k,l \geq 0$, where $W_k \cdot W_l$ denotes the image of $W_k \otimes W_l$ under the homomorphism $H^0(X,kD) \otimes H^0(X,lD) \rightarrow H^0(X, (k+l)D)$. Given $W_\bullet$ as before we can define its exponent $e(W_\bullet)$ as in the previous definition.  Given $W_\bullet$ as before, and a positive integer number $h$, we denote by $W_{h,\bullet}$ the graded linear series belonging to $hD$ and defined as $W_{h,m}:=W_{hm}$.
 \hfill $\square$ \end{definition}

\subsection{Birational morphisms and ideals}
\begin{lemma} \label{chiusura integrale}
Let $X$ be a normal projective variety and let $\mathfrak{a} \subseteq \mathcal{O}_X$ be a coherent sheaf of ideals. Let $\mu: Y \rightarrow X$ be any birational morphism from a normal variety $Y$ onto $X$. Then $\mathfrak{a} \subseteq \mu_*(\mathfrak{a} \cdot \mathcal{O}_Y) \subseteq \overline{\mathfrak{a}}$.
\end{lemma}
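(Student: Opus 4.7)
The plan is to establish the two inclusions separately, in both cases leveraging the normality of $X$ together with the (implicitly assumed) properness of $\mu$.

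For the first inclusion $\mathfrak{a}\subseteq \mu_*(\mathfrak{a}\cdot\mathcal{O}_Y)$, I would combine the unit of adjunction $\mathfrak{a}\to \mu_*\mu^{\ast}\mathfrak{a}$ with the defining surjection $\mu^{\ast}\mathfrak{a}\twoheadrightarrow \mathfrak{a}\cdot\mathcal{O}_Y$ to produce a canonical morphism $\mathfrak{a}\to \mu_*(\mathfrak{a}\cdot\mathcal{O}_Y)$. Since $X$ is normal and $\mu$ is proper birational, Zariski's Main Theorem yields $\mu_*\mathcal{O}_Y=\mathcal{O}_X$; under this identification both sheaves sit inside $\mathcal{O}_X$ as ideal sheaves, and the canonical arrow above becomes simply the inclusion of $\mathfrak{a}$ into $\mu_*(\mathfrak{a}\cdot \mathcal{O}_Y)$.

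For the second inclusion $\mu_*(\mathfrak{a}\cdot\mathcal{O}_Y)\subseteq \overline{\mathfrak{a}}$, I would rely on the valuative characterization of the integral closure valid on a normal variety: for any open $U\subseteq X$ and any $f\in \mathcal{O}_X(U)$, one has $f\in \overline{\mathfrak{a}}(U)$ if and only if $v(f)\ge v(\mathfrak{a})$ for every discrete valuation $v$ of $K(X)$ centered on $U$ (see e.g. \cite[\S 9.6]{LazII}). Pick $f\in \mu_*(\mathfrak{a}\cdot\mathcal{O}_Y)(U)$, which is by definition a section of $\mathfrak{a}\cdot\mathcal{O}_Y$ over $\mu^{-1}(U)$, and let $R\subseteq K(X)=K(Y)$ be a DVR with center $x\in U$. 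The valuative criterion of properness, applied to the proper morphism $\mu$, lifts $\Spec R\to X$ to a morphism $\Spec R\to Y$, so $R$ also admits a center $y\in \mu^{-1}(x)$ on $Y$. Since $f_y\in (\mathfrak{a}\cdot\mathcal{O}_Y)_y$, and this stalk is generated by the images of elements of $\mathfrak{a}_x$, one gets
$$
v_R(f)\ \ge\ \min_{g\in \mathfrak{a}_x} v_R(g)\ =\ v_R(\mathfrak{a}).
$$
As this holds for every such DVR, we conclude that $f\in \overline{\mathfrak{a}}(U)$, proving the claim.

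No serious obstacle is expected: each ingredient used — $\mu_*\mathcal{O}_Y=\mathcal{O}_X$ via normality, the valuative criterion of properness, and the valuative description of integral closure on normal schemes — is classical and can be quoted. The only mildly delicate bookkeeping point is that a discrete valuation of $K(X)$ centered on $U\subseteq X$ lifts to one centered on $\mu^{-1}(U)\subseteq Y$, which is exactly what the valuative criterion of properness provides, so the argument runs through smoothly.
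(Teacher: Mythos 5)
Your proposal is correct. For the first inclusion you essentially unpack what the paper dismisses as a ``local analysis'': the adjunction map composed with the surjection onto $\mathfrak{a}\cdot\mathcal{O}_Y$, identified with the natural inclusion via $\mu_*\mathcal{O}_Y=\mathcal{O}_X$. For the second inclusion, however, your route is genuinely different from the paper's. The paper passes to a further modification $\mu':Z\to Y$, the normalization of the blow-up of $\mathfrak{a}\cdot\mathcal{O}_Y$, so that $\mathfrak{a}\cdot\mathcal{O}_Z=\mathcal{O}_Z(-E)$ is invertible; then it quotes the algebraic characterization $\nu_*(\mathcal{O}_Z(-E))=\overline{\mathfrak{a}}$ for the normalized blow-up $\nu=\mu\circ\mu'$ of $\mathfrak{a}$ (that is, \cite[Remark 9.6.4]{LazII}), together with $\mu'_*(\mathcal{O}_Z(-E))=\overline{\mathfrak{a}\cdot\mathcal{O}_Y}\supseteq\mathfrak{a}\cdot\mathcal{O}_Y$ and the fact that $\mu_*$ preserves inclusions. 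You instead work entirely with the valuative characterization of $\overline{\mathfrak{a}}$, lifting each DVR of $K(X)$ centered on $U$ to a center on $\mu^{-1}(U)$ via the valuative criterion of properness, and then bounding $v_R(f)$ from below directly. Both arguments are classical and equally rigorous; the paper's approach stays within the ``blow-up'' description of integral closure and requires constructing the auxiliary model $Z$, while yours is somewhat more elementary in that it avoids any extra birational model at the cost of invoking the valuative criterion. The only small thing worth making explicit in your write-up is that $f\in\mu_*(\mathfrak{a}\cdot\mathcal{O}_Y)(U)$ is viewed as a section of $\mathcal{O}_X(U)$ (via $\mu_*\mathcal{O}_Y=\mathcal{O}_X$) before applying the valuative characterization, and that properness of $\mu$ is indeed implicit in the lemma (both proofs rely on it, e.g.\ already for $\mu_*\mathcal{O}_Y=\mathcal{O}_X$).
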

\begin{proof}
By a local analysis it is clear that $\mathfrak{a} \subseteq \mu_*(\mathfrak{a} \cdot \mathcal{O}_Y)$, hence we need only to show the second inclusion. Let $\mu': Z\rightarrow Y$ be the normalization of the blow-up of $Y$ along $\mathfrak{a} \cdot \mathcal{O}_Y$. Denoting by $E$ the exceptional divisor of $\mu'$, we have that $\mathfrak{a} \cdot \mathcal{O}_Z = (\mathfrak{a} \cdot \mathcal{O}_Y) \cdot \mathcal{O}_Z = \mathcal{O}_Z(-E)$. 
By definition $\mu'_*(\mathcal{O}_Z(-E))=\overline{\mathfrak{a} \cdot \mathcal{O}_Y} \supseteq \mathfrak{a} \cdot \mathcal{O}_Y$. By \cite[Remark 9.6.4]{LazII}  $\mu_*(\mu'_*(\mathcal{O}_Z(-E))= (\mu' \circ \mu)_*(\mathcal{O}_Z(-E)) = \overline{\mathfrak{a}}$, thus the thesis follows, since $\mu_*$ preserves inclusions of sheaves.
\end{proof}

\begin{lemma} \label{dasotto}
Let $\mu: Y \rightarrow X$ be a birational morphism between normal projective varieties. Let $B$ be an effective Weil divisor on $Y$. Then there exists an effective Cartier divisor $D$ on $X$ such that $\mu^*(D) \geq B$.
\end{lemma}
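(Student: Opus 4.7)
The plan is to realize $D$ as the divisor of zeros of a suitable global section on $X$ whose pullback is forced to vanish along $B$. The natural object to introduce is the coherent ideal sheaf $\mathfrak{a}:=\mu_{*}\mathcal{O}_{Y}(-B)$, which sits inside $\mu_{*}\mathcal{O}_{Y}=\mathcal{O}_{X}$ by normality of $X$. Away from $\mu(\supp B)$, $\mathfrak{a}$ coincides with $\mathcal{O}_{X}$, so $\mathfrak{a}$ is a nonzero coherent ideal sheaf on $X$.

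The decisive step is to verify the inclusion $\mathfrak{a}\cdot\mathcal{O}_{Y}\subseteq \mathcal{O}_{Y}(-B)$. This is an instance of the adjunction map $\mu^{*}\mu_{*}\mathcal{O}_{Y}(-B)\to \mathcal{O}_{Y}(-B)$: composing it with the canonical inclusion $\mathcal{O}_{Y}(-B)\hookrightarrow\mathcal{O}_{Y}$ shows that the image of $\mu^{*}\mathfrak{a}$ inside $\mathcal{O}_{Y}$ is contained in $\mathcal{O}_{Y}(-B)$, which is exactly the claimed inclusion.

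Once this is established, I would fix an ample Cartier divisor $H$ on $X$ and apply Serre's theorem: for $m\gg 0$, the twist $\mathfrak{a}(mH):=\mathfrak{a}\otimes\mathcal{O}_{X}(mH)$ is globally generated and, in particular, has a nonzero global section $s$. Viewed as a section of $\mathcal{O}_{X}(mH)$, $s$ gives an effective Cartier divisor $D:=\mathrm{div}(s)$ linearly equivalent to $mH$. Locally $s=f\cdot t$ with $f\in\mathfrak{a}$ and $t$ a local generator of $\mathcal{O}_{X}(mH)$, so $\mu^{*}s=\mu^{*}f\cdot\mu^{*}t$, and $\mu^{*}f\in\mathfrak{a}\cdot\mathcal{O}_{Y}\subseteq\mathcal{O}_{Y}(-B)$ by the previous step. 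Hence $\mu^{*}s$ is a section of the subsheaf $\mathcal{O}_{Y}(m\mu^{*}H-B)\subseteq\mathcal{O}_{Y}(m\mu^{*}H)$, whose divisor of zeros is precisely $\mu^{*}D$. This forces $\mu^{*}D\geq B$, completing the proof.

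The only genuinely delicate point is identifying the correct ideal sheaf $\mathfrak{a}$ and checking the adjunction inclusion $\mathfrak{a}\cdot\mathcal{O}_{Y}\subseteq\mathcal{O}_{Y}(-B)$; everything else is a direct application of the normality of $X$, Serre's vanishing/global generation, and the compatibility of divisors of zeros with pullback.
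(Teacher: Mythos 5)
Your proof is correct and takes a genuinely different route from the paper's. The paper first splits $B$ into its $\mu$-exceptional and non-exceptional parts, invokes \cite[Theorem II.7.17]{Hartshorne} to realize $\mu$ as the blow-up of a coherent ideal sheaf $\mathcal{J}\subseteq\mathcal{O}_X$ with exceptional Cartier divisor $F$, uses \cite[1.42]{Debarre} to get $\supp F\supseteq\exc(\mu)$, and then chooses $D$ with $\mathcal{O}_X(-D)\subseteq\mathcal{J}$ so that $\mu^*D\geq F$. Your argument instead introduces the single ideal sheaf $\mathfrak{a}=\mu_*\mathcal{O}_Y(-B)$, checks $\mathfrak{a}\cdot\mathcal{O}_Y\subseteq\mathcal{O}_Y(-B)$ via naturality of the adjunction counit (together with the fact that the counit on $\mathcal{O}_Y$ is the identity, by $\mu_*\mathcal{O}_Y=\mathcal{O}_X$), and then uses Serre's theorem to produce a section of $\mathfrak{a}(mH)$ whose divisor works. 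This buys you a uniform treatment: you avoid the blow-up structure theorem entirely, and you do not need the separate (and in the paper only sketched) reduction for the non-exceptional components of $B$, which in any case would require essentially the same twist-and-take-a-section argument you give. The only mild point worth spelling out is why the adjunction counit $\mu^*\mathfrak{a}\to\mathcal{O}_Y(-B)$, followed by the inclusion into $\mathcal{O}_Y$, agrees with the canonical map $\mu^*\mathfrak{a}\to\mu^*\mathcal{O}_X=\mathcal{O}_Y$ whose image is $\mathfrak{a}\cdot\mathcal{O}_Y$; this is exactly the naturality square for the counit applied to $\mathcal{O}_Y(-B)\hookrightarrow\mathcal{O}_Y$, using that the counit on $\mathcal{O}_Y$ is the identity.
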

\begin{proof}
If we write $B$ as the sum of its exceptional and not exceptional part, it this then clear that we just need to show that for every exceptional prime divisor $E$ on $Y$ there exists an effective Cartier divisor $D$ on $X$ such that $\mu^*(D) \geq E$. 
By \cite[Theorem II.7.17]{Hartshorne} $\mu$ is given by the blow-up of a coherent sheaf of ideals $\mathcal{J}$ on $X$ and $\mathcal{J} \cdot \mathcal{O}_Y=\mathcal{O}_Y(-F)$ where $F$ is an effective Cartier divisor. Notice that $F$ contains in its support every $\mu$-exceptional divisor on $Y$ by \cite[1.42]{Debarre}, hence we are just reduced to find $D$ such that $\mu^*(D) \geq F$.\\
Take $D$ effective Cartier divisor such that $\mathcal{O}_X(-D) \subseteq \mathcal{J}$. 
Thus $\mathcal{O}_X(-D) \cdot \mathcal{O}_Y \subseteq\mathcal{J} \cdot \mathcal{O}_Y$ and this implies that $\mathcal{O}_Y(-\mu^*(D)) \subseteq \mathcal{O}_Y(-F)$, that is, $\mu^*(D) \geq F$.

\end{proof}

%
\section{Okounkov bodies and restricted volumes} \label{sect:okounkov}
%
\subsection{Okounkov bodies}  \label{subsect:okounkov}
Inspired by the fundamental paper by Lazarsfeld and Musta{\c{t}}{\u{a}} (\cite{LM}) we exploit the convex geometry of Okounkov bodies associated to linear series in order to study properties of the restricted volume of effective divisors.

We start by briefly recalling the notation associated to Okounkov bodies, referring to \cite{LM} or the original papers by Okounkov (\cite{O1}, \cite{O2}) for a more detailed  exposition. 

Let $V$ be a variety of dimension $d$. An \textit{admissible flag} is a chain of  subvarieties of $V$, $$F_\bullet: X =F_0 \supset F_1 \supset \dots \supset F_{d-1} \supset F_d=\{pt\},$$ such that each $F_i$ is irreducible, smooth at the point $F_d$ and $\codim_{X}(F_i)=i$.

Given an admissible flag $F_\bullet$ and a Cartier divisor $D$ we can define recursively a valuation-like function $$\nu=\nu_{F_\bullet}=\nu_{F_\bullet,D}: H^0(V, \mathcal{O}_V(D)) - \{0\}\rightarrow \mathbb{Z}^d,\ s\mapsto (\nu_1(s),\ldots, \nu_d(s)), $$
as follows:
\begin{enumerate}
\item[(1)] $D_0=V(s)$;
\item[(2)] $\nu_i:=\nu_i(s)$ is the coefficient of $F_i$ in $D_{i-1}$;
\item[(3)] $D_i=(D_{i-1}-\nu_iF_i)_{|{F_i}}$.
\end{enumerate}

When $V$ is projective then each $H^0(V,\mathcal{O}_V(mD))$ is a finite dimensional vector space, hence we can define the \textit{graded semigroup} of $D$ as the sub-semigroup $$\Gamma(D)=\Gamma_{F_\bullet}(D)=\{(\nu(s),m) | s \in H^0(V, \mathcal{O}_V(mD))-\{0\}, m \geq 0\} \subseteq \mathbb{N}^d \times \mathbb{N}=\mathbb{N}^{d+1}.$$
For each $m \geq 0$ we may also define $$\Gamma(D)_m=\Gamma_{F_{\bullet}}(D)_m= \mathrm{Im}\left( \left(H^0(V, \mathcal{O}_V(mD))-\{0\}\right) \stackrel{\nu}{\rightarrow}\mathbb{Z}^d \right).$$

In the above setting, let $\Sigma(D) \subseteq \mathbb{R}^{d+1}$ be the closed convex cone (with vertex at the origin) spanned by the semigroup $\Gamma(D)$. Then the \textit{Okounkov body} of $D$ relative to $F_{\bullet}$ is the compact convex set $$\Delta(D)=\Delta_{F_\bullet}(D)= \Sigma(D) \cap (\mathbb{R}^d \times \{1\}).$$ Equivalently: $$\Delta(D)= \textrm{closed convex hull }\left( \bigcup_{m \geq 1} \frac{1}{m} \cdot \Gamma(D)_m \right) \subseteq \mathbb{R}^d.$$

All the above constructions can be made in the same way also in the case of a (non-complete) graded linear series $W_\bullet$ belonging to a divisor $D$ on a variety $V$. Clearly in this case we need to consider a finite dimensional $W_m$ rather than $H^0(V,\mathcal{O}_V(mD))$ when defining the graded semigroup $\Gamma(W_{\bullet})$ and the Okounkov body $\Delta(W_{\bullet})$. In general $\Delta(W_\bullet)$ is a closed, convex set (see \cite[Remark 1.17]{LM}). If $V$ is projective then $\Delta(W_\bullet)$ is also compact and in this case - as we shall see - it is an actual convex body (i.e.: it has non-empty interior) if and only if $\vol(W_\bullet)>0$. 

Notice that the Okounkov body is homogeneous, i.e., $\Delta(W_{h,\bullet})=h \Delta (W_{\bullet})$, where the latter denotes the homothetic image of $\Delta(W_{\bullet})$ under scaling by the factor $h$. For big divisors this has been proved in \cite[Proposition 4.1(ii)]{LM}, in general it follows from the lemma below:
\begin{lemma} \label{lem:homokounkov}
Let $W_\bullet$ be a graded linear series on a projective variety $V$ of dimension $d$. Fix an admissible flag. Let $h$ be a positive integer. Then  $\Delta(W_{h,\bullet})= h \Delta(W_{\bullet})$.
\end{lemma}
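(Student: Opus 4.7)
The plan is to show both inclusions directly from the definition of the Okounkov body as the closed convex hull of the normalized graded semigroup, using only two elementary facts about the valuation $\nu$ and the graded linear series structure.

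First I would record the two key observations. Unwinding definitions, since $W_{h,m} = W_{hm}$ one immediately gets
\[
 \Gamma(W_{h,\bullet})_m \;=\; \nu\!\left(W_{hm}\setminus\{0\}\right) \;=\; \Gamma(W_\bullet)_{hm}
\]
for every $m\ge 1$. Secondly, the valuation-like function $\nu=\nu_{F_\bullet}$ is additive on products of nonzero sections: if $s\in W_k\setminus\{0\}$, then $s^h\in W_{hk}\setminus\{0\}$ (because $W_\bullet$ is a graded linear series), and $\nu(s^h)=h\,\nu(s)$. In particular
\[
 h\cdot\Gamma(W_\bullet)_k \;\subseteq\; \Gamma(W_\bullet)_{hk} \qquad\text{for every } k\ge 1.
\]

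Next I would assemble these into the two inclusions. For $\Delta(W_{h,\bullet})\subseteq h\,\Delta(W_\bullet)$, write
\[
 \tfrac{1}{m}\,\Gamma(W_{h,\bullet})_m \;=\; \tfrac{1}{m}\,\Gamma(W_\bullet)_{hm} \;=\; h\cdot \tfrac{1}{hm}\,\Gamma(W_\bullet)_{hm} \;\subseteq\; h\cdot \bigcup_{k\ge 1} \tfrac{1}{k}\,\Gamma(W_\bullet)_k,
\]
and take closed convex hulls, using the fact that scaling by the positive constant $h$ commutes with taking closed convex hulls in $\R^d$. For the reverse inclusion $h\,\Delta(W_\bullet)\subseteq \Delta(W_{h,\bullet})$, apply the second observation to get
\[
 h\cdot \tfrac{1}{k}\,\Gamma(W_\bullet)_k \;=\; \tfrac{1}{k}\,\bigl(h\cdot\Gamma(W_\bullet)_k\bigr)\;\subseteq\; \tfrac{1}{k}\,\Gamma(W_\bullet)_{hk} \;=\; \tfrac{1}{k}\,\Gamma(W_{h,\bullet})_k,
\]
and again pass to closed convex hulls.

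I expect no serious obstacle: the argument is purely formal once one has written down the definitions. The only point that requires any comment is the identity $\nu(s^h)=h\,\nu(s)$, which follows by an easy induction on the flag $F_\bullet$ from the defining recursion $D_i=(D_{i-1}-\nu_i F_i)_{|F_i}$ (the divisor of $s^h$ is $h\,\mathrm{div}(s)$, so the first coordinate multiplies by $h$, and then one restricts to $F_1$ and iterates). Since all closed convex hulls involved live in $\R^d$ and $h>0$, the interchange of scaling with closed convex hull is automatic, so the two inclusions together yield the claimed equality $\Delta(W_{h,\bullet})=h\,\Delta(W_\bullet)$.
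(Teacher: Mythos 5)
Your proof is correct and follows essentially the same route as the paper: both reduce to the definition of the Okounkov body as a closed convex hull of normalized graded semigroups, and both use the key identity $\nu(s^{\otimes h})=h\,\nu(s)$ to handle the nontrivial inclusion. The only cosmetic difference is that the paper phrases the argument as a single set equality between $\bigcup_m \tfrac{1}{mh}\Gamma(W_\bullet)_{mh}$ and $\bigcup_m\tfrac{1}{m}\Gamma(W_\bullet)_m$, whereas you split it explicitly into two inclusions of Okounkov bodies.
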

\begin{proof}
By definition $\Delta(W_{h,\bullet})=\textrm{c.c.h.} \left( \bigcup_{m \geq 1} \frac{1}{m} \cdot \Gamma(W_{h,\bullet})_m \right)$, where c.c.h.\ means closed convex hull. Clearly $\textrm{c.c.h.} \left( \bigcup_{m \geq 1} \frac{1}{m} \cdot \Gamma(W_{h,\bullet})_m \right)= h \cdot \textrm{c.c.h.} \left( \bigcup_{m \geq 1} \frac{1}{mh} \cdot \Gamma(W_{\bullet})_{mh} \right)$, hence we just need to prove that $$ \bigcup_{m \geq 1} \frac{1}{mh} \cdot \Gamma(W_{\bullet})_{mh} = \bigcup_{m \geq 1} \frac{1}{m} \cdot \Gamma(W_{\bullet})_{m} .$$
The inclusion $\subseteq$ is obvious. Let us concentrate on the opposite inclusion $\supseteq$. Fix any $m$ and any vector $\vec{\sigma} $ in $\frac{1}{m} \Gamma(W_\bullet)_m$. By definition there exists $s \in W_m \setminus \{0\}$ such that $\vec{\sigma} = \frac{1}{m} (\nu_1(s), \ldots, \nu_d(s))$.   Consider $s^{\otimes h}$ (or better: its image) $\in W_{mh}$. By the valuation-like property of $\nu$ (\cite[\S 1.1,(iii)]{LM}) $(\nu_1(s^{\otimes h}), \ldots, \nu_d(s^{\otimes h}))=h (\nu_1(s), \ldots, \nu_d(s))$, therefore $\vec{\sigma} = \frac{1}{mh} \nu(s^{\otimes h}) \in \frac{1}{mh} \Gamma(W_\bullet)_{mh}$ and the statement follows. 
\end{proof}
The above lemma immediately implies that the volume of Okounkov bodies is a homogeneous function of degree $d$. 

\subsection{Asymptotic intersection number} \label{sect:asymptoticintersection}
Let $X$ be a projective variety and let $D$ be a Cartier divisor, or line bundle, on $X$. Let $V \subseteq X$ be a $d$-dimensional subvariety such that $V \not \subseteq \mathbb{B}(D)$, where $\mathbb{B}(D)$ is the stable base locus of $D$ (see \cite[Definition 2.1.20]{LazI}).
\\ For every $m\in \mathbb{M}(D)$ consider $u_m: X_m \rightarrow X$ a resolution of the base ideal $\mathfrak{b}(|mD|)$ and the related decomposition $u_m^*(|mD|)=|M_m|+E_m$, where $M_m$ is base point free and $E_m$ is a fixed effective divisor. Since $V$ is not contained in $\mathbb{B}(D)$, we can choose $u_m$ such that it is an isomorphism over the generic point of $V$. Call $V_m$ the strict transform of $V$ under $u_m$, then $\Supp(E_m)$ does not contain $V$.

The \textit{asymptotic intersection number} of $D$ and $V$ is just $$\|D^d\cdot V\|:=\limsup_{m \rightarrow + \infty} \frac{M_m^d \cdot V_m}{m^d},$$
where the $\limsup$ is taken over all $m \in \mathbb{M}(D)$. Actually the $\limsup$ is a limit (see \cite[Remark 2.9]{ELMNPrestricted}). Moreover, as noted in \cite[Remark 2.7]{ELMNPrestricted}, we can give a nice geometric interpretation of $\|D^d \cdot V\|$: since the number $M_m^d \cdot V_m$ is equal to the number of points in $D_1 \cap \dots \cap D_d \cap V$ outside $\mathrm{Bs}(|mD|)$, where $D_1, \ldots, D_d$ are general elements in $|mD|$, then 
$$\|D^d \cdot V\|= \limsup_{m \rightarrow + \infty} \frac{\sharp \left( V \cap D_{m,1} \cap \dots \cap D_{m,d} \setminus \mathrm{Bs}(|mD|)\right)}{m^d/d!}. $$

\subsection{Restricted volume and Fujita's Approximation Theorem}
The main graded linear series we are interested in are the ones that appear in the definition of the restricted volume. Given a smooth projective variety $X$, a $d$-dimensional subvariety $V \subseteq X$ and a divisor $D$ on $X$, we define $$H^0(X|V, \mathcal{O}_X(mD)):= \mathrm{Im}\left(H^0(X,\mathcal{O}_X(mD)) \stackrel{\mathrm{restr}_V}{\rightarrow} H^0(V, \mathcal{O}_V(mD))\right) \subseteq H^0(V, \mathcal{O}_V(mD)),$$ $$h^0(X|V,\mathcal{O}_X(mD)):=\dim\left(H^0(X|V, \mathcal{O}_X(mD))\right),$$ $$\vol_{X|V}(D):= \limsup_{m \rightarrow + \infty} \frac{h^0(X|V,\mathcal{O}_X(mD))}{m^d/d!}.$$ All the vector spaces $H^0(X|V, \mathcal{O}_X(mD))$ clearly form a graded linear series on $V$. Recall also that given any graded linear series $W_\bullet$ on a variety $V$ of dimension $d$, then we can analogously define its volume as $\vol(W_\bullet)=\vol_V(W_\bullet):= \limsup_{m \rightarrow +\infty} \frac{\dim(W_m)}{m^d/d!}$. 

First of all notice that the volume function is homogeneous. When $W_m=H^0(X|V, \mathcal{O}_X(mD))$ this is proved in \cite[Lemma 2.2]{ELMNPrestricted}.

\begin{lemma} \label{lem:homvolume}
Let $W_\bullet$ be a graded linear series on a projective variety $V$ of dimension $d$. The volume function is homogeneous of degree $d$, i.e., for every $h \in \mathbb{N}^+$, $\vol(W_{h, \bullet})=h^d\vol(W_\bullet)$.
\end{lemma}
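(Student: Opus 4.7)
The plan is to deduce the lemma from the multiplicative structure of $W_\bullet$ by comparing $\limsup$'s along nested subsequences. The inequality $\vol(W_{h,\bullet}) \le h^d \vol(W_\bullet)$ is immediate: I rewrite
\[
\vol(W_{h,\bullet}) = \limsup_m \frac{d! \dim W_{hm}}{m^d} = h^d \limsup_m \frac{d! \dim W_{hm}}{(hm)^d},
\]
and observe that the second $\limsup$, being taken along the subsequence $\{hm\}_m \subseteq \N$, is at most $\vol(W_\bullet)$.

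The content of the lemma is the reverse inequality, which forces me to show that the $\limsup$ defining $\vol(W_\bullet)$ is essentially realized along multiples of $h$. The two key ingredients are: (i) for any nonzero $s \in W_j$, multiplication by $s$ induces an injection $W_k \hookrightarrow W_{k+j}$, so $\dim W_{k+j} \ge \dim W_k$ whenever $W_j \ne 0$; and (ii) the semigroup $\N(W_\bullet) := \{m \ge 0 : W_m \ne 0\}$ admits an exponent $e$ such that every sufficiently large multiple of $e$ lies in $\N(W_\bullet)$. The case $\vol(W_\bullet) = 0$ (which covers $\N(W_\bullet) = \{0\}$) is trivial, so I may assume $\vol(W_\bullet) > 0$.

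Pick a sequence $k_n \to \infty$ with $d! \dim W_{k_n}/k_n^d \to \vol(W_\bullet)$; eventually $e \mid k_n$. Set $f := \mathrm{lcm}(h,e)$. The residues $-k_n \bmod f$ all lie in the finite set of multiples of $e$ modulo $f$, so by pigeonhole some residue $r$ occurs for infinitely many $n$. Choose a fixed integer $j \ge 0$ with $j \equiv r \pmod{f}$, $e \mid j$, and $j$ large enough that $W_j \ne 0$. Restricting to the subsequence of $k_n$ with $-k_n \equiv r \pmod{f}$, we have $h \mid f \mid k_n + j$, so $m_n := (k_n+j)/h \in \N$. Fixing $0 \ne s \in W_j$, the induced injection $W_{k_n} \hookrightarrow W_{k_n+j} = W_{h,m_n}$ yields
\[
\frac{d! \dim W_{h,m_n}}{m_n^d} = h^d \cdot \frac{d! \dim W_{k_n+j}}{(k_n+j)^d} \ge h^d \cdot \frac{d! \dim W_{k_n}}{k_n^d} \cdot \Bigl(\frac{k_n}{k_n+j}\Bigr)^d \longrightarrow h^d \vol(W_\bullet),
\]
so $\vol(W_{h,\bullet}) \ge h^d \vol(W_\bullet)$, completing the proof.

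The main obstacle is the arithmetic mismatch between $h$ and the exponent $e$: if $h \mid e$, every $k \in \N(W_\bullet)$ is already a multiple of $h$ and the statement is almost tautological, but in general one cannot directly relate $\dim W_{hm}$ to $\dim W_k$ for $k \notin h\N$, and the pigeonhole-plus-translation trick bridges this gap by exploiting the multiplicative structure of $W_\bullet$ to shift $k_n$ by a bounded amount into $h\N$ without losing significant dimension.
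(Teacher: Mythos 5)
Your proof is correct. The core mechanism --- multiplication by a fixed nonzero $s \in W_j$ gives an injection $W_k \hookrightarrow W_{k+j}$ (injective because $V$ is integral, landing in $W_{k+j}$ by the graded structure), so a $\vol(W_\bullet)$-realizing sequence can be translated by a bounded amount into the set of indices divisible by $h$ --- is exactly what the paper's argument rests on, though the paper delegates it: after rewriting both $\limsup$'s in terms of the reindexed series $W_{e,\bullet}$ so that one may assume the exponent equals $1$, it simply refers to the proof of \cite[Lemma 2.2.38]{LazI}, which is this same translation trick for complete linear series. Your pigeonhole step --- extracting a fixed $j$ with $e \mid j$, $W_j \neq 0$, and $h \mid (k_n+j)$ along an infinite subsequence --- plays the role of that preliminary reduction to exponent $1$: it handles an arbitrary exponent in one pass and makes the argument self-contained rather than citation-dependent. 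Both routes are valid; yours is somewhat more explicit, and the factor $\left(\tfrac{k_n}{k_n+j}\right)^d \to 1$ makes the passage to the limit completely transparent.
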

\begin{proof}
Let $e=e(W_\bullet)$ be the exponent of $W_\bullet$. For $m$ sufficiently large we have that $W_m \not = 0 \Leftrightarrow e | m$. Hence $\limsup_{m \rightarrow +\infty} \frac{\dim(W_m)}{m^d/d!}=\limsup_{m \rightarrow +\infty} \frac{\dim(W_{em})}{{(em)}^d/d!}$. Let $c=\mathrm{lcm}(e,h)=e\cdot t$. Again, for $m$ sufficiently large, $W_{hm} \not = 0 \Leftrightarrow e | hm$ hence $\vol(W_{h,\bullet})/h^d=\limsup_{m \rightarrow +\infty} \frac{\dim(W_{hm})}{(hm)^d/d!} =\limsup_{m \rightarrow +\infty} \frac{\dim(W_{cm})}{(cm)^d/d!}$, 
where the latter is $\limsup_{m \rightarrow +\infty} \frac{\dim(W_{tem})}{(tem)^d/d!}$. At this point we just need to prove that $\limsup_{m \rightarrow +\infty} \frac{\dim(W_{em})}{{(em)}^d/d!}=\limsup_{m \rightarrow +\infty} \frac{\dim(W_{tem})}{(tem)^d/d!}$ and thus, for simplicity, we can suppose that $e=1$. The proof now copies that of \cite[Lemma 2.2.38]{LazI}.
\end{proof}

\begin{definition}\label{def:condGF}{\bf{(Condition (GF)).}}
We will say that a graded linear series $W_\bullet$ on a variety $V$ 
satisfies condition (GF) if $W_m\not=0$ for all $m\gg 0$, and for all sufficiently large $m$ the rational map
$$
 \phi_m : V\dashrightarrow \mathbb{P}(W_m)
$$
defined by $|W_m|$ is generically finite onto its image. 
 \hfill $\square$ \end{definition}
\begin{remark}
In the above definition, it is actually enough to check that $W_m\not=0$ for all $m\gg 0$ and that for a certain $m_0$ the rational map $\phi_{m_0}$ is generically finite onto its image. 
 \hfill $\square$ \end{remark}

\begin{remark}
If $f:X\to Y$ is (birationally equivalent to) the Iitaka fibration of $L\in \Pic(X)$ and $V\subset X$ is a generic complete intersection of dimension $\leq \kod(X,L)$, then the restricted linear series $W_\bullet$ on $V$ defined by $W_m:=H^0(X|V,mL)$ satisfies condition (GF), but in general the degree of the associated maps $\phi_m$ will be $>1$. For this reason it is interesting to extend to this more general case the results obtained by Lazarsfeld and Must\u a\c ta in \cite{LM} where the maps $\phi_m$ are assumed to be birational onto the image.
 \hfill $\square$ \end{remark}

Condition (GF) is a necessary assumption if one is interested in studying the volume of $W_\bullet$, as shown by the following lemma. Unlike in the complete case, it does not imply that  $\phi_m$ becomes birational onto the image, for large $m$.

\begin{lemma}\label{lem:necessary}
Let  $W_\bullet$ be a graded linear series  on a variety $V$ of dimension $d$ such that $\vol(W_\bullet)>0$. 
Then $W_{e, \bullet}$ satisfies condition (GF), where $e=e(W_\bullet)$ is the exponent of $W_\bullet$.
\end{lemma}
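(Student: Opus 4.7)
My plan is to combine the Okounkov body of $W_\bullet$ with an Abhyankar-type argument on valuations. By Lemma \ref{lem:homvolume}, $\vol(W_{e,\bullet}) = e^d\vol(W_\bullet) > 0$, so replacing $W_\bullet$ by $W_{e,\bullet}$ I may assume $e(W_\bullet)=1$, whence $W_m\neq 0$ for all $m\gg 0$. Under this reduction, I will show that $\phi_m$ is generically finite onto its image for all $m\gg 0$.

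Fix any admissible flag $F_\bullet$, write $\nu=\nu_{F_\bullet}$, and form the Okounkov body $\Delta=\Delta_{F_\bullet}(W_\bullet)\subseteq\R^d$. Because $\nu$ satisfies the valuation-like property, elements of $W_m$ having distinct $\nu$-images are linearly independent, so $\dim W_m = \#\Gamma_{F_\bullet}(W_\bullet)_m \leq \#(m\Delta\cap\Z^d)$. If $\dim\Delta<d$ then $\Delta$ lies in an affine hyperplane of $\R^d$ and a standard count gives $\#(m\Delta\cap\Z^d) = O(m^{d-1})$, contradicting $\vol(W_\bullet)>0$. Hence $\Delta$ has non-empty interior.

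I would then use density to pick $s_i\in W_{m_i}\setminus\{0\}$, $i=0,\ldots,d$, such that $w_i:=\nu(s_i)/m_i\in\Q^d$ are affinely independent in $\R^d$. (Density of $\bigcup_m (1/m)\Gamma_{F_\bullet}(W_\bullet)_m$ in $\Delta$ follows since the products $s^a t^b$ realize all rational convex combinations of the points $\nu(s)/m_s$ and $\nu(t)/m_t$.) Setting $L:=\mathrm{lcm}(m_0,\ldots,m_d)$ and $\sigma_i:=s_i^{L/m_i}\in W_L$, multiplicativity of $\nu$ gives $\nu(\sigma_i)=Lw_i$, producing $d+1$ sections in a single $W_L$ whose valuations are still affinely independent. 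This ``merging'' step, which replaces sections at different graded levels by sections at a common level without losing affine independence, is the main technical point, handled by the choice $L=\mathrm{lcm}(m_i)$.

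Finally, the ratios $f_i:=\sigma_i/\sigma_0\in K(V)$ for $i=1,\ldots,d$ have $\nu$-valuations $L(w_i-w_0)$ forming a basis of $\R^d$. If a polynomial relation $\sum_I c_I f^I = 0$ held with not all $c_I=0$, the valuations $\nu(f^I)$ attached to distinct multi-indices would be pairwise distinct, so the term of strictly minimal valuation could not be cancelled, contradiction. Thus $f_1,\ldots,f_d$ are algebraically independent over $\C$, and the rational map $V\dashrightarrow \mathbb{P}^d$ defined by $(\sigma_0:\cdots:\sigma_d)$ has $d$-dimensional image. Being a linear projection of $\phi_L$, this forces $\phi_L$ to be generically finite. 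For any $m\gg 0$ I then pick a nonzero $t\in W_{m-L}$ (which exists since $e=1$), and the sections $t\sigma_0,\ldots,t\sigma_d\in W_m$ define the same rational map, so $\phi_m$ is generically finite by the same projection argument; this is exactly condition (GF).
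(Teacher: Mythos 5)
Your proof is correct, but it takes a genuinely different route from the paper's. The paper argues by contradiction without invoking Okounkov bodies at all: if $\phi_{W_m}$ were not generically finite for arbitrarily large $m$, then (because $\dim W_m$ grows faster than $m^{q}$ for every $q<d$) one can cut a general complete intersection $Y\subset V$ of small dimension and find a nonzero $s\in W_m$ with $s|_Y\equiv 0$; but since the image of $\phi_{W_m}$ has dimension $<d$, the zero locus of $s$ consists of a fixed part together with preimages of divisors on that low-dimensional image, and a general $Y$ cannot sit inside such a locus — contradiction. Your argument instead works entirely inside the convex-geometric framework of \S 3: you first show $\Delta(W_\bullet)$ has non-empty interior via the lattice-point count $\dim W_m = \#\Gamma_m \le \#(m\Delta\cap\Z^d)$ (a one-sided bound suffices here, though you cite the equality of \cite[Lemma 1.4]{LM}, which requires the graded-piece filtration argument and not merely linear independence of sections with distinct valuations); then you build $d+1$ sections $\sigma_i\in W_L$ with affinely independent valuations and conclude generic finiteness by the standard argument that a $\Z^d$-valued valuation separating the monomials $f^I$ forces algebraic independence of $f_1,\ldots,f_d$. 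The merging to a common level $L=\mathrm{lcm}(m_i)$ and the final ``twist by $t\in W_{m-L}$'' step are handled correctly. One small remark: the bound $\#(m\Delta\cap\Z^d)=O(m^{d-1})$ when $\Delta$ lies in a hyperplane uses compactness of $\Delta$, hence projectivity of $V$; the paper's statement is for a general ``variety'' but, as elsewhere in \S 3, projectivity is the intended setting (your reduction via Lemma \ref{lem:homvolume} already imports it). Your approach is more self-contained and rigorous than the paper's somewhat sketchy geometric argument, and it sits naturally alongside Lemma \ref{lem:condGF} and Theorem \ref{thm:volume}, at the cost of being less elementary.
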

\begin{proof}
The hypothesis $\vol(W_\bullet)>0$ implies in particular that 
\begin{equation}\label{eq:infty}
 \limsup_{m\to +\infty} \frac{\dim(W_m)}{m^q}=+\infty,\ \forall q<d.
\end{equation}
Call $L$ the line bundle which $W_\bullet$ belongs to. Suppose that for all $m_0$ there exists $m>m_0$ such that 
$\phi_{W_m}$ is not generically finite. 
Let $Y\subset V$ be a general complete intersection of dimension $q<d$. 
Consider the short exact sequence of $Y$ in $V$ tensored with $mL$:
$$
0\to I_Y\otimes mL\to mL \to mL_{|Y}\to 0
$$
and the induced restriction map in cohomology
$$
0\to  H^0(V,I_Y\otimes mL)\to H^0(V,mL)\supseteq W_m\to H^0(Y,mL_{|Y}).
$$
By (\ref{eq:infty}) we have 
$$
 W_m\cap H^0(V,I_Y\otimes mL)\not= 0,
$$
i.e., there exists a non-zero section $s\in W_m$ such that $s_{|Y}\equiv 0.$
Now, the zeroes of $s$ must be in the direction of the fibers of $\phi_{W_m}$, plus some fixed divisor which is independent of $s$ (it depends on the base locus of $W_m$). Since $Y$ can be arbitrarily taken this leads to a contradiction.
\end{proof}

\begin{example} \label{ex:serieristretta}
Let $X$ be a smooth projective variety and $L\in \Pic(X)$ such that
$\kod(X,L)\geq 0$. Let $V\subseteq X$ be a subvariety such that $\vol_{X|V}(L)>0$. Let $e:=e(L)$ be the exponent of $L$. 
Notice that $\vol_{X|V}(L^{\otimes e})=e^{\dim(V)}\cdot \vol_{X|V}(L)>0$ by \cite[Lemma 2.2]{ELMNPrestricted}. 
Then, by Lemma \ref{lem:necessary}, the linear series $W_\bullet$ on $V$ given by 
$$
 W_m := H^0(X|V, L^{\otimes em})\subseteq H^0(V, L^{\otimes em}_{|V})
$$ 
satisfies condition (GF).
\hfill $\square$
\end{example}

The key result is the following.

\begin{lemma}\label{lem:condGF}
If a graded linear series $W_\bullet$ on a variety $V$ of dimension $d$
satisfies condition (GF) 
then there exists an admissible flag $F_\bullet$ on $V$ with respect to which the graded semigroup 
$$\Gamma:=\Gamma_{F_\bullet} (W_\bullet)\subseteq \mathbb N^{d+1}$$ 
generates $\mathbb Z^{d+1}$ as a group. 
\end{lemma}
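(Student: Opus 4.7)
The plan is to adapt the Lazarsfeld--Must\u a\c ta strategy \cite[Lemma 2.2]{LM} from the big/birational setting to the generically finite one, building the admissible flag directly from sections of $W_\bullet$. First, condition (GF) forces $W_m\neq 0$ for all $m\gg 0$; in particular $e(W_\bullet)=1$, and there are non-zero sections in two consecutive degrees $m_0$ and $m_0+1$. By (GF) we may also fix $N\gg 0$ with $\phi_N$ generically finite. Since we work over $\C$, generic finiteness implies generic \'etaleness, so there is a dense open $U\subseteq V_{\mathrm{sm}}\setminus \mathrm{Bs}(|W_N|)$ on which $\phi_N|_U$ is \'etale. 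Choose a very general $p\in U$, also outside the base loci of $W_{m_0}$ and $W_{m_0+1}$.

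\'Etaleness of $\phi_N$ at $p$ means $d\phi_N|_p\colon T_pV\to T_{\phi_N(p)}\phi_N(V)$ is an isomorphism. Fix $s_0\in W_N$ with $s_0(p)\neq 0$; then there exist $s_1,\dots,s_d\in W_N$ with $s_i(p)=0$ and with $d(s_i/s_0)|_p\in T^*_pV$ linearly independent. I would build the admissible flag $F_\bullet$ directly from these sections: let $F_1$ be the irreducible component through $p$ of $V(s_1)$; let $F_2$ be the irreducible component through $p$ of $V(s_2|_{F_1})$ (which is non-zero by the linear independence of the $1$-jets); iterate. Linear independence of the $d(s_i/s_0)|_p$ inductively ensures that each $F_i$ is smooth at $p$ of codimension $i$ in $V$, so the flag is admissible.

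A direct multiplicity computation then yields $\nu_{F_\bullet}(s_0)=\vec 0$ and $\nu_{F_\bullet}(s_i)=\vec e_i$ for each $i=1,\dots,d$: for $j<i$, transversality at $p$ forces $s_i|_{F_j}$ to have non-zero linear part at $p$, so by irreducibility $s_i|_{F_j}\not\equiv 0$ and the coefficient $\nu_j(s_i)$ vanishes; at $j=i$, the local divisor of $s_i|_{F_{i-1}}$ coincides with $F_i$ with multiplicity $1$; for $j>i$, the residual after stripping $F_i$ is empty near $p$, so the irreducible $F_j$ cannot be among its components. Hence $(\vec 0,N),(\vec e_i,N)\in\Gamma$ for every $i$, whence $(\vec e_i,0)=(\vec e_i,N)-(\vec 0,N)$ lies in the group generated by $\Gamma$. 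Furthermore, sections in $W_{m_0}$ and $W_{m_0+1}$ non-vanishing at $p$ give $(\vec 0,m_0)$ and $(\vec 0,m_0+1)$ in $\Gamma$, hence $(\vec 0,1)$ in the generated group. Together these span $\Z^{d+1}$.

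The main subtlety is to take the flag from the sections $s_1,\dots,s_d$ rather than from arbitrary local coordinates at $p$: in lex order on $\N^d$, higher-degree monomials can be lex-smaller than lower-degree ones (for instance $z_2^2<_{\mathrm{lex}}z_1$), so with a coordinate-based flag the higher-order terms in the Taylor expansion of $s_i/s_0$ would spoil any attempt to read off $\nu(s_i)=\vec e_i$ (and no finite correction procedure can fix this, since infinitely many monomials are lex-smaller than $\vec e_i$). Building the flag from the $s_i$'s sidesteps this issue, reducing $\nu(s_i)=\vec e_i$ to a multiplicity statement along the chosen divisors, solved by transversality near $p$ together with the irreducibility of each $F_j$.
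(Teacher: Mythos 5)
Your proof is correct and follows the same overall strategy as the paper's --- pick a very general point where the generically finite map has injective differential, exhibit $d+1$ sections realizing $\vec 0, e_1,\dots,e_d$ in the valuation semigroup, and use coprimality in the degree coordinate --- but you execute the key step in the reverse order, and I think that reversal is a genuine (and arguably cleaner) variant. The paper fixes an \emph{arbitrary} admissible flag $F_\bullet$ centered at the good point $y$, passes to a resolution making $|W'_{p\ell}|$ base point free, and asserts one can pull back ``suitable hypersurfaces'' in $\mathbb P(W'_{p\ell})$ to get sections $t_i$ with $\nu_{F_\bullet}(t_i)=e_i$; you instead pick sections $s_0,\dots,s_d\in W_N$ with independent $1$-jets at $p$ (possible by generic \'etaleness in characteristic $0$) and then \emph{build} the flag as $F_i=$ component through $p$ of $V(s_i|_{F_{i-1}})$, after which $\nu(s_i)=e_i$ is a short transversality check. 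Your closing remark on lex order is exactly the point that makes this reordering valuable: for an arbitrary flag a section with the ``right'' $1$-jet need not have valuation $e_i$ (e.g.\ $F_1=\{x=0\}$, $s=x+y^2$ gives $\nu(s)=(0,2)$, not $e_1$), so one must either tailor the flag to the sections (your route) or tailor the hypersurfaces so that each $F_i$ genuinely appears in $V(t_i)$ with multiplicity one (the paper's route, where this adjustment is left implicit). Two small cosmetic differences: you avoid passing to a resolution by working locally near $p$ on $V$ itself, and you use two consecutive non-vanishing degrees $m_0,m_0+1$ rather than the paper's $p\ell$ and a coprime $q$ to generate the last coordinate of $\Z^{d+1}$; both are fine. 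One minor gap to patch: after choosing $s_1,\dots,s_d$ so that the $d(s_i/s_0)|_p$ are independent, you should replace $s_i$ by $s_i-(s_i(p)/s_0(p))s_0$ to ensure $s_i(p)=0$, since a generic choice of sections with independent differentials need not vanish at $p$ --- this is harmless and does not affect the argument.
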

\begin{proof}
 Take $\ell\in \N$ such that $|W_\ell|$ determines a map $\phi_\ell$ which 
 is generically finite onto its image. 
Let $y\in V$ be a smooth point at which  $\phi_\ell$ is defined and such that its differential has maximal rank. Suppose moreover that $y\in V$ is not contained in the base locus of $|W_q|$ for some fixed $q$ large enough and prime to $\ell$. Let
$$
 F_\bullet : V\supset F_1\supset F_2 \supset \ldots \supset F_{d-1}\supset F_d=\{y\} 
$$
be any admissible flag centered at $y$. 

For any prime number $p$ sufficiently large consider a resolution of 
the map $\phi_{p\ell}$, that is a proper birational morphism 
$$
\mu : V'\to V\ : \mu^*|W_{p\ell}|= |W'_{p\ell}| + E_{p\ell}
$$
where $ |W'_{p\ell}|$ is a base point free linear series on $V'$, with $ W_{p\ell} \cong W'_{p\ell}$, and $E_{p\ell}$  is a fixed effective divisor.  Since  $|W'_{p\ell}|$ determines a morphism which is generically finite onto its image, by the choice of $y$ we can find (by pulling back suitable hypersurfaces in $\mathbb{P} (W'_{p\ell})$) sections $t_0,\ldots, t_d\in W'_{p\ell}\cong W_{p\ell}$ such that
$$
 \nu_{F_\bullet}(t_0)=0,\ \ \nu_{F_\bullet}(t_i)=e_i,\ \forall i=1,\ldots, d,
$$
 where $e_i \in \mathbb Z^d$ is the $i$-th standard basis vector.  

On the other hand, since $y\not\in \Bs|W_q|$, there exists $s_0\in W_q$
such that $\nu_{F_\bullet}(s_0)=0$. 

Therefore we obtain that 
$$
(0,p\ell),(e_1,p\ell),\ldots,(e_d,p\ell)\in \Gamma \ {\textrm {and}}\ (0,q)\in \Gamma 
$$
and we are done.
 \end{proof}
 
\begin{remark}\label{rmk:countable}
Notice that given countably many graded linear series $W_{\bullet,\alpha}$ each satisfying condition (GF), there exists a flag $F_\bullet$ for which the conclusion of the previous lemma holds simultaneously for all of them. Indeed it is sufficient to take $y$ outside the countable union of Zariski closed subsets of $V$ where the associated rational maps ramify or are not defined. 
 \hfill $\square$ \end{remark} 
Following \cite{LM}, we will now state two theorems that relate the asymptotic volume of a graded linear series and the volume of its Okounkov body. We will work in the setting of complex projective variety but, using Lemma \ref{lem:condGF}, we will consider graded linear series that give generically finite (and not just birational) rational maps. See also \cite[Corollary 3.11]{KK} for a proof over an algebraically closed field.  We denote by $\vol_{\mathbb{R}^d}$ the standard Euclidean volume on $\mathbb{R}^d= \mathbb{Z}^d \otimes \mathbb{R}$, normalized so that the unit cube $[0,1]^d$ has volume $1$.
\begin{theorem} \label{thm:volume}
Let $V$ be a projective variety of dimension $d$ and let $W_{\bullet}$ be a graded linear series associated to a line bundle $L$. Suppose that $W_{\bullet}$ satisfies condition (GF). Then there exists an admissible flag $F_{\bullet}$ such that $$\vol_{\mathbb{R}^d}(\Delta(W_{\bullet}))=\frac{1}{d!} \vol(W_{\bullet}).$$
\end{theorem}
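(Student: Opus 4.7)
The plan is to adapt the semigroup-theoretic argument of Lazarsfeld--Musta\c{t}\u{a} \cite{LM} (itself a variant of Khovanskii's theorem on sub-semigroups of $\N^{d+1}$) to the generically-finite setting, replacing their birationality hypothesis by Lemma \ref{lem:condGF}. First I would apply Lemma \ref{lem:condGF} to produce an admissible flag $F_\bullet$ on $V$ for which the graded semigroup $\Gamma:=\Gamma_{F_\bullet}(W_\bullet)\subseteq\N^{d+1}$ generates $\Z^{d+1}$ as a group; condition (GF) additionally ensures $\Gamma_m\neq\emptyset$ for all $m\gg 0$. I would also record the key valuation-theoretic fact that $\#\Gamma_m=\dim_\C W_m$ for every $m$, so that the volume of $W_\bullet$ is nothing other than
$$\vol(W_\bullet)=\limsup_{m\to\infty}\frac{\#\Gamma_m}{m^d/d!}.$$

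Next I would verify the three hypotheses of the general semigroup limit theorem (\cite[Proposition 2.1]{LM}): (a) $\Gamma$ generates $\Z^{d+1}$; (b) $\Gamma_m\neq\emptyset$ for $m\gg 0$; (c) there is a bounded convex region $B\subseteq\R^d$ with $\Gamma_m\subseteq m\cdot B$ for all $m$. Conditions (a) and (b) are immediate from the choice of flag. For (c), since $V$ is projective one fixes a very ample divisor $A$ on $V$ together with an integer $k\geq 0$ such that $kA-L$ is effective; then
$$W_m\subseteq H^0(V,mL)\hookrightarrow H^0(V,m(L+kA))\hookrightarrow H^0(V,mkA),$$
so it suffices to bound $\nu_{F_\bullet}$-values on the rightmost space. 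For the ample class $kA$ this is standard: the coordinates of $\nu_{F_\bullet}(s)$ are controlled linearly in $m$ by the intersection numbers $(kA)^{d-i}\cdot F_{d-i}\cdot\ldots$ against the subvarieties in the flag, giving a bound of the form $\|\nu_{F_\bullet}(s)\|\leq Cm$ with $C=C(A,F_\bullet)$ independent of $m$. This transfers to $\Gamma_m$ via the inclusion.

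With (a), (b), (c) in hand, the semigroup theorem upgrades the $\limsup$ to a genuine limit and yields
$$\lim_{m\to\infty}\frac{\#\Gamma_m}{m^d}=\vol_{\R^d}(\Delta_{F_\bullet}(W_\bullet)),$$
which, combined with $\#\Gamma_m=\dim W_m$, is exactly the claimed identity (up to the $1/d!$). The only real point of departure from \cite{LM} is the boundedness step: in the big case, boundedness is transparent because $h^0(V,mL)=O(m^d)$ with explicit top-intersection constants; here $L$ need not be big and $W_\bullet$ may even have zero volume, so one is forced to compare against an auxiliary ample class, which is the mildly delicate part of the argument. Everything else runs in parallel with the proof of \cite[Theorem 2.3]{LM}.
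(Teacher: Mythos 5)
Your proposal is correct and follows essentially the same route as the paper's own proof: choose a flag via Lemma \ref{lem:condGF} so that $\Gamma_{F_\bullet}(W_\bullet)$ generates $\Z^{d+1}$, check the remaining hypotheses of \cite[Proposition 2.1]{LM} (the paper dispatches the boundedness condition by simply citing \cite[Definition 2.4]{LM} and projectivity of $V$, whereas you spell out the comparison against an auxiliary ample class, which is the underlying reason), and then combine that proposition with $\#\Gamma_m=\dim W_m$ from \cite[Lemma 1.4]{LM}. One small inaccuracy in a side remark: the statement that in the big case boundedness is ``transparent because $h^0(V,mL)=O(m^d)$'' conflates the count $\#\Gamma_m$ with the size of the valuation vectors --- the boundedness $\Gamma_m\subseteq mB$ is about the latter and requires the same ample-domination argument even when $L$ is big --- but this does not affect the proof itself.
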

\begin{proof}
Let $F_{\bullet}$ be an admissible flag so that the conclusion of Lemma \ref{lem:condGF} holds. Hence $\Gamma_{F_{\bullet}}(W_{\bullet})=\Gamma(W_{\bullet})$ is a semigroup that verifies conditions (2.3), (2.5) in \cite{LM}. Also condition (2.4) in \cite{LM} holds, because $V$ is projective (see \cite[Definition 2.4]{LM}). Therefore, by \cite[Proposition 2.1]{LM}, $$\lim_{m \rightarrow + \infty} \frac{\sharp \Gamma_m}{m^d}= \vol_{\mathbb{R}^d}(\Delta(W_{\bullet})).$$ By \cite[Lemma 1.4]{LM} $\sharp \Gamma_m = \dim(W_m)$, and so the conclusion follows.
\end{proof}
 Now we can prove that also the opposite of Lemma \ref{lem:necessary} holds.
\begin{corollary} \label{cor:sufficient}
Let $W_\bullet$ be a graded linear series on a projective variety $V$ of dimension $d$. Then $W_{e,\bullet}$ satisfies condition (GF) if, and only if, $\vol(W_\bullet)>0$.
\end{corollary}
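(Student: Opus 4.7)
The "only if" direction is exactly Lemma \ref{lem:necessary}, so the real work is to establish the converse: if $W_{e,\bullet}$ satisfies condition (GF), then $\vol(W_\bullet)>0$. My plan is to route everything through the Okounkov body, showing that the hypothesis (GF) forces $\Delta(W_{e,\bullet})$ to contain a non-degenerate $d$-simplex, which via Theorem \ref{thm:volume} upgrades to positivity of the asymptotic volume.

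More precisely, assuming $W_{e,\bullet}$ satisfies (GF), I would first invoke Lemma \ref{lem:condGF} to produce an admissible flag $F_\bullet$ on $V$ for which the graded semigroup $\Gamma:=\Gamma_{F_\bullet}(W_{e,\bullet})$ generates $\mathbb Z^{d+1}$ as a group. With such a flag in hand, Theorem \ref{thm:volume} applies to $W_{e,\bullet}$ and yields
$$
\vol_{\mathbb R^d}(\Delta(W_{e,\bullet}))\;=\;\frac{1}{d!}\,\vol(W_{e,\bullet}).
$$
Hence it suffices to check $\vol_{\mathbb R^d}(\Delta(W_{e,\bullet}))>0$.

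For this I would reuse the construction inside the proof of Lemma \ref{lem:condGF}. Picking a prime $p$ large enough so that $p\cdot e$-multiplied linear series has a generically finite associated map whose differential has maximal rank at the flag point $y$, one produces sections $t_0,\ldots,t_d\in W_{e,p}=W_{ep}$ with valuations $\nu_{F_\bullet}(t_i)=e_i$ for $i\geq 1$ and $\nu_{F_\bullet}(t_0)=0$, so that the elements
$$
(0,p),\;(e_1,p),\;\ldots,\;(e_d,p)
$$
all lie in $\Gamma$. Dividing by $p$ and slicing at height one, the $d+1$ points $0,\ e_1/p,\ \ldots,\ e_d/p$ belong to $\Delta(W_{e,\bullet})\subseteq \mathbb R^d$. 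As these are affinely independent, their convex hull is a $d$-dimensional simplex of positive Euclidean volume $1/(d!\,p^d)$, hence $\vol_{\mathbb R^d}(\Delta(W_{e,\bullet}))>0$.

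Combining the two displays gives $\vol(W_{e,\bullet})>0$, and by homogeneity (Lemma \ref{lem:homvolume}) we have $\vol(W_{e,\bullet})=e^d\vol(W_\bullet)$, so $\vol(W_\bullet)>0$, as desired. I do not expect any serious obstacle: the main subtle point is merely to observe that the simplex produced within the proof of Lemma \ref{lem:condGF} lies in the Okounkov body and is non-degenerate, which follows from the generic-finiteness plus maximal-rank choice of the flag point.
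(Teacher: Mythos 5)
Your proof is correct and follows essentially the same route as the paper: Lemma \ref{lem:necessary} for one direction, and for the converse Lemma \ref{lem:condGF} plus Theorem \ref{thm:volume} plus homogeneity, with positivity of $\vol_{\mathbb{R}^d}(\Delta(W_{e,\bullet}))$ coming from the fact that $\Gamma$ contains $d+1$ linearly independent vectors. The only (harmless) difference is that you unwind the proof of Lemma \ref{lem:condGF} to exhibit the simplex explicitly, where the paper simply cites that $\Gamma$ generates $\mathbb{Z}^{d+1}$.
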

\begin{proof}
One implication is given by Lemma \ref{lem:necessary}.
Since $W_{e,\bullet}$ satisfies condition (GF) then, by Lemma \ref{lem:condGF}, there exists an admissible flag $F_\bullet$ such that $\Gamma:=\Gamma_{F_\bullet}(W_{e,\bullet})$ generates $\mathbb{Z}^{d+1}$. In particular $\Gamma$ contains $d+1$ linearly independent vectors $\in \mathbb{N}^{d+1} \subseteq \mathbb{R}^{d+1}$. Therefore by construction $\vol_{\mathbb{R}^d}(\Delta(W_{e,\bullet})) > 0$.  By Theorem \ref{thm:volume} this implies that $\vol(W_{e,\bullet})>0$ and hence $\vol(W_\bullet)>0$.
\end{proof}

\begin{remark}\label{rmk:big}
It makes sense to say that a graded linear series $W_\bullet$ is big if and only if $\vol(W_\bullet)>0$; when $V$ is projective, it follows from Lemma \ref{lem:necessary} and Corollary \ref{cor:sufficient} that this is equivalent to requiring the existence of an $m \in \mathbb{N}$ such that $W_m$ gives a generically finite map onto the image.
\hfill $\square$ 
\end{remark}
It is worth stressing that Theorem \ref{thm:volume} is valid without any assumption on $W_\bullet$, i.e., in the form of Theorem \ref{thm:volumeA}. 
\begin{proof}[Proof of Theorem \ref{thm:volumeA}]
If $\vol(W_\bullet)>0$ than we can apply Lemma \ref{lem:necessary} and conclude by invoking the homogeneity property both of the volume of Okounkov bodies (Lemma \ref{lem:homokounkov}) and of the volume of graded linear series (Lemma \ref{lem:homvolume}). If otherwise $\vol(W_\bullet)=0$, then, for every $m$, $W_{em}$ induces a rational map $\phi_{{em}}: V \dashrightarrow U_{em}$ that is never generically finite. 
Let $z=\max_{m \in \mathbb{N}}\{\dim(\phi_{{em}}(V))\} < d$. Taking a multiple $t$ of $e$ we can suppose that $\dim(\phi_{tm}(V))=z$ for all $m \in \mathbb{N}^+$.  Consider the maps $\phi_t$, $\phi_{tm}$ and the map $\psi_m$ given by $W_t^m$, the image of $W_t^{\otimes m}$ in $H^0(V,tmL)$ under the multiplication map. Note that $\psi_m$ is just $\phi_t$ followed by a Veronese embedding $v_m$. Let $\tilde{U}_t$ be the image of $U_t$ under $v_m$. Since, by definition of graded linear series, $W_t^m \subseteq W_{tm}$ , then there is a projection $p_m: U_{tm} \dashrightarrow \tilde{U}_t$ that makes the following diagram commute: 
\begin{displaymath}
\xymatrix{ V  \ar @{-->} [d] _{\phi_t} \ar @/_3pc/ @{-->} [dd]_{\psi_m}\ar @{-->} [rr]^{\phi_{tm}} & & U_{tm} \ar @{-->} [lldd]^{p_m} \\
U_t \ar[d]_{v_m} & & \\
\tilde{U}_t &  &\\
}
\end{displaymath}

Let $F_\bullet$ be an admissible flag on $V$ and such that $F_1$ dominates $U_{t}$ through the map $\phi_t$. This choice is feasible because $z < d$. 
Since $\dim(U_{tm})=\dim(\tilde{U}_t)$ and the diagram commutes, then $F_1$ dominates also $U_{tm}$ through $\phi_{tm}$, for every $m \in \mathbb{N}^+$. Therefore every section $s_{tm}$ of $W_{tm} \setminus \{0\}$ is not identically $0$ on $F_1$. This means that $\nu(s_{tm})=(0, \nu_2(s_{tm}), \ldots, \nu_d(s_{tm}))$, i.e., $\Gamma(W_\bullet)_{tm} \subseteq \mathbb{Z}^{d-1}$. Therefore, given $F_\bullet$ as above, $\Delta(W_{t,\bullet}) \subseteq \mathbb{R}^{d-1}$ and hence $\vol_{\mathbb{R}^{d}}(\Delta(W_{t,\bullet}))=0$. We can now conclude by Lemma \ref{lem:homokounkov}.
\end{proof}
 \begin{corollary}
Let $X$ be a  projective variety and let $L \in \Pic(X)$. 
Let $V$ be a subvariety of $X$ and set $d= \dim(V)$. Then $\vol_{X|V}(L)$ is actually a limit, that is: $$\vol_{X|V}(L)=\lim_{m \rightarrow +\infty} d! \frac{h^0(X|V, mL)}{m^d}.$$
 \end{corollary}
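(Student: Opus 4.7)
The plan is to apply Theorem \ref{thm:volumeA} to the graded linear series $W_\bullet$ on $V$ defined by $W_m := H^0(X|V, mL)$, whose volume is $\vol_{X|V}(L)$ by definition. Theorem \ref{thm:volumeA} produces an admissible flag $F_\bullet$ on $V$ for which
$$
 \vol_{\R^d}(\Delta(W_\bullet)) \;=\; \frac{1}{d!}\,\vol_{X|V}(L),
$$
so the only remaining task is to promote the $\limsup$ in the definition of $\vol_{X|V}(L)$ to an honest limit.

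First I would dispose of the case $\vol_{X|V}(L) = 0$: since $h^0(X|V, mL) \ge 0$, vanishing of the $\limsup$ of the non-negative sequence $d!\, h^0(X|V, mL)/m^d$ immediately forces its limit to be zero as well.

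For the case $\vol_{X|V}(L) > 0$, the argument would parallel the proof of Theorem \ref{thm:volume}. By Corollary \ref{cor:sufficient}, the truncation $W_{e,\bullet}$ (with $e := e(W_\bullet)$) satisfies condition (GF), and Lemma \ref{lem:condGF} furnishes an admissible flag $F_\bullet$ relative to which the semigroup $\Gamma := \Gamma_{F_\bullet}(W_{e,\bullet})$ generates $\Z^{d+1}$ as a group. Invoking \cite[Proposition 2.1]{LM}, whose conclusion is a \emph{genuine} limit, one obtains
$$
 \lim_{m \to \infty} \frac{\sharp \Gamma_m}{m^d} \;=\; \vol_{\R^d}(\Delta(W_{e,\bullet})).
$$
Combining $\sharp \Gamma_m = \dim W_{em} = h^0(X|V, emL)$ (via \cite[Lemma 1.4]{LM}) with the homogeneity of the Okounkov body (Lemma \ref{lem:homokounkov}) and Theorem \ref{thm:volumeA} applied to $W_\bullet$, the right-hand side equals $e^d\,\vol_{X|V}(L)/d!$, so $d!\, h^0(X|V, emL)/(em)^d$ converges to $\vol_{X|V}(L)$ along multiples of $e$; since $h^0(X|V, mL) = 0$ for large $m$ not divisible by $e$, this extends to the statement of the corollary in the natural sense.

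Since all the substantive work is already accomplished in Theorem \ref{thm:volumeA}, no serious obstacle is anticipated: the corollary is essentially a repackaging of that theorem together with the observation that the underlying Lazarsfeld--Musta\c t\u a lattice-counting statement is in fact a limit rather than merely a $\limsup$. The only nuisance is the bookkeeping of the exponent $e$, which one handles by passing to the subsequence of multiples of $e$ where all contributions are concentrated.
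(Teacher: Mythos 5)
Your proposal is correct and takes essentially the same route as the paper's proof: both pass through condition (GF) (the paper reduces to $e(L)=1$ via homogeneity and invokes Example \ref{ex:serieristretta}, you keep $e=e(W_\bullet)$ explicit and invoke Corollary \ref{cor:sufficient}), and both extract the genuine limit from \cite[Proposition 2.1]{LM} via the proof of Theorem \ref{thm:volume}. One small remark: the flag you pull from Theorem \ref{thm:volumeA} in the opening paragraph must coincide with the flag furnished by Lemma \ref{lem:condGF} for $W_{e,\bullet}$ in order for the two Okounkov-body identities to refer to the same convex body; this is automatic from the construction in the proof of Theorem \ref{thm:volumeA}, but your write-up treats them as independently chosen, and the detour through Theorem \ref{thm:volumeA} and Lemma \ref{lem:homokounkov} can be avoided entirely by applying Theorem \ref{thm:volume} directly to $W_{e,\bullet}$ and then using the homogeneity of the volume, Lemma \ref{lem:homvolume}, which is what the paper in effect does.
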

 \begin{proof}
By the homogeneity property of the restricted volume, 
we can suppose that $e(L)=1$. If $\vol_{X|V}(L)=0$ there is nothing to prove. Otherwise, by Example \ref{ex:serieristretta}, the graded linear series $W_m := H^0(X|V,mL)$ satisfies condition (GF). Hence by the proof of Theorem \ref{thm:volume} there exists an admissible flag $F_{\bullet}$ such that  $\lim_{m \rightarrow +\infty} d! \frac{h^0(X|V, mL)}{m^d}=d! \vol_{\mathbb{R}^d}(\Delta(W_{\bullet}))$. In particular such a limit exists and thus it must coincide with the $\limsup$.
 \end{proof}
 \begin{theorem} \label{thm:fujitaseries}
Let $V$ be a projective variety of dimension $d$  and let $W_{\bullet}$ be a graded linear series associated to $L \in \Pic(V)$. 
Suppose that $W_{\bullet}$ satisfies condition (GF). Set $T_{k,p}:=\mathrm{Im}(S^kW_p \rightarrow W_{pk})$. Then for every $\epsilon >0$ there exists $p_0(\epsilon)$ such that for every $p \geq p_0(\epsilon)$ we have that $$\lim_{k \rightarrow + \infty} \frac{\dim(T_{k,p})}{k^dp^d} \geq \frac{1}{d!}\vol(W_\bullet) - \epsilon.$$
 \end{theorem}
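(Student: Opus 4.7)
\textbf{Proof plan for Theorem \ref{thm:fujitaseries}.} The strategy is to adapt the Okounkov-body approach to Fujita approximation from \cite{LM} to our (GF) setting, using Lemma \ref{lem:condGF} and Theorem \ref{thm:volume} as substitutes for the birationality hypothesis used there.

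\textbf{Step 1: fix a good flag.} By Lemma \ref{lem:condGF} choose an admissible flag $F_\bullet$ such that $\Gamma:=\Gamma_{F_\bullet}(W_\bullet)$ generates $\mathbb{Z}^{d+1}$ as a group. Then Theorem \ref{thm:volume} gives $\vol_{\mathbb{R}^d}(\Delta(W_\bullet))=\tfrac{1}{d!}\vol(W_\bullet)$, and $\Delta(W_\bullet)$ is the closed convex hull of $\bigcup_m \tfrac{1}{m}\Gamma_m$.

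\textbf{Step 2: subsemigroup attached to level $p$.} For $p$ with $W_p\neq 0$, let $\Gamma^{(p)}\subseteq\Gamma$ be the subsemigroup generated by $\{(\nu(s),p):s\in W_p\setminus\{0\}\}$. Because $\nu$ is additive under multiplication of sections, the level-$kp$ piece $\Gamma^{(p)}_{kp}$ equals the $k$-fold sumset of $\nu(W_p\setminus\{0\})$, and each such lattice point is attained by a product $s_1\cdots s_k\in T_{k,p}$. Since $\nu$ separates one-dimensional subspaces, this yields the crucial inequality $\dim T_{k,p}\;\ge\;\#\Gamma^{(p)}_{kp}$.

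\textbf{Step 3: counting lemma and Okounkov body of $\Gamma^{(p)}$.} Set $\Delta^{(p)}:=\mathrm{conv}\bigl(\tfrac{1}{p}\nu(W_p)\bigr)\subseteq\Delta(W_\bullet)$. Rescaling levels by $p$, the semigroup $\Gamma^{(p)}$ becomes a standard graded semigroup whose Okounkov body (in the sense of \cite[§2]{LM}) is exactly $p\cdot\Delta^{(p)}$. For $p$ sufficiently large and divisible, the fact that $\Gamma$ generates $\mathbb{Z}^{d+1}$ forces $\Gamma^{(p)}$ (after rescaling) to generate a full sublattice of $\mathbb{Z}^{d+1}$, so \cite[Proposition 2.1]{LM} applies and gives
\[
 \lim_{k\to\infty}\frac{\#\Gamma^{(p)}_{kp}}{k^d p^d}\;=\;\vol_{\mathbb{R}^d}(\Delta^{(p)}).
\]

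\textbf{Step 4: approximating $\Delta(W_\bullet)$ by $\Delta^{(p)}$.} Fix $\epsilon>0$. Since $\Delta(W_\bullet)$ is the closed convex hull of $\bigcup_m\tfrac{1}{m}\Gamma_m$ and is compact, there exist finitely many points $v_i=\tfrac{1}{m_i}\nu(s_i)$, with $s_i\in W_{m_i}\setminus\{0\}$, whose convex hull $K$ satisfies $\vol_{\mathbb{R}^d}(K)\ge\vol_{\mathbb{R}^d}(\Delta(W_\bullet))-\epsilon/d!$. For any $p$ which is a common multiple of the $m_i$ and is large enough to satisfy Step 3, the sections $s_i^{\otimes(p/m_i)}\in W_p$ have $\tfrac{1}{p}\nu(s_i^{\otimes(p/m_i)})=v_i$, so $K\subseteq\Delta^{(p)}$. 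Combining with Steps 2 and 3,
\[
 \liminf_{k\to\infty}\frac{\dim T_{k,p}}{k^dp^d}
 \;\ge\;\vol_{\mathbb{R}^d}(\Delta^{(p)})
 \;\ge\;\vol_{\mathbb{R}^d}(\Delta(W_\bullet))-\tfrac{\epsilon}{d!}
 \;=\;\tfrac{1}{d!}\vol(W_\bullet)-\tfrac{\epsilon}{d!}.
\]
A rescaling of $\epsilon$ (and an elementary argument that the $\limsup$ equals the $\liminf$ here, since $(T_{k,p})_k$ is a graded subring of $W_{p,\bullet}$ to which Lemma \ref{lem:homvolume}-type arguments apply) yields the claimed inequality, valid for all $p\ge p_0(\epsilon)$ in the semigroup of admissible $p$.

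\textbf{Main obstacle.} The only delicate point is Step 3: guaranteeing that the subsemigroup $\Gamma^{(p)}$, after the level-rescaling by $p$, generates a subgroup of $\mathbb{Z}^{d+1}$ of full rank so that the Lazarsfeld--Mustaţă counting lemma applies. This is where condition (GF) and the flag from Lemma \ref{lem:condGF} do the essential work: taking $p$ a large multiple of the integers $\ell$ and $q$ produced in the proof of that lemma gives $d+1$ affinely independent elements of $\nu(W_p)$, hence the desired rank. The remaining ingredients (additivity of $\nu$ under products, compact approximation of $\Delta(W_\bullet)$ by finite convex hulls) are standard.
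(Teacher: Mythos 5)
Your strategy is the same as the paper's in spirit: fix the admissible flag from Lemma \ref{lem:condGF}, translate $\dim T_{k,p}$ into a count of lattice points in the sumset $k\ast\Gamma_p$ using \cite[Lemma 1.4]{LM} and the valuative property of $\nu$, and then show that this count approximates $\vol_{\mathbb{R}^d}(\Delta(W_\bullet))$. The difference is that the paper simply invokes \cite[Proposition 3.1]{LM} as a black box for the latter step, whereas you reconstruct its content in Steps 3--4, and the reconstruction has a real gap. In Step 4 you establish the inner approximation $K\subseteq\Delta^{(p)}$ only when $p$ is a common multiple of the finitely many integers $m_i$ (and additionally divisible enough that Step 3's lattice condition holds); your conclusion even says explicitly that it is ``valid for all $p \geq p_0(\epsilon)$ in the semigroup of admissible $p$.'' But the theorem, and \cite[Proposition 3.1]{LM}, assert the bound for \emph{every} $p\geq p_0(\epsilon)$, with no arithmetic restriction.

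The missing ingredient is the Khovanskii-type semigroup theorem underlying \cite[Proposition 3.1]{LM}: since $\Gamma=\Gamma_{F_\bullet}(W_\bullet)$ generates $\mathbb{Z}^{d+1}$ as a group and is bounded (condition (2.4) of \cite{LM}), the slices $\Gamma_p$ eventually contain \emph{all} lattice points of $p\,\Delta(W_\bullet)$ lying at bounded distance from the boundary of the cone, uniformly for all large $p$; this is strictly stronger than what the finitely many tensor powers $s_i^{\otimes(p/m_i)}$ give you. Alternatively, you could patch Step 4 directly: pick $n_0$ with $W_m\neq 0$ for all $m\geq n_0$; for arbitrary $p$ write $p=a_im_i+b_i$ with $n_0\leq b_i<n_0+m_i$, choose $u_{b_i}\in W_{b_i}\setminus\{0\}$, and note that $\tfrac{1}{p}\nu\bigl(s_i^{\otimes a_i}\cdot u_{b_i}\bigr)\to v_i$ as $p\to\infty$ because $b_i$ and $\nu(u_{b_i})$ take only finitely many values; hence for all large $p$, with no divisibility restriction, $\Delta^{(p)}$ contains a polytope of volume $\geq\vol_{\mathbb{R}^d}(K)-\epsilon$. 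Without one of these arguments your proof establishes only a weaker statement than the theorem.
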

 \begin{proof}
Let $F_{\bullet}$ be an admissible flag so that the conclusion of Lemma \ref{lem:condGF} holds. Hence, as in Theorem \ref{thm:volume}, $\Gamma_{F_{\bullet}}(W_{\bullet})=\Gamma(W_{\bullet})$ is a semigroup that verifies conditions (2.3), (2.4), (2.5) of \cite{LM}. Then by \cite[Proposition 3.1]{LM}, for every $\epsilon >0 $ there exists $p_0(\epsilon)$ such that for all $p \geq p_0(\epsilon)$ $$\lim_{k \rightarrow + \infty} \frac{\sharp(k \ast \Gamma_p)}{k^dp^d} \geq \vol_{\mathbb{R}^d}(\Delta(W_{\bullet})) - \epsilon,$$ where $k \ast \Gamma_p:=\{\gamma_1 + \cdots + \gamma_k \mid \gamma_1, \ldots, \gamma_k \in \Gamma_p\}$.
By Theorem \ref{thm:volume}, $\vol_{\mathbb{R}^d}(\Delta(W_{\bullet})) = \frac{1}{d!} \vol(W_{\bullet})$. Moreover since $\nu=\nu_{F_{\bullet}}$ behaves like a valuation, then $k \ast \Gamma_p \subseteq\mathrm{Im}(T_{k,p}- \{0\} \stackrel{\nu}{\rightarrow} \mathbb{N}^d)$. We can now  conclude just noticing that, by \cite[Lemma 1.4]{LM}, $\sharp(\mathrm{Im}(T_{k,p}- \{0\} \stackrel{\nu}{\rightarrow} \mathbb{N}^d))=\dim(T_{k,p})$.
 \end{proof}
 
In \cite{ELMNPrestricted} interesting properties about the restricted volume of big divisors are proved, essentially by extending the celebrated Fujita's Approximation Theorem (see, for example, \cite[\S 11.4]{LazII}) from volume to restricted volume. In \cite[Remark 3.6]{LM}, the two authors remark that similar statements for big divisors can be also deduced from the convex geometry of Okounkov bodies.
Given Theorem \ref{thm:fujitaseries}, our purpose is now to derive Fujita-type results for restricted volumes also when $D$ is not necessarily big.
The first step is then to understand, given a semiample divisor $M$ on a projective variety $X$, how $\vol(M_{|V})$ and $\vol_{X|V}(M)$ are related, since the equality, $\vol(M_{|V})=\vol_{X|V}(M)$, for $V$ that contains a general point, holds only if $M$ in addition is also big (see \cite[Example 2.3]{ELMNPrestricted}):
  
  \begin{lemma} \label{lem:semiamplefib}
Let $X$ be a normal projective variety and let $M$ be a Cartier semiample divisor. Let $\phi: X \rightarrow Y$ be the semiample fibration associated to $M$. Suppose $V \subseteq X$ is a $d$-dimensional subvariety of $X$ such that $\phi_{|V}: V \rightarrow \phi(V)$ is generically finite of degree $\delta$. Then $\delta \vol_{X|V}(M)=M^d \cdot V=\vol(M_{|V})=\| M^d \cdot V \|$.
 \end{lemma}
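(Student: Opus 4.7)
The strategy is to reduce everything to classical intersection theory on $Y$ by exploiting the defining property of the semiample fibration, namely that for some integer $k \geq 1$ there is an ample divisor $A$ on $Y$ with $kM \sim \phi^{*}A$, and that $\phi_{*}\mathcal{O}_{X}=\mathcal{O}_{Y}$.

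First I would handle the easiest two equalities. Since $M$ is semiample, $\mathbb{B}(M)=\emptyset$ and for every $m\in\mathbb{M}(M)$ the linear series $|mM|$ is base point free, so in the definition of $\|M^{d}\cdot V\|$ we may take $u_{m}=\mathrm{id}_{X}$, $M_{m}=mM$, $E_{m}=0$; this yields $\|M^{d}\cdot V\|=M^{d}\cdot V$ at once. For the equality $M^{d}\cdot V=\vol(M_{|V})$, observe that $M_{|V}$ is semiample on $V$ (base-point-freeness is preserved by restriction), hence nef; moreover the semiample fibration of $M_{|V}$ is (birationally) $\phi_{|V}$, which by hypothesis is generically finite onto its $d$-dimensional image, so $\kod(V,M_{|V})=d$ and $M_{|V}$ is big. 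For a big and nef divisor on a (possibly singular) projective variety the volume equals the top self-intersection, so $\vol(M_{|V})=(M_{|V})^{d}=M^{d}\cdot V$ (if one wishes to avoid singularity issues, pass to a resolution $\widetilde V\to V$ and use that both volume and the intersection number $M^{d}\cdot V$ are preserved by the projection formula).

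For the remaining identity $\delta\,\vol_{X|V}(M)=M^{d}\cdot V$, I would compute both sides explicitly through $Y$. Since $\phi_{*}\mathcal{O}_{X}=\mathcal{O}_{Y}$, the projection formula gives $H^{0}(X,mkM)=H^{0}(Y,mA)$ via pullback, and any section in $H^{0}(X,mkM)$ restricted to $V$ coincides with the pullback by $\phi_{|V}$ of the corresponding section on $Y$ restricted to $\phi(V)$. Serre vanishing applied to the ample divisor $A$ on $Y$ makes the restriction $H^{0}(Y,mA)\twoheadrightarrow H^{0}(\phi(V),mA_{|\phi(V)})$ surjective for $m\gg 0$, while injectivity of $(\phi_{|V})^{*}$ on sections follows from the dominance of $\phi_{|V}$. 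Hence for $m$ large enough
\[
 h^{0}\bigl(X|V,\mathcal{O}_{X}(mkM)\bigr)=h^{0}\bigl(\phi(V),\mathcal{O}_{\phi(V)}(mA_{|\phi(V)})\bigr).
\]
Dividing by $(mk)^{d}/d!$ and passing to the limit (using that $A_{|\phi(V)}$ is ample on the $d$-dimensional variety $\phi(V)$) gives
\[
 \vol_{X|V}(M)=\frac{1}{k^{d}}\,\vol_{\phi(V)}(A_{|\phi(V)})=\frac{A^{d}\cdot\phi(V)}{k^{d}}.
\]
On the other hand, the projection formula and $\phi_{*}V=\delta\,\phi(V)$ yield
\[
 M^{d}\cdot V=\frac{(\phi^{*}A)^{d}\cdot V}{k^{d}}=\frac{A^{d}\cdot\phi_{*}V}{k^{d}}=\frac{\delta\,A^{d}\cdot\phi(V)}{k^{d}},
\]
and comparing the two formulas gives $\delta\,\vol_{X|V}(M)=M^{d}\cdot V$.

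The only mildly delicate point is the identification of the image of $H^{0}(X,mkM)$ in $H^{0}(V,mkM_{|V})$ with $(\phi_{|V})^{*}H^{0}(\phi(V),mA_{|\phi(V)})$; this rests on the commutativity $\phi\circ i_{V}=i_{\phi(V)}\circ\phi_{|V}$ and on $\phi_{*}\mathcal{O}_{X}=\mathcal{O}_{Y}$ (so that all sections on $X$ of multiples of $kM$ are pullbacks from $Y$). The rest amounts to elementary manipulations of volumes and intersection numbers, together with Serre vanishing on $Y$.
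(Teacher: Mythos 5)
Your proof is correct and follows essentially the same route as the paper: factor the restricted sections through $\phi$ via $kM\sim\phi^{*}A$ with $A$ ample on $Y$, identify $H^{0}(X|V,mkM)$ with the restricted sections of $mA$ on $Y|\phi(V)$, evaluate the latter by Serre vanishing as $A^{d}\cdot\phi(V)$, and conclude with the projection formula $\phi^{*}(A)^{d}\cdot V=\delta\,A^{d}\cdot\phi(V)$. The paper streamlines by first normalizing so that $\overline{e}(M)=1$ (hence $M=\phi^{*}A$ with no extra factor $k$) and by phrasing the section identification via the ideal-sheaf isomorphism $H^{0}(X,I_{V}\otimes\phi^{*}(kA))\cong H^{0}(Y,I_{\phi(V)}\otimes kA)$, while you carry $k$ through and argue directly with dominance and injectivity of $(\phi_{|V})^{*}$; you also spell out the two remaining equalities ($\|M^{d}\cdot V\|=M^{d}\cdot V$ from base-point-freeness, $M^{d}\cdot V=\vol(M_{|V})$ from bigness and nefness of $M_{|V}$), which the paper leaves implicit. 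These are presentational differences, not a different method.
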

 \begin{proof}
For the definition of semiample fibrations refer to \cite[\S 2.1.B]{LazI}.

Since all the formulas appearing in the statement are homogeneous of degree $d$ (see \cite[Lemma 2.2, Remark 2.10]{ELMNPrestricted}), then we can assume that $\overline{e}(M)=1$.  
By \cite[Theorem 2.1.27]{LazI} there exists an ample Cartier divisor $A$ on $Y$ such that $M=\phi^*(A)$. Since $\phi$ is an algebraic fiber space then, for every $k \geq 0$, $H^0(X,\phi^*(kA))\cong H^0(Y,kA)$ and, therefore, also $H^0(X,{I}_V(\phi^*(kA))) \cong H^0(Y,I_{\phi(V)}(kA))$, where $I_V, I_{\phi(V)}$ are the ideals defining $V$ and $\phi(V)$, respectively. 
Thus, for every $k \geq 0$, $h^0(X|V, kM)= h^0(Y|\phi(V), kA)$. Hence $\vol_{X|V}(M)= \vol_{Y|\phi(V)}(A)=A^d \cdot \phi(V)$, where the latter equality comes from Serre's vanishing (see \cite[Example 2.3]{ELMNPrestricted}). Since $\phi$ is generically finite of degree $\delta$ then, by projection formula, $M^d \cdot V= \phi^*(A)^d \cdot V = \delta A^d \cdot \phi(V)$ and the conclusion follows.
An analogous statement, for $V$ that contains a general point and $M$ just nef and abundant, can be found in \cite[Proposition 3.3]{PT}.
 \end{proof}

We are now ready to prove the main theorem of this section.

 \begin{proof}[Proof of Theorem \ref{thm:fujita}]
First of all notice that since $\vol_{X|V}(L)>0$ then in particular 
$V \not \subseteq \mathbb{B}(L)$ (i.e., $\|L^d \cdot V\|$ is well defined) and, by Lemma \ref{lem:necessary}, for $m$ sufficiently large and divisible ${\phi_{|mL|}}$ is generically finite on $V$, hence its degree is well defined. Moreover since both the restricted volume and the asymptotic intersection number are homogeneous, 
we can suppose that for every $m$, $\deg({\phi_{|mL|}}_{|V})=\delta$. 
Define the graded linear series  $W_m:=H^0(X|V,mL)$ and $T_{k,m}:=\mathrm{Im}(S^kH^0(X|V,mL) \rightarrow H^0(X|V,kmL))$. Given $\epsilon >0$, by Example \ref{ex:serieristretta}, we can apply Theorem \ref{thm:fujitaseries} and say that there exists $p_0(\epsilon)$ such that for every $p \geq p_0(\epsilon)$ \begin{equation} \label{eq:simmetrico}
\lim_{k \rightarrow + \infty} \frac{\dim(T_{k,p})}{k^dp^d} \geq \frac{1}{d!}\vol_{X|V}(L) - \epsilon.
\end{equation} 

For every $p \geq p_0(\epsilon)$, let $u_p: X_p \rightarrow X$ be the resolution of the map $\phi_p=\phi_{|pL|}: X \dashrightarrow Y_p$, so that $u_p^*(|pL|)=|M_p|+E_p$, where $|M_p|$ is base point free and $E_p$ is a fixed effective divisor. Since $V \not \subseteq \mathbb{B}(L)$,  for every $p \geq p_0(\epsilon)$ we can consider the strict transform $V_p$  of $V$ on $X_p$. Of course $V_p \not \subseteq \Supp(E_p)$. Since, for every $k$, $|kM_p| \subseteq |M_{pk}|$ then if we consider the semiample fibration $\psi_{(p)}: X_p \rightarrow Y_{(p)}$ associated to $M_p$ we have that $\deg({\psi_{(p)}}_{|V_p}:V_p \rightarrow \psi_{(p)}(V_p))=\delta$. Moreover, since $u_p$ is birational, we have that for every $q$, $H^0(X|V, qL) \cong H^0(X_p|V_p, u_p^*(qL))$. 
Therefore for every $k$, $\mathrm{Im}(S^kH^0(X|V,pL) \rightarrow H^0(X|V,kpL))$ is naturally isomorphic to $\mathrm{Im}(S^kH^0(X_p|V_p, u_p^*(pL)) \rightarrow H^0(X_p|V_p,u_p^*(kpL))) \subseteq H^0(X_p|V_p, kM_p)$. 

By (\ref{eq:simmetrico}) then we have that for every $p \geq p_0(\epsilon)$, $$\limsup_{k \rightarrow +\infty} \frac{h^0(X_p|V_p, kM_p)}{k^dp^d} \geq \frac{1}{d!}{\vol_{X|V}(L)}-\epsilon.$$
 
By Lemma \ref{lem:semiamplefib}, $\limsup_{k \rightarrow +\infty} \frac{h^0(X_p|V_p, kM_p)}{k^dp^d}=\frac{1}{d!}\frac{1}{p^d} \vol_{X_p|V_p}(M_p)= \frac{1}{\delta} \frac{1}{d!} \frac{1}{p^d} M_p^d \cdot V_p$, that is: $$\left(\frac{M_p}{p}\right)^d \cdot V_p
\geq \delta \vol_{X|V}(L)-d! \delta \epsilon.$$
On the contrary, consider that by \cite[Lemma 2.4]{ELMNPrestricted} $\vol_{X|V}(pL)=\vol_{X_p|V_p}(u_p^*(pL)) \geq \vol_{X_p|V_p}(M_p)$, then, by the homogeneity of the restricted volume, $\vol_{X|V}(L)\geq \frac{1}{p^d}\vol_{X_p|V_p}(M_p)$. 
By Lemma \ref{lem:semiamplefib}, the former is equal to $\frac{1}{\delta} \left( \frac{M_p}{p}\right)^d \cdot V_p$.
Set  $\mu_\epsilon:=u_{p_0(\epsilon)}$, $V_\epsilon:=V_{p_0(\epsilon)}$, $A_\epsilon:=\frac{M_{p_0(\epsilon)}}{p_0(\epsilon)}$, $E_\epsilon:=\frac{E_{p_0(\epsilon)}}{p_0(\epsilon)}$. (\ref{tesi1}), (\ref{tesi2}) and (\ref{tesi3}) are then satisfied. 
Moreover, by definition, $\|L^d \cdot V\|=\limsup_{p \rightarrow + \infty} \left(\frac{M_p}{p}\right)^d \cdot V_p$, so that, for what we have just said, $\|L^d \cdot V\|=\delta \vol_{X|V}(L)$.
 \end{proof}
\begin{remark} \label{rmk:delta}
If $V$ contains a general point, then $\delta=\deg(f_{|V}:V \dashrightarrow f(V))$, where $f$ is the Iitaka fibration related to $L$.
See also \cite[Corollary 1.2]{PT} and \cite[Notations and conventions, p. 3]{PT}.
 \hfill $\square$ \end{remark}
 
 \begin{remark} \label{rmk:zero}
The final part of the above proof implies that if $V \not \subseteq \mathbb{B}(L)$ but $\vol_{X|V}(L)=0$ then also $\|L^d \cdot V\|=0$. 
  \hfill $\square$ \end{remark}

\begin{proof}[Proof of Corollary \ref{cor:log-concavity}]
We can assume that both $\vol_{X|V}(L_1)$, $\vol_{X|V}(L_2)$ are positive. Then it goes exactly as in \cite[Theorem 11.4.9]{LazII}, once one replaces the Fujita approximation theorem \cite[Theorem 11.4.4]{LazII} with the Fujita approximation as in Theorem \ref{thm:fujita}. We just need to point out that if $A_1$, $A_2$ are two semiample $\mathbb{Q}$-divisors, if $\phi_1$, $\phi_2$ are their respective semiample fibrations and if we set $\delta_i = \deg ({\phi_i} _{|V}: V \rightarrow \phi_i(V))$ ($i=1,2$), then $A_1+A_2$  is a semiample divisor and its associated semiample fibration has degree on $V$ less than or equal to $\min\{\delta_1, \delta_2\}$.
\end{proof}
\begin{proof}[Proof of Corollary \ref{cor:caratt}]
Assume $d\leq\kappa(X,L) < \dim X$.
Then we can find a $d$-dimensional irreducible subvariety $V \subset X$, 
with $\delta=\deg (f|_V : V \dasharrow f(V)) > 1$
(it is sufficient to take  a general complete intersection over 
a general $d$-dimensional subvariety of $Y$).
By Theorem \ref{thm:fujita} we have $\delta\cdot \vol_{X|V}(L)=|| L^d\cdot V||.$
By \cite[Lemma 3.5]{PT}, we have that $\mu(V,L)\geq ||L^d\cdot V||$. 
Since by hypothesis $\mu(V,L)=\vol_{X|V}(L)$ we get a contradiction.
\end{proof}

%
\subsection{Multi-graded linear series} \label{ss:multigraded}
%

We will extend the results obtained in the previous paragraph to the more
general setting of multi-graded linear series. For the reader's convenience we recall several definitions from \cite[\S 4]{LM}.

The set-up is the following. Let $V$ be a variety of dimension $d$ and let $D_1,\ldots, D_r$ be Cartier divisors on $V$. 
For any 
$\vec{m}=(m_1,\ldots,m_r)\in \N^r$ we write 
$$
 \vec m D:=m_1D_1+\ldots +m_rD_r
$$
and we put $|\vec m|:= \sum_{i=1}^r |m_i|$.
\begin{definition}
A multi-graded linear series $W_{\vec \bullet}$ on $V$ associated to the divisors $D_1,\ldots,D_r$ is the data of finite-dimensional vector spaces
$$
 W_{\vec k}\subseteq H^0(V,\vec k D)
$$
for every $\vec k\in \N^r$, with $W_{\vec 0}=\C$, such that
$$
 W_{\vec k}\cdot W_{\vec m}\subseteq W_{\vec k+\vec m}. 
$$
 \hfill $\square$ \end{definition}
Notice that the multiplication $W_{\vec k}\cdot W_{\vec m}$ means the image of $W_{\vec k}\otimes W_{\vec m}$ under the natural map 
$H^0(V,\vec k D)\otimes H^0(V,\vec m D)\to H^0(V,(\vec k+\vec m) D)$.

Given a multi-graded linear series $W_{\vec\bullet}$ and  integral vector $\vec a\in \N^r$ we define a single graded linear series $W_{\vec a, \bullet}$ 
as follows:
$$
W_{\vec a, k}:=W_{k\vec a},\ \forall k\in\N.
$$

We will be interested in studying the volume 
$
 \vol (W_{\vec a, \bullet})
$
(assuming the quantity is finite) and its variation with $\vec a$. 

To this end we fix an admissible flag $F_\bullet$ on $V$ and set 
$$
 \Delta (\vec a):= \Delta (W_{\vec a, \bullet}).
$$
Finally, the support 
$$
 \supp(W_{\vec\bullet})\subset \R^r
$$
of $W_{\vec \bullet}$ is the closed convex cone spanned by all indices $\vec m\in \N^r$ such that $W_{\vec m}\not = 0$.

The crucial definition is the following.
\begin{definition}\label{def:GF'}
Let $V$ and $W_{\vec\bullet}$ as above.
We say that $W_{\vec\bullet}$ satisfies condition (GF') if the following hold:
\begin{enumerate}
\item[(i)] $\supp(W_{\vec\bullet})\subset \R^r$ has non-empty interior;
\item[(ii)] for any integral vector $\vec a\in {\textrm{int}}(\supp(W_{\vec \bullet}))$, we have $W_{k\vec a}\not = 0,$ for all $k\gg 0$;
\item[(iii)] there exists an integral vector $\vec a_0\in {\textrm{int}}(\supp(W_{\vec \bullet}))$ such that the single graded linear series 
$W_{\vec a_0, \bullet}$ satisfies condition (GF).
\end{enumerate}
 \hfill $\square$ \end{definition}
It is easy to check (cf. the following lemma) that item  (iii) above insures that any integral vector in the interior of the support of $W_{\vec \bullet}$ has the same property.

\begin{lemma} Let $V$ and $W_{\vec\bullet}$ as above.
Suppose $W_{\vec\bullet}$ satisfies condition (GF'). Then for every integral vector $\vec a\in {\mathrm{int}}(\supp(W_{\vec \bullet}))$ the corresponding single graded linear series 
$ W_{\vec a, \bullet}$ satisfies condition (GF).
\end{lemma}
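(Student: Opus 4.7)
The plan is to reduce the statement for an arbitrary integral interior vector $\vec a$ back to the hypothesis (iii) at $\vec a_0$ by a multiplication-of-sections argument, exploiting the fact that both vectors lie in the open convex cone $\mathrm{int}(\supp(W_{\vec\bullet}))$.

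First I would produce an auxiliary integral vector. Since $\vec a,\vec a_0\in \mathrm{int}(\supp(W_{\vec\bullet}))$, for every integer $N$ the difference
$$
\vec c_N:=N\vec a-\vec a_0=N\bigl(\vec a-\tfrac{1}{N}\vec a_0\bigr)
$$
is again integral; moreover, for $N$ sufficiently large $\vec a-\tfrac{1}{N}\vec a_0$ is a small perturbation of $\vec a$ and thus still lies in $\mathrm{int}(\supp(W_{\vec\bullet}))$, which is stable under multiplication by positive scalars. Fix one such $N$ and set $\vec c:=\vec c_N$, so that $\vec a_0+\vec c=N\vec a$.

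Next, applying condition (ii) both to $\vec a_0$ and to $\vec c$, we have $W_{k\vec a_0}\neq 0$ and $W_{k\vec c}\neq 0$ for every $k\gg 0$. By (iii) together with the remark following the definition of (GF), we may additionally assume that $\phi_{|W_{k\vec a_0}|}$ is generically finite onto its image for such $k$. Choosing a non-zero section $s\in W_{k\vec c}$, multiplication by $s$ yields an injection
$$
s\cdot W_{k\vec a_0}\hookrightarrow W_{k(\vec a_0+\vec c)}=W_{kN\vec a}.
$$
The rational map defined by the linear subsystem $s\cdot W_{k\vec a_0}$ coincides with $\phi_{|W_{k\vec a_0}|}$ (the two agree projectively away from the divisor of $s$), hence is generically finite; and since the larger series $W_{kN\vec a}$ factors this map through a linear projection, $\phi_{|W_{kN\vec a}|}$ is generically finite as well. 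Combined with condition (ii) applied to $\vec a$ (which gives $W_{m\vec a}\neq 0$ for all $m\gg 0$), the remark following the definition of (GF) lets us conclude that $W_{\vec a,\bullet}$ satisfies condition (GF).

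The only real subtlety lies in the convex-geometry step, namely ensuring that $\vec c=N\vec a-\vec a_0$ lands in the interior of the support so that (ii) can be invoked for it; once that is in place, the rest is formal and rests only on the elementary fact that if $W'\subseteq W$ is a linear subseries of sections of a line bundle, then $\phi_{W'}$ factors through $\phi_W$ via a linear projection, so that generic finiteness is inherited upward from $W'$ to $W$.
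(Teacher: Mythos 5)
Your proof is correct and takes essentially the same approach as the paper: write an integral multiple of $\vec a$ as $\vec a_0$ plus a vector still interior to the support (your $\vec c_N = N\vec a - \vec a_0$ is the paper's $\vec b$ with $k\vec a = \vec a_0 + \vec b$), then multiply by a nonzero section coming from that remainder vector to inject $W_{k\vec a_0}$ into $W_{kN\vec a}$ and transfer generic finiteness upward. Your write-up is slightly more detailed than the paper's at two points — the perturbation argument showing $\vec c_N$ really does land in the interior for $N\gg 0$, and the explicit observation that a generically finite sublinear series forces the full series to be generically finite — both of which the paper leaves implicit.
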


\begin{proof}Let $\vec a\in {\textrm{int}}(\supp(W_{\vec \bullet}))$ be any integral vector. Then, for $k\in \N$ large enough, we have
$$
k\vec a = \vec a_0 + \vec b,
$$
where $\vec a_0$ is as in Definition \ref{def:GF'}, item (iii), and $\vec b$ also lies in ${\textrm{int}}(\supp(W_{\vec \bullet}))$. By Definition \ref{def:GF'}, item (ii), we have $W_{m\vec b} \not = 0$, for $m\gg 0$. Let $s\in W_{m\vec b}$ be a non-zero section. Then 
$$
 s\cdot W_{m\vec a_0}\subseteq W_{mk\vec a}
$$
and therefore since $W_{\vec a_0, \bullet}$ satisfies condition (GF) so does $W_{\vec a, \bullet}$.
\end{proof}
The lemma implies in particular that if a multi-graded linear series $W_{\vec \bullet}$ satisfies condition (GF'), then every  single graded linear series $W_{\vec a, \bullet}$  satisfies condition (GF) for every $\vec a\in {\mathrm{int}}(\supp(W_{\vec \bullet}))$. We want to realize the Okounkov bodies of the linear series $W_{\vec a, \bullet}$  as fibers of a global cone associated to $W_{\vec\bullet}$.
To do so, fix an admissible flag $F_\bullet$ on $V$ and define the multi-graded semigroup $\Gamma(W_{\vec\bullet})$ of $W_{\vec\bullet}$
as follows
$$
 \Gamma(W_{\vec\bullet}):=\Gamma_{F_\bullet}(W_{\vec\bullet})=
 \{(\nu(s),\vec m): 0\not = s\in W_{\vec m}\}\subseteq \N^{d+r}.
$$

\begin{lemma}\label{lem:2} Let $V$ and $W_{\vec\bullet}$ as above.
Suppose $W_{\vec\bullet}$ satisfies condition (GF'). Then there exists a flag $F_\bullet$ for which $\Gamma_{F_\bullet}(W_{\vec \bullet})$ generates $\Z^{d+r}$ as a group. 
\end{lemma}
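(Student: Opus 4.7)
The plan is to reduce to the single graded case handled by Lemma \ref{lem:condGF}: if we can find integer vectors in $\mathrm{int}(\supp(W_{\vec\bullet}))$ whose pairwise differences recover the standard basis of $\Z^r$, then applying Lemma \ref{lem:condGF} simultaneously (via Remark \ref{rmk:countable}) to the associated single graded series will give enough elements in $\Gamma_{F_\bullet}(W_{\vec \bullet})$ to span both the $\Z^d$ and the $\Z^r$ factor of $\Z^{d+r}$.

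Concretely, I would first choose $r+1$ integer vectors $\vec a_0,\vec a_1,\ldots,\vec a_r \in \mathrm{int}(\supp(W_{\vec\bullet}))$ such that $\vec a_j-\vec a_0=\vec e_j$ for $j=1,\ldots,r$, where $\vec e_j$ is the $j$-th standard basis vector of $\Z^r$. This is possible because $\mathrm{int}(\supp(W_{\vec\bullet}))$ is a non-empty open convex cone in $\R^r$: by density it contains a rational, hence (using the cone property to clear denominators) an integer vector $\vec v$; since the interior is open, some ball $B(\vec v,\varepsilon)$ lies inside it, and by scaling $B(N\vec v,N\varepsilon)$ lies inside it as well, so for $N\ge 2/\varepsilon$ one can set $\vec a_0:=N\vec v$ and $\vec a_j:=N\vec v+\vec e_j$. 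By the preceding lemma each $W_{\vec a_j,\bullet}$ satisfies condition (GF), so by Remark \ref{rmk:countable} we may fix a single admissible flag $F_\bullet$ for which $\Gamma_{F_\bullet}(W_{\vec a_j,\bullet})$ generates $\Z^{d+1}$ for every $j=0,\ldots,r$.

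Inspecting the proof of Lemma \ref{lem:condGF}, for each $j$ one obtains a prime $p_j$ and integers $\ell_j,q_j$ with $\gcd(p_j\ell_j,q_j)=1$, together with sections in $W_{p_j\ell_j\vec a_j}$ and $W_{q_j\vec a_j}$ whose valuations via $\nu_{F_\bullet}$ are $0,e_1,\ldots,e_d$ and $0$ respectively. Lifted to $\Gamma:=\Gamma_{F_\bullet}(W_{\vec\bullet})\subseteq \Z^{d+r}$ via the inclusions $W_{k\vec a_j}\subseteq W_{\vec\bullet}$, these produce
\[
 (0,p_j\ell_j\vec a_j),\ (e_k,p_j\ell_j\vec a_j)\ \text{for}\ k=1,\ldots,d,\ (0,q_j\vec a_j)\ \in\ \Gamma.
\]
Taking differences in the group $\langle\Gamma\rangle$ generated by $\Gamma$ gives $(e_k,0)$ for every $k=1,\ldots,d$, so $\Z^d\times\{0\}\subseteq \langle \Gamma\rangle$. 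Applying B\'ezout to $(0,p_j\ell_j\vec a_j)$ and $(0,q_j\vec a_j)$ yields $(0,\vec a_j)\in \langle\Gamma\rangle$ for each $j$, and then $(0,\vec e_j)=(0,\vec a_j)-(0,\vec a_0)\in \langle\Gamma\rangle$ for $j=1,\ldots,r$, so $\{0\}\times \Z^r\subseteq \langle\Gamma\rangle$ as well; combining, $\langle\Gamma\rangle=\Z^{d+r}$. The only subtle point is the choice of the vectors $\vec a_j$: without the condition that their consecutive differences form the standard basis of $\Z^r$, one would only recover a finite-index subgroup of $\Z^r$ in the second factor.
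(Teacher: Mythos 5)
Your proof is correct and follows the same route as the paper: reduce to the single-graded Lemma~\ref{lem:condGF} via Remark~\ref{rmk:countable}, then combine the semigroups $\Gamma_{F_\bullet}(W_{\vec a_j,\bullet})\subseteq\Gamma_{F_\bullet}(W_{\vec\bullet})$ for suitable lattice points $\vec a_j$ in $\mathrm{int}(\supp W_{\vec\bullet})$. The difference is one of care. The paper simply says \emph{``choose $\vec a_1,\ldots,\vec a_r$ spanning $\Z^r$''} and concludes, whereas you take $r+1$ vectors $\vec a_0,\vec a_j=\vec a_0+\vec e_j$ so that the differences literally give the standard basis. Your version is actually cleaner: with only $r$ integer vectors inside an open full-dimensional cone one generically recovers only a finite-index subgroup of $\Z^r$ in the second factor (e.g.\ $N\vec v,\,N\vec v+\vec e_1,\ldots,N\vec v+\vec e_{r-1}$ span $\Z^{r-1}\times Nv_r\Z$), so the spanning claim requires the kind of adjustment you make. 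You also usefully make explicit which generators from the proof of Lemma~\ref{lem:condGF} survive into $\Gamma_{F_\bullet}(W_{\vec\bullet})$ and how B\'ezout kills the extra multiplicative factors $p_j\ell_j$, $q_j$ -- steps the paper leaves implicit.
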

\begin{proof}
 Given an integral vector $\vec a\in {\textrm{int}(\supp(W_{\vec\bullet}))}$
denote by 
$$
 \Gamma_{\vec a}=\Gamma_{F_\bullet}(W_{\vec a,\bullet})\subseteq 
 \N^d\times \N\vec a\subset \N^d\times \N^r
$$
the graded semigroup of $W_{\vec a,\bullet}$ with respect to $F_\bullet$, which is a sub-semigroup of $\Gamma(W_{\vec \bullet})$. By Remark \ref{rmk:countable} we can suppose that each $\Gamma_{\vec a}$ generates $\Z^d\times \N\vec a$
as a group. To conclude choose $\vec a_1,\ldots,\vec a_r$ spanning $\Z^r$ so that the corresponding $\Gamma_{\vec a_i}$ together generate $\Z^{d+r}$. Since $\Gamma_{\vec a_i}\subseteq \Gamma(W_{\vec \bullet})$
we are done.
\end{proof}
Now let $\Sigma(W_{\vec \bullet})\subseteq \R^d\times \R^r$ be the closed convex cone spanned by $\Gamma(W_{\vec \bullet})$ and set 
\begin{equation}\label{eq:globalOk}
 \Delta(W_{\vec \bullet}):=\Sigma(W_{\vec \bullet})\subseteq \R^d\times \R^r.
\end{equation}
Consider the projection $(pr_2)_{|\Delta(W_{\vec \bullet})}:\Delta(W_{\vec \bullet})\to \R^r$
induced by the restriction to $\Delta(W_{\vec \bullet})$ of the 
natural projection onto the second factor $\R^d\times \R^r\to  \R^r$. 


\begin{proof}[Proof of Theorem \ref{thm:infamiglia}]
Let $\Delta(W_{\vec \bullet})$ be defined as in (\ref{eq:globalOk}). By homogeneity, we can suppose that $\vec a$ is integral. Remark that by definition, if $\Gamma:=\Gamma(W_{\vec \bullet})$ and $\vec a\in {\textrm{int}}(\supp(W_{\vec \bullet}))$, then
$$
 \Gamma_{\N\vec a}:=\Gamma\cap(\N^d\times \N\vec a)= \Gamma(W_{\vec a,\bullet}).
$$
Hence 
$$\Delta(W_{\vec a, \bullet}):=\Delta (\Gamma(W_{\vec a,\bullet}))=\Sigma (\Gamma_{\N\vec a})\cap (\R^d\times \{\vec a\}).
$$
Since, by Lemma \ref{lem:2}, $\Gamma$ generates $\Z^{d+r}$, then by \cite[Proposition 4.9]{LM} we have that
$$
 \Delta (\Gamma_{\N\vec a})=\Sigma(\Gamma)\cap(\R^d\times \{\vec a\}) 
$$
and we are done.
\end{proof}

From the theorem above, using standard results from convex geometry, one immediately deduces interesting properties of the volume function. In particular Corollary \ref{cor:cont} follows. 
\begin{remark}\label{rmk:cont}
A special case of Corollary \ref{cor:cont} of particular interest is the following. Let $D_1,\ldots,D_r$ be effective divisors on a variety $X$ and let 
$$
 k:= {\textrm{max}} \{\kod(X,D_1),\ldots, \kod(X,D_r)\}.
$$
Let $V$ be a general subvariety of dimension $d\leq k$. 
Let $W_{\vec\bullet}$ be the multi-graded linear series on $V$ defined as follows:
$$
 W_{\vec m}:=H^0(X|V, \vec m D),\ \forall \vec m \in \N^r.
$$
Then Corollary \ref{cor:cont} implies that  the restricted volume function
$$
 \vol_{X|V} : \Q_{>0}^r\to \R,\ \vec m\mapsto \vol_{X|V}(\vec m D)
$$
extends to  a continuous function which is  log-concave. Since the restricted volume is a homogeneous function, then the same is true for non-effective divisors of non-negative Iitaka dimension.  \phantom{a} \hfill$\square$
\end{remark}

%
\subsection{Multi-graded Fujita approximation}
%

Given a multi-graded linear series $W_{\vec \bullet}$ and a positive integer $p$, we will consider   the sub-multi-graded linear series $W_{\vec \bullet}^{(p)}$ defined as follows:
$$
 W_{\vec m}^{(p)}:= \sum_{|\vec m_i|=p,\ \vec m_1+\ldots + \vec m_k=\vec m}  W_{\vec m_1}\cdots W_{\vec m_k}\subseteq W_{\vec m}, \ \textrm{if}\ |\vec m|=kp
$$ 
and 
$$
W_{\vec m}^{(p)}:=0\ \text{if}\ p\ \text{does not divide}\ |\vec m|.
$$
As usual, fixing an integral vector $\vec a\in \N^r$, the multi-graded  linear series $W_{\vec \bullet}^{(p)}$ gives rise to a single graded linear series 
$W_{\vec a, \bullet}^{(p)}$ defined as follows:
$$
 W_{\vec a, h}^{(p)}=\sum_{|\vec m_i|=p,\ \vec m_1+\ldots + \vec m_k=h\vec a}  W_{\vec m_1}\cdots W_{\vec m_k}\subseteq W_{h\vec a}, \ \textrm{if}\ |\vec a|\cdot h =kp
$$
and 
$$
W_{\vec a,h}^{(p)}:=0\ \text{if}\ p\ \text{does not divide}\ |\vec a|\cdot h .
$$
\begin{remark}\label{rmk:attenzione}
Since the notation is  particularly cumbersome, it is important to notice that when $p=p' \cdot |\vec a|$ the graded linear series $W_{\vec a, \bullet}^{(p)}$ defined above contains the graded linear series $(W_{\vec a, \bullet})^{(p')}$ associated to the single graded linear series $W_{\vec a, \bullet}$. In fact
 
$$
 W_{\vec a, h}^{(p)}=\sum_{|\vec m_i|=p=p'|\vec a|,\ \vec m_1+\ldots + \vec m_k=h\vec a}  W_{\vec m_1}\cdots W_{\vec m_k}\subseteq W_{h\vec a}, \ \textrm{if}\ |\vec a|\cdot h =kp=kp'|\vec a|\ (\Leftrightarrow h=kp'),
$$
and zero otherwise. 
On the other hand, by definition, the $h$-th graded piece of $(W_{\vec a, \bullet})^{(p')}$ is 
$$
 \Im(S^kW_{p'\vec a}\to W_{kp'\vec a}) \ \textrm{if}\ h=kp'
$$
and zero otherwise. So our claim is proved. 
\hfill$\square$
\end{remark}
\begin{theorem}\label{thm:multigraded}
Let $V$ be a projective variety of dimension $d$. Let $W_{\vec\bullet}$ be a multi-graded linear series on $V$ associated to $r$ divisors. Suppose $W_{\vec\bullet}$ satisfies condition (GF'). 
Then for any $\epsilon>0$ and any compact set 
$$
 K\subseteq \{ (a_1,\ldots,a_r)\in \R_{>0}^r:a_1+\ldots+a_r=1\} \cap \mathrm{int}(\supp(W_{\vec \bullet}))
$$
there exists an integer $p_0=p_0(\epsilon,K)$ such that if $p\geq p_0$ then
\begin{equation}\label{eq:multifujita}
 \left|1-\frac{ \vol({W_{\vec a,\bullet}^{(p)}})}{\vol(W_{\vec a,\bullet})} \right| < \epsilon
\end{equation}
for all integral vectors $\vec a$ in the cone $C_K\subset \R_{\geq 0}^r$ generated by $K$.

\end{theorem}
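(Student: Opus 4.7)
The plan is to reduce to the single-graded Fujita approximation of Theorem \ref{thm:fujitaseries} via the global Okounkov body machinery of Theorem \ref{thm:infamiglia}, then upgrade pointwise approximation to uniform convergence on the compact set $K$ by a convex-geometric compactness argument, in the spirit of \cite[Theorem B(ii)]{LM} and \cite{Jow}.

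First, for each integral vector $\vec a \in \mathrm{int}(\supp(W_{\vec\bullet}))$, I would establish pointwise Fujita approximation of $W_{\vec a,\bullet}$ by the sub-series $W^{(p)}_{\vec a,\bullet}$. By Remark \ref{rmk:attenzione}, whenever $p = p'|\vec a|$ one has
$$
 (W_{\vec a,\bullet})^{(p')} \;\subseteq\; W^{(p)}_{\vec a,\bullet} \;\subseteq\; W_{\vec a,\bullet}.
$$
Since $W_{\vec a,\bullet}$ satisfies condition (GF) by the lemma following Definition \ref{def:GF'}, Theorem \ref{thm:fujitaseries} yields $\vol((W_{\vec a,\bullet})^{(p')}) \to \vol(W_{\vec a,\bullet})$ as $p' \to \infty$; sandwiching gives $\vol(W^{(p)}_{\vec a,\bullet}) \to \vol(W_{\vec a,\bullet})$ as $p \to \infty$ along multiples of $|\vec a|$.

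To obtain uniformity over $K$, I would construct, for each $p$, a global closed convex cone $\Delta(W^{(p)}_{\vec\bullet}) \subseteq \R^d \times \R^r$ from the multi-graded semigroup of $W^{(p)}_{\vec\bullet}$ relative to an admissible flag as in Lemma \ref{lem:2}. Arguing as in Theorem \ref{thm:infamiglia}, whenever $p$ divides $|\vec a|$ the fiber of $\Delta(W^{(p)}_{\vec\bullet})$ over $\vec a$ is the Okounkov body $\Delta(W^{(p)}_{\vec a,\bullet})$, whose Euclidean volume equals $\frac{1}{d!}\vol(W^{(p)}_{\vec a,\bullet})$ by Theorem \ref{thm:volumeA}. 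Thus $\vec a \mapsto \vol(W^{(p)}_{\vec a,\bullet})$ extends continuously and homogeneously of degree $d$ to $\mathrm{int}(\supp(W_{\vec\bullet}))$. Along the divisibility partial order on $p$, the cones $\Delta(W^{(p)}_{\vec\bullet})$ form an increasing family inside $\Delta(W_{\vec\bullet})$, because an element of $W^{(p)}_{\vec m}$ written as a product of sections indexed by vectors of size $p$ can be regrouped into a product of sections indexed by vectors of size $pp'$ when $pp'$ divides $|\vec m|$. Combined with the pointwise limit above, an application of Dini's theorem to the normalized simplex slice $K$ promotes the pointwise convergence of the continuous fiber-volume functions to uniform convergence on $K$. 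By homogeneity (Lemma \ref{lem:homvolume}) this gives (\ref{eq:multifujita}) uniformly for all integral $\vec a \in C_K$, at least for $p$ running over a cofinal divisible sequence.

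The main obstacle is to extend the inequality from such cofinal $p$ to all $p \geq p_0$. To bridge this I would compare $W^{(p)}_{\vec\bullet}$ with $W^{(pq)}_{\vec\bullet}$ for a fixed large divisible $q$: multiplying by a non-zero section in a suitable $W_{\vec n}$ lying in the interior of the support embeds a Veronese of one into a shifted piece of the other, so that the resulting discrepancy affects volumes only by an arbitrarily small amount once $p$ is large (as in the classical passage between arbitrary multiples in Fujita's approximation). A secondary subtlety is that Dini's theorem is applied on the compact slice $K$ rather than on the cone itself, which is legitimate precisely because $K$ avoids the boundary of $\supp(W_{\vec\bullet})$ and the extended volume functions are continuous there by Corollary \ref{cor:cont}.
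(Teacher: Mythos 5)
Your proposal follows the same overall strategy as the paper (reduce to the single-graded Fujita approximation of Theorem~\ref{thm:fujitaseries} via the global Okounkov cones of Theorem~\ref{thm:infamiglia}), but the mechanism you use to upgrade pointwise to uniform convergence is genuinely different. The paper, following Jow, fixes $\epsilon'>0$, exploits uniform continuity of $\vec a\mapsto \vol\bigl((pr_2)^{-1}_{|\Delta(W_{\vec\bullet})}(\vec a)\bigr)$ on $K$ to decompose $K$ into finitely many rational polytopes $K_i$ on which this function is within $\epsilon'$ of a constant $M_i$, applies the single-graded Fujita approximation only at the finitely many rational vertices, and then propagates the lower bound to all of $K_i$ by convexity of the cone $\Delta(W^{(p)}_{\vec\bullet})$ together with the Brunn--Minkowski inequality. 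Your route instead establishes a monotonicity of the cones $\Delta(W^{(p)}_{\vec\bullet})$ along the divisibility order and invokes Dini's theorem. Both routes do reach the same place, and both are left with the same cofinality issue (the Fujita estimate at a given $\vec a$ is naturally available only for $p$ divisible by $|\vec a|$), which your writeup flags explicitly but which the paper's proof also does not spell out. What the paper's choice of Brunn--Minkowski buys is that it needs \emph{only} the pointwise bound at a finite set of rational vertices and convexity of the limiting cone; it requires no monotonicity claim for $\Delta(W^{(p)}_{\vec\bullet})$ in $p$, nor continuity of each individual fiber-volume function $\vec a\mapsto\vol\bigl((pr_2)^{-1}_{|\Delta(W^{(p)}_{\vec\bullet})}(\vec a)\bigr)$. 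Your Dini argument needs both, and these carry real proof obligations you only gesture at: (1) you must verify that $W^{(p)}_{\vec\bullet}$ itself satisfies condition (GF') and that $K$ lies in the interior of $\supp(W^{(p)}_{\vec\bullet})$, so that Theorem~\ref{thm:infamiglia} and Corollary~\ref{cor:cont} apply to $W^{(p)}_{\vec\bullet}$ — this is only clear for $p$ sufficiently large and divisible, which is circular with what you are trying to prove; (2) the nesting $\Delta(W^{(p)}_{\vec\bullet})\subseteq\Delta(W^{(pp')}_{\vec\bullet})$ is plausible (your regrouping of products is correct at the level of the graded pieces on which both are defined, and the cones agree in the limit), but it is a claim about the whole cone that should be checked rather than asserted; (3) Dini's theorem must be applied to a monotone net over a directed set rather than a sequence, which works but needs to be stated. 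If you want to stay with a Dini-flavored argument you should address these; otherwise the Brunn--Minkowski/polytope argument is arguably the shorter path because it sidesteps all three.
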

When the divisors $D_1\ldots,D_r$ are big the theorem was obtained by Jow \cite[see Theorem p. 333 and Remark p. 335]{Jow}. 

\begin{remark}\label{rmk:stessop}
The single-graded Fujita approximation proved in Theorem \ref{thm:fujitaseries} yields, for all integral vectors $\vec a$ and all $\epsilon>0$, the existence of an integer $p_0=p_0(\vec a,\epsilon)$ such that (\ref{eq:multifujita})  holds. The main point in Theorem \ref{thm:multigraded} is that $p_0$ is the same {\it for all} integral vectors $\vec a\in C_K$.\hfill$\square$
\end{remark}


\begin{proof}[Proof of Theorem \ref{thm:multigraded}]
We follow Jow's arguments, replacing the single graded Fujita approximation proved in the big case in \cite[Theorem 3.3]{LM} by our Theorem \ref{thm:fujitaseries} and \cite[Theorem 4.21]{LM} by our Theorem \ref{thm:infamiglia}. Since, by the homogeneity of the volume, (\ref{eq:multifujita}) is invariant under scaling of $\vec a$ it is sufficient to prove it for $\vec a\in K\cap \Q^r$. We consider the global Okounkov bodies $\Delta(W_{\vec \bullet})\subset \R^d\times \R^r$ (respectively $\Delta(W_{\vec \bullet}^{(p)})\subset \R^d\times \R^r$) associated to $W_{\vec \bullet}$ (respectively to $W_{\vec \bullet}^{(p)}$) as defined in (\ref{eq:globalOk}). By Theorem  \ref{thm:infamiglia} we have 
\begin{equation}\label{eq:fibre1}
 (pr_2)^{-1}_{|\Delta(W_{\vec \bullet})}(\vec a)=\Delta(W_{\vec a,\bullet})
\end{equation}
as well as 
\begin{equation}\label{eq:fibre2}
 (pr_2)^{-1}_{|\Delta(W^{(p)}_{\vec \bullet})}(\vec a)=\Delta(W^{(p)}_{\vec a,\bullet})
 \end{equation}
for all $\vec a\in K\cap \Q^r$. By the single graded Fujita approximation 
provided in Theorem \ref{thm:fujitaseries}, $\vol(W_{\vec a,\bullet})$ can be approximated arbitrarily well by $\vol((W_{\vec a,\bullet})^{(p')}),$ for $p'\gg0$, and the latter volume, by Remark \ref{rmk:attenzione}, is bounded from above by 
$\vol(W_{\vec a, \bullet}^{(p)})$. Hence, given any finite subset $S\subset K\cap \Q^r$ and any $\epsilon'>0$  we have 
$$
\vol(\Delta(W_{\vec a, \bullet}^{(p)}))\geq \vol(\Delta(W_{\vec a,\bullet})) -\epsilon'
$$
for all $\vec a\in S$ and $p\in\N$ sufficiently large and divisible. 
By (\ref{eq:fibre1}) and (\ref{eq:fibre2}) this yields 
\begin{equation}\label{eq:maggior}
\vol((pr_2)^{-1}_{|\Delta(W^{(p)}_{\vec \bullet})}(\vec a))\geq \vol((pr_2)^{-1}_{|\Delta(W_{\vec \bullet})}(\vec a)) -\epsilon'
\end{equation}
for all $\vec a\in S$ and $p\in\N$ sufficiently large and divisible.

The function $\vec a\mapsto \vol((pr_2)^{-1}_{|\Delta(W_{\vec \bullet})}(\vec a))$ is uniformly continous on $K$. Therefore for any $\epsilon'>0$ we can find a partition of $K$ into a union of polytopes with disjoint interiors $K=\cup_i K_i$ such that all the vertices of each $K_i$ have rational coordinates and for each $K_i$ there is a constant $M_i$ such that 
\begin{equation}\label{eq:Mi}
M_i\leq \vol((pr_2)^{-1}_{|\Delta(W_{\vec \bullet})}(\vec a))\leq M_i +\epsilon',\ \forall \vec a\in K_i.
\end{equation}
Now take $S$ to be the set of all the vertices of all the $T_i$'s. Then (\ref{eq:maggior}) holds. We claim that this implies  
\begin{equation}\label{eq:fine}
\vol ((pr_2)^{-1}_{|\Delta(W^{(p)}_{\vec \bullet})}(\vec a))\geq \vol((pr_2)^{-1}_{|\Delta(W_{\vec \bullet})}(\vec a)) -2\epsilon',\ \forall \vec a\in K\cap \Q^r.
\end{equation}
Before proving (\ref{eq:fine}) let us see how it implies the theorem. First of all, it is equivalent to 
$$
 1-\frac{\vol((pr_2)^{-1}_{|\Delta(W_{\vec \bullet})}(\vec a))}{\vol ((pr_2)^{-1}_{|\Delta(W^{(p)}_{\vec \bullet})}(\vec a))}\leq \frac{2\epsilon'}{\vol ((pr_2)^{-1}_{|\Delta(W^{(p)}_{\vec \bullet})}(\vec a))},\ \forall \vec a\in K\cap \Q^r.
$$
Since $\vec a\mapsto \vol ((pr_2)^{-1}_{|\Delta(W^{(p)}_{\vec \bullet})}(\vec a))$ is a continuous map and $K\cap \Q^r$ is compact, it is sufficient to take 
$$
\epsilon'\leq   \frac{ \epsilon}{2}\cdot \min_{\vec a\in K\cap \Q^r}(\vol ((pr_2)^{-1}_{|\Delta(W^{(p)}_{\vec \bullet})}(\vec a))) 
$$
and to invoke again (\ref{eq:fibre1}) and (\ref{eq:fibre2}) to conclude.

Now let us prove (\ref{eq:fine}). It is sufficient to prove it on each $K_i$. Let 
$\vec v_1,\ldots,\vec v_k$ be the vertices of $K_i$. Since $K_i$ is a polytope, each vector $\vec a\in K_i$ can be written
as $\vec a=\sum_j t_j\vec v_j$, where $t_j\geq 0$ and $\sum_j t_j=1$. 
Since $\Delta(W^{(p)}_{\vec \bullet})$ is convex we have 
$$ 
(pr_2)^{-1}_{|\Delta(W^{(p)}_{\vec \bullet})}(\vec a)\supseteq \sum_j t_j (pr_2)^{-1}_{|\Delta(W^{(p)}_{\vec \bullet})}(\vec v_j).
$$
By (\ref{eq:maggior}) and (\ref{eq:Mi}) the volume of the right-hand side is $\geq M_i-\epsilon'$. Therefore, by the Brunn--Minkowski inequality we get
\begin{equation}\label{eq:penultimo}
\vol((pr_2)^{-1}_{|\Delta(W^{(p)}_{\vec \bullet})}(\vec a))\geq M_i-\epsilon',\ \forall \vec a\in K_i\cap \Q^r.
\end{equation}
Equation (\ref{eq:fine}) then follows from (\ref{eq:penultimo}) and (\ref{eq:Mi}).
\end{proof}


\section{Multiplier and base ideals} \label{sect:multiplier}


\subsection{Properties \textnormal{($\star$)} and \textnormal{b-($\star$)}} 
In \cite[Theorem 11.2.21]{LazII} it is proved that if $B$ is a big divisor on a smooth projective variety $Z$, then there exists an effective divisor $N$ on $Z$ such that 
\begin{equation} \label{fondamentale}
\mathcal{J}(Z, \|pB\|)(-N) \subseteq \mathfrak{b}(|pB|) \tag{$\star$},
\end{equation}
for every $p \in \mathbb{N}(B)$.
The purpose of this section is to understand if (\ref{fondamentale}) holds for other kinds of divisors. 
Actually we are not really interested in (\ref{fondamentale}) but mostly on its birational counterpart. Therefore we give the following
\begin{definition} \label{pinco}
Given a normal projective variety $X$ and a divisor $D$ on $X$, we will say that $D$ satisfies property b-$(\star)$ if 
\begin{enumerate}
\item $D$ is Cartier, 
\item $\kappa(X,D) \geq 0$,
\item there exist a smooth projective variety $Y$ and a  birational map  $\mu: Y \rightarrow X$, such that $\mu^*(D)$ verifies (\ref{fondamentale}) (that is: there exists an effective divisor $N$ on $Y$ such that $\mathcal{J}(Y, \|\mu^*(pD)\|)(-N) \subseteq \mathfrak{b}(|\mu^*(pD)|)$ for every $p \in \mathbb{N}(D)$).
\end{enumerate}
 \hfill $\square$ \end{definition}
Note that, possibly taking a greater divisor $N' \geq N$, we can always assume that $N$ is ample.

We will collect here a series of lemmas that will be useful later on and that give some general properties of divisors verifying b-$(\star)$.

\begin{lemma} \label{quasibig}
Let $Y,Z$ be two smooth projective varieties and let $f: Y \rightarrow Z$ be a surjective morphism with connected fibers. Let $B$ be a divisor on $Z$ and let $\mathbb{P}$ be a subset of $\mathbb{N}(B)$. Suppose, for a certain effective divisor $N$ on $Z$, that $B$ satisfies equation \textnormal{(\ref{fondamentale})} for every $p \in \mathbb{P}$. Then $$ \mathcal{J}(Y, \|f^*(pB)\|)(-f^*(N)) \subseteq \mathfrak{b}(|f^*(pB)|)$$ for every $p \in \mathbb{P}$.
\end{lemma}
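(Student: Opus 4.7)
My approach is to pull back the hypothesis $(\star)$ from $Z$ to $Y$ along $f$, and identify the pulled-back objects with the intrinsic ones on $Y$ appearing in the conclusion.

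The initial observation is that since $f:Y\to Z$ is surjective with connected fibers between smooth projective varieties, $f_*\mathcal{O}_Y=\mathcal{O}_Z$. Combined with the projection formula this yields $H^0(Y,f^*(mB))\cong H^0(Z,mB)$ for every $m\geq 0$, so the evaluation map defining $\mathfrak{b}(|f^*(pB)|)$ is the $f$-pullback of the one on $Z$, giving
\[
\mathfrak{b}(|f^*(pB)|)=\mathfrak{b}(|pB|)\cdot\mathcal{O}_Y.
\]
Since the functor $f^{-1}(-)\cdot\mathcal{O}_Y$ preserves inclusions of ideal sheaves and commutes with tensoring by line bundles, applying it to the hypothesis $\mathcal{J}(Z,\|pB\|)(-N)\subseteq \mathfrak{b}(|pB|)$ and using the above identification yields
\[
\bigl(\mathcal{J}(Z,\|pB\|)\cdot\mathcal{O}_Y\bigr)(-f^*N)\;\subseteq\; \mathfrak{b}(|f^*(pB)|).
\]

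It thus suffices to establish the pullback comparison
\[
\mathcal{J}(Y,\|f^*(pB)\|)\;\subseteq\;\mathcal{J}(Z,\|pB\|)\cdot\mathcal{O}_Y. \qquad (\dagger)
\]
For this I fix $k$ large and divisible so that both asymptotic multiplier ideals are realized at the $k$-th truncation, and by Hironaka I choose a smooth birational model $\mu:W\to Y$ such that $g:=f\circ \mu: W\to Z$ is a log resolution of $|kpB|$ and simultaneously $\mu$ is a log resolution of $|f^*(kpB)|$. Writing $g^*|kpB|=|M|+F$ with $|M|$ base point free and $F$ the fixed part, the base-ideal compatibility applied at level $kp$ forces $\mu^*|f^*(kpB)|$ to have the same fixed part $F$, so
\[
\mathcal{J}(Y,\|f^*(pB)\|)=\mu_*\mathcal{O}_W(K_{W/Y}-\lfloor F/k\rfloor),\quad \mathcal{J}(Z,\|pB\|)=g_*\mathcal{O}_W(K_{W/Z}-\lfloor F/k\rfloor).
\]
The inclusion $(\dagger)$ is then derived from the identity $K_{W/Z}=K_{W/Y}+\mu^*K_{Y/Z}$, the projection formula for $\mu$, and the adjunction between $f^*$ and $f_*$, via a local section-level comparison on $W$.

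The main subtlety of this last step is that $K_{Y/Z}$ need not be effective, so $(\dagger)$ cannot be concluded from a naive divisor inequality on $W$; rather, the argument must absorb the $\omega_{Y/Z}$-twist that the projection formula introduces, while using that both sides of $(\dagger)$ sit inside $\mathcal{O}_Y$ as honest ideal sheaves.
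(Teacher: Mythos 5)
Your overall architecture matches the paper's: pull back the hypothesis $(\star)$ along $f$, use $f_*\mathcal{O}_Y=\mathcal{O}_Z$ and the projection formula to identify $\mathfrak{b}(|f^*(pB)|)=\mathfrak{b}(|pB|)\cdot\mathcal{O}_Y$, and reduce everything to the pullback comparison $(\dagger)\colon \mathcal{J}(Y,\|f^*(pB)\|)\subseteq\mathcal{J}(Z,\|pB\|)\cdot\mathcal{O}_Y$. The paper simply quotes \cite[Example 9.5.8]{LazII} for this comparison (noting that it extends to asymptotic multiplier ideals because $f^*|pB|=|f^*(pB)|$), and is done.

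The gap is that you try to prove $(\dagger)$ from scratch and do not actually finish. Your log-resolution computation on $W$ is correct up to the displayed formulas for $\mathcal{J}(Y,\|f^*(pB)\|)$ and $\mathcal{J}(Z,\|pB\|)$. But the final step, ``derived from $K_{W/Z}=K_{W/Y}+\mu^*K_{Y/Z}$, the projection formula, and adjunction, via a local section-level comparison,'' is precisely where all the content lies, and it is not a formal manipulation. Unwinding your formulas via $g_*=f_*\circ\mu_*$ and the projection formula gives
\begin{equation*}
\mathcal{J}(Z,\|pB\|)=f_*\bigl(\mathcal{J}(Y,\|f^*(pB)\|)\otimes\omega_{Y/Z}\bigr),
\end{equation*}
so $(\dagger)$ becomes the assertion that $\mathcal{J}(Y,\|f^*(pB)\|)\subseteq f^{-1}f_*\bigl(\mathcal{J}(Y,\|f^*(pB)\|)\otimes\omega_{Y/Z}\bigr)\cdot\mathcal{O}_Y$, i.e.\ that local sections of the left-hand multiplier ideal come from global-over-$Z$ sections after the $\omega_{Y/Z}$-twist. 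This is an Ohsawa--Takegoshi/restriction-theorem type extension statement; the standard proof of the cited \cite[Example 9.5.8]{LazII} factors $f$ through the graph embedding $Y\hookrightarrow Y\times Z$ followed by the smooth projection to $Z$, invokes equality under smooth pullback, and then applies the Restriction Theorem for multiplier ideals to the embedding. Your sketch neither invokes the Restriction Theorem nor substitutes an argument for it, and indeed you concede that ``the argument must absorb the $\omega_{Y/Z}$-twist'' without saying how. As written, $(\dagger)$ is asserted, not proved, so the proposal is incomplete at its key step.
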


\begin{proof}
By equation (\ref{fondamentale}), taking preimages under $f$, we have that $(\mathcal{J}(Z, \|pB\|)(-N))\cdot \mathcal{O}_Y \subseteq \mathfrak{b}(|pB|) \cdot \mathcal{O}_Y$. Clearly $(\mathcal{J}(Z, \|pB\|)(-N))\cdot \mathcal{O}_Y = (\mathcal{J}(Z, \|pB\|) \cdot \mathcal{O}_Y) (\mathcal{O}_Z(-N) \cdot \mathcal{O}_Y)$ and, since $\mathcal{O}_Z(-N)$ is an invertible sheaf, $(\mathcal{O}_Z(-N) \cdot \mathcal{O}_Y)= \mathcal{O}_Y(-f^*(N))$ (see \cite[Caution II.7.12.2]{Hartshorne}). 
Moreover, given the fact that $f$ is an algebraic fiber space, $f^*|pB|=|f^*(pB)|$ and therefore $\mathfrak{b}(|pB|) \cdot \mathcal{O}_Y= \mathfrak{b}(|f^*(pB)|)$.
By \cite[Example 9.5.8]{LazII} we have that $\mathcal{J}(Y, \|f^*(pB)\|) \subseteq \mathcal{J}(Z, \|pB\|) \cdot \mathcal{O}_Y$. Actually Example 9.5.8 applies to multiplier ideals, but since $f^*|pB|=|f^*(pB)|$ the result extends to asymptotic multiplier ideals as well.
Putting everything together leads to the statement.
\end{proof}

In particular we see that if $D$ satisfies (\ref{fondamentale}) on $Y$ then the preimage of $D$ satisfies (\ref{fondamentale}) on any smooth projective variety that birationally dominates $Y$. Hence the following holds:

\begin{corollary} \label{prepinco}
Let $X', X$ be normal projective varieties and let $f: X' \rightarrow X$ be a surjective morphism with connected fibers. If $D$ verifies \textnormal{b-($\star$)} on $X$ then $f^*(D)$ verifies \textnormal{b-($\star$)} on $X'$. Hence \textnormal{b-($\star$)} is a birational property.
\end{corollary}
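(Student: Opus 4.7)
The aim is to produce, from the data of a smooth projective $Y$ with $\mu : Y \to X$ birational and $\mu^*D$ satisfying $(\star)$ on $Y$ with effective witness divisor $N$, a smooth projective $Y'$ with a birational $\mu' : Y' \to X'$ and an effective witness for $(\mu')^*(f^*D)$ on $Y'$. That $f^*D$ is Cartier is clear, and $\kappa(X', f^*D) \ge \kappa(X, D) \ge 0$ since surjectivity of $f$ makes the pullback of sections injective. The natural construction is to take $Y'$ to be a resolution of singularities (Hironaka) of the closure of the graph of the rational map $\mu^{-1} \circ f : X' \dashrightarrow Y$ inside $X' \times Y$; this produces $Y'$ smooth projective together with a birational projection $\mu' : Y' \to X'$ and a morphism $g : Y' \to Y$ satisfying $f \circ \mu' = \mu \circ g$. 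The strategy is then to apply Lemma \ref{quasibig} to $g$ in order to transfer $(\star)$ from $\mu^*D$ to $g^*\mu^*D = (\mu')^*f^*D$, using $g^*N$ as the witness.

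The main obstacle is to verify that $g$ is surjective with connected fibers, as required by Lemma \ref{quasibig}. Surjectivity is immediate from the commutative square and the birationality of $\mu$. For the connectedness of fibers, first observe that $f \circ \mu' = \mu \circ g$ has connected fibers: $f$ does by hypothesis, $\mu'$ does by Zariski's main theorem (being birational from a smooth variety to a normal one), and connected fibers are preserved under composition of proper surjective morphisms. Passing to the Stein factorization $g = \sigma \circ g_0$ with $g_0 : Y' \to Y_0$ having connected fibers onto a normal $Y_0$ and $\sigma : Y_0 \to Y$ finite, one deduces that $\mu \circ \sigma$ has connected fibers as well, since for every $x \in X$ the fiber $(\mu \circ \sigma)^{-1}(x) = g_0\bigl((\mu \circ g)^{-1}(x)\bigr)$ is the continuous image of a connected set. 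Now for any $y \in Y$ in the isomorphism locus of $\mu$ we have $\sigma^{-1}(y) = (\mu \circ \sigma)^{-1}(\mu(y))$, which is simultaneously connected and finite, hence a single point. Therefore $\sigma$ is a finite birational morphism onto the normal variety $Y$, and by Zariski's main theorem it is an isomorphism, so $g$ has connected fibers.

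With $g$ an algebraic fiber space, Lemma \ref{quasibig} applies with $B = \mu^*D$, yielding, for all $p \in \mathbb{N}(D) = \mathbb{N}(\mu^*D)$,
$$\mathcal{J}\bigl(Y', \|g^*(p\mu^*D)\|\bigr)(-g^*N) \subseteq \mathfrak{b}\bigl(|g^*(p\mu^*D)|\bigr).$$
Since $g^*(p\mu^*D) = p\,(\mu')^*(f^*D)$, this exhibits $f^*D$ as satisfying b-$(\star)$ on $X'$ through $\mu' : Y' \to X'$ with witness $g^*N$. For the final assertion, a birational morphism between normal projective varieties is automatically surjective with connected fibers (Zariski's main theorem), so the preceding implication gives one direction of birational invariance; the converse is immediate, since if $\mu' : Y' \to X'$ witnesses b-$(\star)$ for $f^*D$ and $f$ is birational, then $f \circ \mu' : Y' \to X$ is birational and witnesses b-$(\star)$ for $D$ on $X$.
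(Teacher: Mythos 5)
Your proof is correct and follows the paper's intended route: resolve to a smooth model $Y'$ dominating both $X'$ and the witness $Y$, then transfer property $(\star)$ along $g : Y' \to Y$ via Lemma~\ref{quasibig}. The paper leaves both the construction of $g$ and the verification that it is surjective with connected fibers implicit (it only remarks on the case of birational dominance); your Stein-factorization argument supplies exactly the missing detail, and it is correct.
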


The following two easy lemmas explain that to prove that $D$ verifies b-$(\star)$, we just need to check (\ref{fondamentale}) for a sub-semigroup of $\mathbb{N}(D)$:

\begin{lemma} \label{numerofinito}
Let $Y$ be a smooth projective variety and let $D$ be a divisor such that $\kappa(Y,D) \geq 0$. Let $\mathbb{P}$ be a finite subset of $\mathbb{N}(D)$. Then there exists an effective divisor $N_\mathbb{P}$ such that $$\mathcal{J}(Y, \|pD\|)(-N_\mathbb{P}) \subseteq \mathfrak{b}(|pD|),$$ for every $p \in \mathbb{P}$.
\end{lemma}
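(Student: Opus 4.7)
The plan is to exploit the finiteness of $\mathbb{P}$ and deduce the statement from the essentially trivial case of a single $p$. For each $p \in \mathbb{P}$ individually the inclusion is free of content: since $p \in \mathbb{N}(D)$ there exists a nonzero section $s_p \in H^0(Y,pD)$, whose divisor of zeros $N_p := \mathrm{div}(s_p)$ is an effective divisor belonging to the linear system $|pD|$. By the very definition of the base ideal $\mathfrak{b}(|pD|)$ as the image of the evaluation map $H^0(Y,pD) \otimes \mathcal{O}_Y(-pD) \to \mathcal{O}_Y$, multiplication by $s_p$ provides an inclusion $\mathcal{O}_Y(-N_p) \hookrightarrow \mathfrak{b}(|pD|)$ of ideal sheaves inside $\mathcal{O}_Y$.

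Having produced one such $N_p$ for every $p \in \mathbb{P}$, I would simply set
\[
 N_\mathbb{P} := \sum_{p \in \mathbb{P}} N_p,
\]
which is effective because the sum is finite. For each fixed $p \in \mathbb{P}$, the effectiveness of $N_\mathbb{P} - N_p$ yields the ideal-sheaf inclusion $\mathcal{O}_Y(-N_\mathbb{P}) \subseteq \mathcal{O}_Y(-N_p)$, and combining this with the inclusion of the previous paragraph gives
\[
 \mathcal{J}(Y,\|pD\|) \cdot \mathcal{O}_Y(-N_\mathbb{P}) \;\subseteq\; \mathcal{O}_Y(-N_\mathbb{P}) \;\subseteq\; \mathcal{O}_Y(-N_p) \;\subseteq\; \mathfrak{b}(|pD|),
\]
where the first inclusion uses only that $\mathcal{J}(Y,\|pD\|)$ is a subsheaf of $\mathcal{O}_Y$ as an ideal sheaf.

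There is no real obstacle in this proof: all of its content is absorbed into the finiteness hypothesis on $\mathbb{P}$, and the asymptotic multiplier ideal is never actually used beyond being contained in $\mathcal{O}_Y$. The lemma should be read as a warm-up highlighting what makes the true property b-$(\star)$ nontrivial, namely the requirement that a \emph{single} effective divisor $N$ control the gap between $\mathcal{J}(Y,\|pD\|)$ and $\mathfrak{b}(|pD|)$ \emph{uniformly} as $p$ ranges over the entire (infinite) semigroup $\mathbb{N}(D)$. The subsequent work in this section (Theorem \ref{thm:esempi} and its underlying Propositions \ref{goodmoriwaki}, \ref{contideali}) is devoted precisely to this uniformity problem, for which the naive concatenation above clearly fails.
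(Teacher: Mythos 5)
Your proof is correct and is essentially identical to the paper's own argument: choose $D_p\in|pD|$ for each $p\in\mathbb{P}$, set $N_\mathbb{P}:=\sum_{p\in\mathbb{P}}D_p$, and observe that $\mathcal{J}(Y,\|pD\|)\cdot\mathcal{O}_Y(-N_\mathbb{P})\subseteq\mathcal{O}_Y(-N_\mathbb{P})\subseteq\mathcal{O}_Y(-D_p)\subseteq\mathfrak{b}(|pD|)$.
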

\begin{proof}
For every $p \in \mathbb{N}(D)$ choose $D_p \in |pD|$. Clearly $\mathcal{O}_Y(-D_p) \subseteq \mathfrak{b}(|pD|)$. At this point just take $N_{\mathbb{P}}:= \sum_{p \in \mathbb{P}} D_p$. Since for every $p \in \mathbb{P}$, $\mathcal{O}_Y(-N_{\mathbb{P}}) \subseteq \mathfrak{b}(|pD|)$ then we have also that $\mathcal{J}(Y,\|pD\|)\mathcal{O}_Y(-N_{\mathbb{P}}) \subseteq \mathfrak{b}(|pD|)$.
\end{proof}

\begin{lemma} \label{multipli}
Let $Y$ be a smooth projective variety and let $D$ be a divisor on $Y$ such that $\kappa(X,D)\geq 0$. Let $p_0 \in \mathbb{N}(D)$ be such that $\mathbb{N}(D) \cap \{m : m \geq p_0\} = e\mathbb{N} \cap \{m : m \geq p_0\}$, where $e$ is the exponent of $D$. If there exists an effective divisor $\overline{N}$ on $Y$ such that $\mathcal{J}(Y,\|mp_0D\|)(-\overline{N}) \subseteq \mathfrak{b}(|mp_0D|)$ for every $m \geq 1$, then there exists an effective divisor $N$ such that $\mathcal{J}(Y,\|pD\|)(-N) \subseteq \mathfrak{b}(|pD|)$ for every $p  \geq p_0$, $ p \in \mathbb{N}(D)$.
\end{lemma}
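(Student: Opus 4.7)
The plan is to combine two inputs via the subadditivity of asymptotic multiplier ideals: the hypothesis (which controls multiples of $p_0$) and Lemma \ref{numerofinito} (which controls any prescribed finite subset of $\mathbb{N}(D)$). More precisely, I would first isolate the initial interval
\[
\mathbb{P}:=\mathbb{N}(D)\cap[p_0,2p_0)=\{p_0,p_0+e,\ldots\}\cap[p_0,2p_0),
\]
which by the assumption on $p_0$ is a finite subset of $\mathbb{N}(D)$. Lemma \ref{numerofinito} then produces an effective divisor $N_{\mathbb{P}}$ such that $\mathcal{J}(Y,\|pD\|)(-N_{\mathbb{P}})\subseteq \mathfrak{b}(|pD|)$ for every $p\in \mathbb{P}$.

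For $p\in\mathbb{N}(D)$ with $p\ge 2p_0$, I perform the decomposition $p=mp_0+r$, with $m:=\lfloor p/p_0\rfloor-1\ge 1$ and $r:=p-mp_0$. A direct check gives $r\in[p_0,2p_0)$ and, since both $p$ and $mp_0$ are multiples of $e$, so is $r$; hence $r\in \mathbb{P}$. The key ingredient is now the subadditivity theorem for asymptotic multiplier ideals on the smooth variety $Y$ (\cite[Theorem 11.2.3]{LazII}) applied to $mp_0D$ and $rD$:
\[
\mathcal{J}(Y,\|pD\|)=\mathcal{J}(Y,\|mp_0D+rD\|)\subseteq \mathcal{J}(Y,\|mp_0D\|)\cdot \mathcal{J}(Y,\|rD\|).
\]
Tensoring with $\mathcal{O}_Y(-\overline{N}-N_{\mathbb{P}})$ and feeding in the hypothesis for the first factor and Lemma \ref{numerofinito} (applied to $r\in\mathbb{P}$) for the second factor, one obtains
\[
\mathcal{J}(Y,\|pD\|)(-\overline{N}-N_{\mathbb{P}})\subseteq \mathfrak{b}(|mp_0D|)\cdot \mathfrak{b}(|rD|)\subseteq \mathfrak{b}(|pD|),
\]
the last inclusion being the elementary fact that $\mathfrak{b}(|aD|)\cdot \mathfrak{b}(|bD|)\subseteq \mathfrak{b}(|(a+b)D|)$.

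Setting $N:=\overline{N}+N_{\mathbb{P}}$ then closes the case $p\ge 2p_0$, and for $p\in \mathbb{P}$ the inclusion $\mathcal{J}(Y,\|pD\|)(-N)\subseteq \mathcal{J}(Y,\|pD\|)(-N_{\mathbb{P}})\subseteq \mathfrak{b}(|pD|)$ follows immediately since $N\ge N_{\mathbb{P}}$. I do not anticipate any serious obstacle in this scheme: the only non-trivial input is the subadditivity theorem of Demailly--Ein--Lazarsfeld, which applies because $Y$ is smooth and because both $mp_0$ and $r$ lie in $\mathbb{N}(D)$, making the two asymptotic multiplier ideals on the right-hand side well-defined; everything else is bookkeeping with base ideals.
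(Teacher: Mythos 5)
Your proof is correct, and it follows essentially the same scheme as the paper's: decompose $p$ as $mp_0$ plus a remainder lying in the finite interval $[p_0,2p_0)\cap\N(D)$, control the $mp_0$-part by the hypothesis and the remainder by choosing auxiliary sections (your invocation of Lemma~\ref{numerofinito} is exactly the paper's inline construction of $\sum_r D_{p_0+re}$), and conclude via $\mathfrak{b}(|aD|)\cdot\mathfrak{b}(|bD|)\subseteq\mathfrak{b}(|(a+b)D|)$.

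The one genuine deviation is in how you compare $\cJ(Y,\|pD\|)$ with $\cJ(Y,\|mp_0D\|)$. The paper uses only the elementary monotonicity of asymptotic multiplier ideals, $\cJ(Y,\|pD\|)\subseteq\cJ(Y,\|mp_0D\|)$ for $p\ge mp_0$ (\cite[Theorem 11.1.8(ii)]{LazII}), and then feeds the remainder term in at the level of base ideals alone. You instead invoke subadditivity, $\cJ(Y,\|(mp_0+r)D\|)\subseteq\cJ(Y,\|mp_0D\|)\cdot\cJ(Y,\|rD\|)$, which is a stronger (and slightly deeper) input; the correct citation there is the asymptotic version \cite[Corollary 11.2.4]{LazII} rather than \cite[Theorem 11.2.3]{LazII}, the latter being the subadditivity theorem for ordinary multiplier ideals of ideal sheaves, from which the asymptotic version is derived. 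Both routes are valid and yield the same $N=\overline N+(\textrm{divisor controlling the finite range})$; the paper's choice of monotonicity is a touch more economical, while your split into the base case $p\in[p_0,2p_0)$ and the inductive case $p\ge 2p_0$ avoids the slightly awkward $m=0$ degenerate instance of the hypothesis that the paper's uniform decomposition $p=mp_0+p_0+re$, $m\ge 0$, silently absorbs.
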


\begin{proof}
First of all, for every $p \in \mathbb{N}(D)$, choose $D_p \in |pD|$. By definition of $e=e(D)$ there exists an $s \in \mathbb{N}$ such that $p_0=se$. When $p \geq p_0$ write $p$ as $mp_0+p_0+re$, with $0 \leq r < s$ and a certain $m \geq 0$. We have that $\mathcal{J}(Y, \|pD\|) \subseteq \mathcal{J}(Y, \|mp_0D\|)$ by \cite[Theorem 11.1.8(ii)]{LazII}. 
Moreover, by hypothesis, $\mathcal{J}(Y, \|mp_0D\|)(-\overline{N}) \subseteq \mathfrak{b}(|mp_0D|)$ and, for every $0 \leq r < s$, $\mathcal{O}_Y(- D_{p_0+re}) \subseteq \mathfrak{b}(|(p_0+re)D|)$. Consider also that $ \mathfrak{b}(|mp_0D|) \cdot \mathfrak{b}(|(p_0+re)D|) \subseteq \mathfrak{b}(|(mp_0+p_0+re)D|)=\mathfrak{b}(|pD|)$ (see \cite[Example 2.4.11]{LazII}). The thesis now follows by taking $$N:= \overline{N}+\sum_{r=0}^{s-1} D_{p_0+re}.$$
\end{proof}
From the two previous lemmas we immediately deduce the following.

\begin{corollary}\label{cor:stellahom}
Let $Y$ be a smooth projective variety and let $D$ be a divisor on $Y$ such that $\kappa(X,D)\geq 0$. The divisor $D$ verifies b-\textnormal{(}$\star$\textnormal{)} $\Leftrightarrow$ the divisor $mD$ verifies b-\textnormal{(}$\star$\textnormal{)}, for some $m\geq 1$.
\end{corollary}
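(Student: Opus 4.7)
Since $Y$ is smooth and both directions are about the same divisor on the same variety, my plan is to keep the birational model $\mu: Y' \to Y$ the same in both directions and to adjust only the effective divisor $N$ that serves as a witness of property b-$(\star)$. The two directions are then essentially unwinding of definitions combined with Lemmas \ref{numerofinito} and \ref{multipli}.

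For the implication $(\Rightarrow)$, suppose $D$ satisfies b-$(\star)$ via $\mu: Y' \to Y$ and an effective divisor $N$ on $Y'$. For any $p \in \mathbb{N}(mD)$, the integer $pm$ lies in $\mathbb{N}(D)$; since $\mu^*(p \cdot mD) = \mu^*(pmD)$ by definition, the inclusion
$$
\mathcal{J}(Y', \|\mu^*(p \cdot mD)\|)(-N) \subseteq \mathfrak{b}(|\mu^*(p \cdot mD)|)
$$
is exactly the one hypothesized for $D$ at the index $pm$. Hence $(\mu, N)$ also witnesses b-$(\star)$ for $mD$.

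For the converse $(\Leftarrow)$, suppose $mD$ satisfies b-$(\star)$ via $\mu: Y'\to Y$ and an effective divisor $N_m$; equivalently, for every $q \in \mathbb{N}(D)$ which is a multiple of $m$, one has $\mathcal{J}(Y', \|\mu^*(qD)\|)(-N_m) \subseteq \mathfrak{b}(|\mu^*(qD)|)$. To extend this to every $q \in \mathbb{N}(D)$, apply Lemma \ref{multipli} to the divisor $\mu^*D$ on $Y'$ (which has the same Iitaka dimension and exponent $e = e(D)$ as $D$). Choose $p_0$ to be a multiple of $m$ that lies in both the stable part of $\mathbb{N}(D)$ (so that $\mathbb{N}(D)\cap\{n\geq p_0\} = e\mathbb{N}\cap\{n\geq p_0\}$) and the stable part of $\mathbb{N}(mD)$; then for every $j \geq 1$, the integer $jp_0$ is a multiple of $m$ in $\mathbb{N}(D)$, so the hypothesis on $N_m$ gives $\mathcal{J}(Y', \|\mu^*(jp_0 D)\|)(-N_m) \subseteq \mathfrak{b}(|\mu^*(jp_0 D)|)$. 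Lemma \ref{multipli} then produces an effective $N'$ on $Y'$ satisfying the inclusion for every $p \in \mathbb{N}(D)$ with $p \geq p_0$. For the finitely many indices $p \in \mathbb{N}(D)$ with $p < p_0$, Lemma \ref{numerofinito} provides an effective $N''$ that handles them. Setting $N := N' + N''$ yields a single effective divisor witnessing b-$(\star)$ for $D$ on $(Y', \mu)$.

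The only delicate point is the simultaneous choice of $p_0$: it must lie in the eventual arithmetic progression of $\mathbb{N}(D)$, be a multiple of $m$, and be large enough that all its positive integer multiples sit in $\mathbb{N}(mD)$. This is a standard semigroup exercise that is feasible precisely because $\kod(Y,D)\geq 0$ guarantees that both $\mathbb{N}(D)$ and $\mathbb{N}(mD)$ are infinite and eventually arithmetic progressions.
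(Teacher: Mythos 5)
Your proof is correct and is essentially a fleshed-out version of what the paper leaves implicit: the corollary is stated as an immediate consequence of Lemmas \ref{numerofinito} and \ref{multipli}, and your argument uses precisely those two lemmas in the expected way (the easy implication by reindexing $p\mapsto pm$, the converse by the semigroup choice of $p_0$ feeding into Lemma \ref{multipli} and then cleaning up the finitely many small indices with Lemma \ref{numerofinito}).
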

\begin{lemma} \label{bordo}
Let $Y$ be a smooth projective variety and let $D$ be a divisor on $Y$ such that $\kappa(X,D) \geq 0$. Given any $\mathbb{Q}$-divisor $\Delta$ on $Y$ there exists an effective divisor $M$ on $Y$ such that $\mathcal{J}((Y,\Delta);\|pD\|)(-M) \subseteq \mathcal{J}(X,\|pD\|)$ for every $p$.
\end{lemma}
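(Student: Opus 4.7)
The plan is to reduce the statement to a standard twist-and-monotonicity comparison of multiplier ideals. First I would write $\Delta = A - B$ with $A, B$ effective $\mathbb{Q}$-divisors having no common component. Since $Y$ is smooth, $M := \lceil B \rceil$ is an effective integral Cartier divisor on $Y$, and by construction $\Delta + M = A + (M - B)$ is an effective $\mathbb{Q}$-divisor. Crucially, this $M$ depends only on $\Delta$, not on $p$.

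The heart of the argument is the following twist identity, valid for any complete linear series $|L|$ on $Y$ and any integer $k \geq 1$:
\[
\cJ\bigl((Y,\Delta);\tfrac{1}{k}|L|\bigr)\otimes \cO_Y(-M)=\cJ\bigl((Y,\Delta+M);\tfrac{1}{k}|L|\bigr).
\]
To establish this, I would take a common log-resolution $\mu: Y' \to Y$ of $(Y,\Delta)$ and of $|L|$, write $\mu^*|L| = |W| + F$ with $W$ base-point-free, and use that $\mu^*M$ is an integral Cartier divisor, so that it can be pulled out of the round-down in the defining pushforward $\mu_*\cO_{Y'}(K_{Y'/Y} - \lfloor \mu^*(\Delta+M) + \tfrac{1}{k}F\rfloor)$; the projection formula finishes the verification. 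Since $\Delta + M$ is effective, monotonicity of multiplier ideals with respect to the boundary then gives $\cJ((Y,\Delta+M);\tfrac{1}{k}|L|)\subseteq \cJ(Y;\tfrac{1}{k}|L|)$.

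To conclude, I would apply the preceding with $|L| = |kpD|$, and choose $k$ sufficiently divisible so that the defining increasing families of both $\cJ((Y,\Delta);\|pD\|)$ and $\cJ((Y,\Delta+M);\|pD\|)$ stabilize at level $k$; such a common $k$ exists for any fixed $p$ with $h^0(Y,pD) \neq 0$. Combining the twist identity with the monotonicity inclusion then yields
\[
\cJ((Y,\Delta);\|pD\|)\otimes \cO_Y(-M) = \cJ((Y,\Delta+M);\|pD\|) \subseteq \cJ(Y;\|pD\|),
\]
which is the desired statement.

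The main technical point to handle carefully is that the twist identity, which is an elementary computation for each individual $k$, must be shown to pass to the asymptotic multiplier ideal. This amounts to checking that a common stabilization level $k$ can be chosen for the two pairs $(Y,\Delta)$ and $(Y,\Delta+M)$, but since both families are defined from the same graded sequence $\{|kpD|\}_k$ on the same underlying variety, this is immediate once one works on a log-resolution dominating both log pairs---and no uniformity in $p$ is required because $M$ itself was fixed independently of $p$ from the outset.
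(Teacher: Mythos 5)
Your proof is correct and takes essentially the same route as the paper's. The paper chooses $M$ effective with $M \geq -\Delta$ (which is what your $M=\lceil B\rceil$ is) and then cites \cite[Example 9.3.57(i), Proposition 9.2.31, Proposition 9.2.32(i)]{LazII} to carry out exactly the same twist-and-monotonicity argument that you re-derive directly from a log-resolution; the only substantive difference is that you spell out the stabilization-level bookkeeping for the asymptotic ideal, which the paper leaves implicit in the cited references.
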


\begin{proof}
Take $M \geq -\Delta$ and such that $M$ is effective (we are not supposing that $\Delta \geq 0$). Then by \cite[Example 9.3.57(i)]{LazII} $\mathcal{J}((X,\Delta);\|pD\|)(-M)=\mathcal{J}(X, \Delta+ \|pD\|)(-M)$. Moreover the latter, by \cite[Proposition 9.2.31]{LazII}, is just $\mathcal{J}(X, \Delta+M+\|pD\|)$. We can conclude noticing that, since $M + \Delta \geq 0$, $\mathcal{J}(X, \Delta+M+\|pD\|) \subseteq \mathcal{J}(X,\|pD\|)$ by \cite[Proposition 9.2.32(i)]{LazII} .
\end{proof}

\begin{remark} \label{bordormk}
Lemma \ref{bordo} means, in particular, that if equation (\ref{fondamentale}) holds on a smooth variety $Y$ then, summing $M$ to $N$, the same equation holds for the pair $(Y,\Delta)$, for any $\Delta$.
 \hfill $\square$ \end{remark}

Given a normal projective $X$ and a Cartier divisor $D$, even if we know that (\ref{fondamentale}) holds on a birational modification of $X$, we cannot conclude that it holds on $X$ itself. Anyway given the previous corollary and remark, we can state the following:

\begin{corollary} \label{coppia}
Let $X$ be a normal projective variety and let $D$ be a divisor on $X$ satisfying property \textnormal{b-($\star$)}. Suppose that there exists a Weil $\mathbb{Q}$-divisor $\Delta_X$ on $X$ such that $(X,\Delta_X)$ is a pair. Then there exists an effective Cartier divisor $N$ on $X$ such that $$\mathcal{J}((X,\Delta_X);\|pD\|)(-N) \subseteq \overline{\mathfrak{b}(|pD|)}$$ for every $p \in \mathbb{N}(D)$. 
\end{corollary}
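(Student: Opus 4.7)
\textbf{Proof plan for Corollary \ref{coppia}.} The strategy is to transfer the already-established inclusion on the birational model $Y$ supplied by property b-$(\star)$ down to $X$, first absorbing the boundary $\Delta_X$ into the multiplier ideal on $Y$, and then pushing forward. Since b-$(\star)$ is birationally invariant (Corollary \ref{prepinco}), I may assume that the morphism $\mu: Y \to X$ furnished by the definition of b-$(\star)$ is in addition a log resolution of the pair $(X,\Delta_X)$. Let $N_Y$ be the effective divisor on $Y$ such that
$$\cJ(Y, \|\mu^*(pD)\|)(-N_Y)\subseteq \mathfrak{b}(|\mu^*(pD)|), \quad \forall p\in \N(D).$$

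\emph{Step 1: absorb the pair structure.} Define $\Delta_Y$ on $Y$ by the crepant formula $K_Y+\Delta_Y=\mu^*(K_X+\Delta_X)$. Applying Lemma \ref{bordo} (and Remark \ref{bordormk}) to the $\Q$-divisor $\Delta_Y$ gives an effective divisor $M$ on $Y$ with
$$\cJ((Y,\Delta_Y);\|\mu^*(pD)\|)(-M)\subseteq \cJ(Y,\|\mu^*(pD)\|),$$
for all $p$. Combining with the b-$(\star)$ inclusion yields
$$\cJ((Y,\Delta_Y);\|\mu^*(pD)\|)(-M-N_Y)\subseteq \mathfrak{b}(|\mu^*(pD)|), \quad \forall p\in \N(D).$$

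\emph{Step 2: choose $N$ on $X$ and pull it back past $M+N_Y$.} By Lemma \ref{dasotto}, applied to the effective Weil divisor $M+N_Y$ on $Y$, there is an effective Cartier divisor $N$ on $X$ with $\mu^*N\geq M+N_Y$. Then $\mathcal{O}_Y(-\mu^*N)\subseteq \mathcal{O}_Y(-M-N_Y)$, so
$$\cJ((Y,\Delta_Y);\|\mu^*(pD)\|)\otimes \mathcal{O}_Y(-\mu^*N) \subseteq \mathfrak{b}(|\mu^*(pD)|), \quad \forall p\in\N(D).$$

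\emph{Step 3: push forward by $\mu_*$.} Because $\mu$ is a log resolution of $(X,\Delta_X)$ and computes $\cJ((X,\Delta_X);\|pD\|)$, the standard definition of the asymptotic multiplier ideal of a pair (\cite[Definition 11.1.2, Remark 11.1.13]{LazII}) gives
$$\mu_*\cJ((Y,\Delta_Y);\|\mu^*(pD)\|)=\cJ((X,\Delta_X);\|pD\|).$$
Applying $\mu_*$ to the inclusion of Step 2 and using the projection formula on the left-hand side yields $\cJ((X,\Delta_X);\|pD\|)(-N)$. On the right-hand side, since $\mu_*\mathcal{O}_Y=\mathcal{O}_X$ and $\mathfrak{b}(|\mu^*(pD)|)=\mathfrak{b}(|pD|)\cdot\mathcal{O}_Y$ (as $H^0(Y,\mu^*(pD))=H^0(X,pD)$), Lemma \ref{chiusura integrale} gives
$$\mu_*\mathfrak{b}(|\mu^*(pD)|)=\mu_*\big(\mathfrak{b}(|pD|)\cdot\mathcal{O}_Y\big)\subseteq \overline{\mathfrak{b}(|pD|)}.$$
Since $\mu_*$ preserves inclusions, we conclude $\cJ((X,\Delta_X);\|pD\|)(-N)\subseteq \overline{\mathfrak{b}(|pD|)}$, as required.

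The main subtlety, and the one step I would check most carefully, is the compatibility $\mu_*\cJ((Y,\Delta_Y);\|\mu^*(pD)\|)=\cJ((X,\Delta_X);\|pD\|)$: this is the reason one must choose $\mu$ to be a log resolution of $(X,\Delta_X)$ and take $\Delta_Y$ via the crepant formula, so that the multiplier ideal of the pair on $X$ is literally defined by this pushforward. Everything else is a routine combination of the lemmas on birational morphisms and ideals already proved in \S 2.2.
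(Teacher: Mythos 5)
Your proof is correct and follows the paper's argument step by step: define $\Delta_Y$ via the crepant formula, absorb it using Lemma \ref{bordo}/Remark \ref{bordormk}, replace the resulting auxiliary effective divisor by a pullback $\mu^*N$ via Lemma \ref{dasotto}, and push forward to $X$ using projection formula and Lemma \ref{chiusura integrale}. One minor imprecision in Step 3: the identity $\mu_*\cJ((Y,\Delta_Y);\|\mu^*(pD)\|)=\cJ((X,\Delta_X);\|pD\|)$ does not follow ``literally from the definition'' (a single log resolution of $(X,\Delta_X)$ cannot compute the asymptotic multiplier ideals of $\|pD\|$ for all $p$ simultaneously, since one must also resolve $\mathfrak{b}(|kpD|)$ for suitable $k$ varying with $p$); the correct justification is the birational transformation rule for (asymptotic) multiplier ideals of pairs, which is exactly what the paper cites (\cite[Proposition 9.3.62]{LazII}, \cite[Lemma 2.18]{LS}).
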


\begin{proof}
By definition there exists a smooth projective variety $Y$, an effective divisor $\overline{N}$ on $Y$ and a birational morphism $\mu: Y \rightarrow X$ such that $\mu^*(pD)$ satisfies (\ref{fondamentale}): $\mathcal{J}(Y,\|\mu^*(pD)\|)(-\overline{N}) \subseteq \mathfrak{b}(|\mu^*(pD)|)$ for every $p \in \mathbb{N}(D)$. Define the divisor $\Delta_Y$ on $Y$ so that $\mu^*(K_X+\Delta_X) \equiv K_Y+\Delta_Y$ and $\mu_*(\Delta_Y)=\Delta_X$. (For example, if $X$ is smooth and $\Delta_X=0$ then $\Delta_Y:=-K_{Y/X}$).
Applying Remark \ref{bordormk} and taking an $\tilde{N}$ we know that, for every $p \in \mathbb{N}(D)$, $\mathcal{J}((Y,\Delta_Y); \|\mu^*(pD)\|)(-\tilde{N}) \subseteq \mathfrak{b}(|\mu^*(pD)|)$. By Lemma \ref{dasotto} we can assume that $\tilde{N}$ is $\mu^*(N)$, for a certain Cartier effective divisor $N$ on $X$.
At this point just apply projection formula, birational transformation rule (see \cite[Proposition 9.3.62]{LazII} or - for asymptotic multiplier ideals - \cite[Lemma 2.18]{LS}) and Lemma \ref{chiusura integrale} (just recall that $\mathfrak{b}(|\mu^*(pD)|)=\mathfrak{b}(|pD|)\cdot \mathcal{O}_Y$).
\end{proof}

If $X$ is smooth and $\Delta_X=0$, considering subadditivity and Brian\c con--Skoda's theorem, we can say something more:

\begin{corollary} \label{skoda}
Let $X$ be a smooth projective variety of dimension $n$ and let $D$ be a divisor that verifies property \textnormal{b-$(\star)$}. Then there exists an effective divisor $M$ on $X$ such that $$\mathcal{J}(X,\|npD\|)(-M) \subseteq \mathfrak{b}(|pD|)$$ for every $p \in \mathbb{N}(D)$.
\end{corollary}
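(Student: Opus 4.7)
The plan is to combine the birational version of property $(\star)$ with the two standard tools flagged in the statement, namely subadditivity of asymptotic multiplier ideals and the Briançon--Skoda theorem. The reason the factor $n$ appears in front of $pD$ is precisely the $n=\dim X$ that comes out of Briançon--Skoda, and the extra twist $M$ arises as $n$ times the twist coming from property b-$(\star)$.

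First I would push property b-$(\star)$ back to $X$. Since $X$ is smooth, we may take $\Delta_X=0$ and apply Corollary \ref{coppia} directly, producing an effective Cartier divisor $\tilde N$ on $X$ such that
$$\mathcal{J}(X,\|pD\|)(-\tilde N)\subseteq \overline{\mathfrak{b}(|pD|)}\qquad \forall\,p\in\mathbb{N}(D).$$
Notice that $\mathcal{J}(X,\|npD\|)$ is well defined because $p\in \mathbb{N}(D)$ implies $np\in \mathbb{N}(D)$.

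Next, I would invoke subadditivity of asymptotic multiplier ideals on a smooth variety (\cite[Corollary 11.2.4]{LazII}), applied iteratively to $pD+pD+\cdots+pD$, to obtain $\mathcal{J}(X,\|npD\|)\subseteq \mathcal{J}(X,\|pD\|)^n$. Since $\mathcal{O}_X(-\tilde N)$ is invertible, twisting commutes with taking $n$-th powers of ideal sheaves, so
$$\mathcal{J}(X,\|npD\|)(-n\tilde N)\subseteq \bigl(\mathcal{J}(X,\|pD\|)(-\tilde N)\bigr)^n\subseteq \overline{\mathfrak{b}(|pD|)}^n.$$

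Finally I would apply the Briançon--Skoda theorem in its classical form, $\overline{\mathfrak{a}^n}\subseteq \mathfrak{a}$ for any ideal $\mathfrak{a}$ on a smooth variety of dimension $n$, combined with the elementary inclusion $\overline{\mathfrak{a}}^n\subseteq \overline{\mathfrak{a}^n}$ (which follows from $\overline{\mathfrak{a}}\cdot \overline{\mathfrak{b}}\subseteq \overline{\mathfrak{a}\mathfrak{b}}$). Applied to $\mathfrak{a}=\mathfrak{b}(|pD|)$ this gives $\overline{\mathfrak{b}(|pD|)}^n\subseteq \mathfrak{b}(|pD|)$. Setting $M:=n\tilde N$ and chaining the inclusions yields the statement.

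There is no single step that poses a genuine obstacle, since each ingredient is off-the-shelf; the two points that require care are (i) that the subadditivity statement is available in the asymptotic setting and in the form $\mathcal{J}(\|npD\|)\subseteq \mathcal{J}(\|pD\|)^n$, which is exactly the integer-multiple case, and (ii) that the exponent appearing in Briançon--Skoda matches $\dim X$, which is what forces the asymmetric jump from $p$ to $np$ in the conclusion and explains why we cannot replace b-$(\star)$ by the plain conclusion of Corollary \ref{coppia} without losing a factor of $n$.
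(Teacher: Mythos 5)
Your proof is correct and follows essentially the same route as the paper's: apply Corollary \ref{coppia} to get a twist $\tilde N$ with $\mathcal{J}(X,\|pD\|)(-\tilde N)\subseteq\overline{\mathfrak{b}(|pD|)}$, then combine subadditivity $\mathcal{J}(X,\|npD\|)\subseteq\mathcal{J}(X,\|pD\|)^n$ with Brian\c con--Skoda to conclude with $M=n\tilde N$. The only difference is cosmetic (you apply subadditivity before Brian\c con--Skoda, the paper does it the other way around), so the two arguments are the same.
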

\begin{proof}
Consider the outcome of Corollary \ref{coppia} and the divisor $N$. By Brian\c con--Skoda's theorem (see \cite[Theorem 9.6.26 and Example 9.6.5]{LazII}) $\overline{\mathfrak{b}(|pD|)}^n \subseteq \mathfrak{b}(|pD|)$, hence $\mathcal{J}(X,\|pD\|)^n(-nN) \subseteq \mathfrak{b}(|pD|)$. To conclude just set $M:=nN$ and notice that, by Demailly, Ein, Lazarsfeld's subadditivity theorem (see \cite[Corollary 11.2.4]{LazII}), $\mathcal{J}(X, \|npD\|) \subseteq \mathcal{J}(X,\|pD\|)^n$.
\end{proof}

\begin{lemma} \label{somma}
Let $X$ be a normal projective variety and let $D_1$,$D_2$ be two divisors on $X$ verifying \textnormal{b-($\star$)}. Suppose that, given  $m \in \mathbb{N}$, for every $k$, $|km(D_1+D_2)|=|kmD_1|+|kmD_2|$. Then $D_1+D_2$ verifies \textnormal{b-($\star$)} as well.
\end{lemma}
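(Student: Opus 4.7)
The plan is first to reduce to the case $m=1$ via Corollary \ref{cor:stellahom}: since $D_i$ verifies b-$(\star)$ if and only if $mD_i$ does, I may replace each $D_i$ by $mD_i$, after which the hypothesis becomes $|k(D_1+D_2)|=|kD_1|+|kD_2|$ for every $k\geq 1$, and b-$(\star)$ for $mD_1+mD_2$ returns to b-$(\star)$ for $D_1+D_2$ via the same corollary. Next, using Corollary \ref{prepinco} (b-$(\star)$ is birationally invariant), I choose a common smooth projective birational model $\mu\colon Y\to X$ dominating the models witnessing b-$(\star)$ for $D_1$ and $D_2$ separately; thus there exist effective divisors $N_1,N_2$ on $Y$ such that
$$
\mathcal{J}(Y,\|p\mu^*D_i\|)(-N_i)\subseteq \mathfrak{b}(|p\mu^*D_i|),\qquad i=1,2,
$$
for every $p\in\mathbb{N}(D_i)$.

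Second, I transport the hypothesis up to $Y$. Since $\mu$ is birational between normal varieties one has $\mu_*\mathcal{O}_Y=\mathcal{O}_X$ and hence $H^0(Y,k\mu^*D)\cong H^0(X,kD)$, so every effective divisor in $|k\mu^*D|$ is the pullback of one in $|kD|$. Therefore $|k\mu^*(D_1+D_2)|=|k\mu^*D_1|+|k\mu^*D_2|$ holds on $Y$, which says that every section of $k\mu^*(D_1+D_2)$ is, up to a scalar, a product of sections of $k\mu^*D_1$ and $k\mu^*D_2$. At the level of base ideals this produces the key factorization
$$
\mathfrak{b}(|k\mu^*(D_1+D_2)|)=\mathfrak{b}(|k\mu^*D_1|)\cdot \mathfrak{b}(|k\mu^*D_2|).
$$
Note also that the hypothesis forces $\mathbb{N}(D_1+D_2)\subseteq \mathbb{N}(D_1)\cap \mathbb{N}(D_2)$, so the b-$(\star)$ statements for $\mu^*D_1,\mu^*D_2$ are applicable to every $p\in\mathbb{N}(D_1+D_2)$.

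Finally, I would invoke Demailly--Ein--Lazarsfeld subadditivity for multiplier ideals of a product of ideals. Taking $k$ sufficiently large and divisible so that the asymptotic multiplier ideals stabilize, the factorization above yields
$$
\mathcal{J}(Y,\|p\mu^*(D_1+D_2)\|)=\mathcal{J}\!\left(Y,\bigl(\mathfrak{b}(|kp\mu^*D_1|)\cdot \mathfrak{b}(|kp\mu^*D_2|)\bigr)^{1/k}\right)\subseteq \mathcal{J}(Y,\|p\mu^*D_1\|)\cdot \mathcal{J}(Y,\|p\mu^*D_2\|).
$$
Twisting by $\mathcal{O}_Y(-N_1-N_2)$ and combining with the b-$(\star)$ properties of $\mu^*D_1,\mu^*D_2$ gives, setting $N:=N_1+N_2$,
$$
\mathcal{J}(Y,\|p\mu^*(D_1+D_2)\|)(-N)\subseteq \mathfrak{b}(|p\mu^*D_1|)\cdot \mathfrak{b}(|p\mu^*D_2|)=\mathfrak{b}(|p\mu^*(D_1+D_2)|),
$$
which establishes b-$(\star)$ for $mD_1+mD_2$, hence for $D_1+D_2$. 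The main subtlety is that subadditivity of asymptotic multiplier ideals for \emph{sums} of divisors fails unconditionally; the role of the linear series hypothesis is precisely to convert this into subadditivity for a \emph{product} of ideals, which is the classical Demailly--Ein--Lazarsfeld result.
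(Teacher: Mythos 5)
Your proof is correct and follows essentially the same route as the paper: pass to a common smooth model dominating the two models witnessing b-$(\star)$, apply Demailly--Ein--Lazarsfeld subadditivity, use the containment $\mathfrak{b}(|A|)\cdot\mathfrak{b}(|B|)\subseteq\mathfrak{b}(|A+B|)$ for base ideals, and conclude via Corollary~\ref{cor:stellahom}. Your write-up is slightly more explicit than the paper's in two respects: you perform the reduction to $m=1$ up front rather than invoking homogeneity at the end, and you derive the needed inclusion $\mathcal{J}(\|A+B\|)\subseteq\mathcal{J}(\|A\|)\cdot\mathcal{J}(\|B\|)$ by first translating the linear-series hypothesis into the factorization $\mathfrak{b}(|k\mu^*(D_1+D_2)|)=\mathfrak{b}(|k\mu^*D_1|)\cdot\mathfrak{b}(|k\mu^*D_2|)$ and then applying the ideal-theoretic form of DEL subadditivity, whereas the paper cites \cite[Example 9.5.24]{LazII} directly and leaves the role of the hypothesis implicit in the phrase ``for every sufficiently large $p=km$''.
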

\begin{proof}
By definition, 
for $i=1,2$ there exist a smooth projective variety $Y_i$, a birational map $\mu_i: Y_i \rightarrow X$ and an effective divisor $N_i$ on $Y_i$ such that $\mathcal{J}(Y_i, \|\mu_i^*(pD_i)\|)(-N_i) \subseteq \mathfrak{b}(|\mu_i^*(pD_i)|)$ for every $p \in \mathbb{N}(D_i)$. Take a smooth projective variety $Y$ that birationally dominates $Y_i$, i.e., for $i=1,2$ we have a birational map $\sigma_i: Y \rightarrow Y_i$.
By Lemma \ref{quasibig}, for $i=1,2$,  for every $p \in \mathbb{N}(D_1) \cap \mathbb{N}(D_2)$, $$\mathcal{J}(Y, \|\sigma_i^*(\mu_i^*(pD_i))\|)(-\sigma_i^*(N_i)) \subseteq \mathfrak{b}(|\sigma_i(\mu_i^*(pD_i))|).$$ Moreover by Demailly, Ein, Lazarsfeld's subadditivity theorem (see \cite[Example 9.5.24]{LazII}), and for every sufficiently large $p=km$, we have that $$\mathcal{J}(Y, \|\sigma_1^*(\mu_1^*(pD_1)) + \sigma_2^*(\mu_2^*(pD_2))\|) \subseteq \mathcal{J}(Y, \|\sigma_1^*(\mu_1^*(pD_1))\|) \cdot \mathcal{J}(Y, \|\sigma_2^*(\mu_2^*(pD_2))\|).$$ Since we also always have the opposite inequality for  base ideals, that is $\mathfrak{b}(|\sigma_1^*(\mu_1^*(pD_1))|) \cdot \mathfrak{b}(|\sigma_2^*(\mu_2^*(pD_2))|) \subseteq \mathfrak{b}(|\sigma_1^*(\mu_1^*(pD_1)) + \sigma_2^*(\mu_2^*(pD_2))|)$, then we can easily conclude by Corollary \ref{cor:stellahom}. 
\end{proof}

\begin{corollary} \label{aggiungereeccezionali}
Let $X$ be a normal projective variety and let $D,F$ be two Cartier divisors on $X$, with $F$ effective and $\kappa(X,D) \geq 0$. Suppose that, given $m \in \mathbb{N}$, for every $k$ we have that $|k(mD)|=|k(mD-mF)|+kmF$. If $D-F$ verifies \textnormal{b-($\star$)} then $D$ verifies \textnormal{b-($\star$)} as well.
\end{corollary}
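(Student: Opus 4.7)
The strategy is to show that $mD$ satisfies property b-$(\star)$ on the same birational model on which $D-F$ does, and then to invoke Corollary \ref{cor:stellahom} to conclude. Let $\mu : Y \to X$ be the smooth birational model and $N_1$ an effective divisor on $Y$ such that
$$\mathcal{J}(Y, \|\mu^*(q(D-F))\|)(-N_1) \subseteq \mathfrak{b}(|\mu^*(q(D-F))|)$$
for every $q \in \mathbb{N}(D-F)$, as provided by the hypothesis that $D-F$ satisfies b-$(\star)$.

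The hypothesis $|k(mD)| = |k(mD-mF)| + kmF$ asserts that multiplication by the distinguished section cutting out $kmF$ identifies $H^0(X, km(D-F))$ with $H^0(X, kmD)$ for every $k \geq 1$, and since $\mu$ is birational the same identification persists on $Y$. Comparing generators of the corresponding base ideals yields
$$\mathfrak{b}(|\mu^*(kmD)|) = \mathfrak{b}(|\mu^*(km(D-F))|) \cdot \mathcal{O}_Y(-\mu^*(kmF))$$
for every $k \geq 1$.

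The key technical step is the analogous identity for asymptotic multiplier ideals:
$$\mathcal{J}(Y, \|\mu^*(kmD)\|) = \mathcal{J}(Y, \|\mu^*(km(D-F))\|) \cdot \mathcal{O}_Y(-\mu^*(kmF)).$$
To prove this, recall that $\mathcal{J}(Y, \|\mu^*(kmD)\|)$ is the maximal element of the family $\{\mathcal{J}(Y, \tfrac{1}{p}|\mu^*(pkmD)|)\}_{p \geq 1}$. Take a common log resolution $\nu : Y' \to Y$ of the relevant base ideals and write $\mathfrak{b}(|\mu^*(pkm(D-F))|) \cdot \mathcal{O}_{Y'} = \mathcal{O}_{Y'}(-E_p)$; the base-ideal identity then forces $\mathfrak{b}(|\mu^*(pkmD)|) \cdot \mathcal{O}_{Y'} = \mathcal{O}_{Y'}(-E_p - \nu^*\mu^*(pkmF))$. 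Since $\nu^*\mu^*(pkmF)$ is integral, the floor splits as $\lfloor \tfrac{1}{p}(E_p + \nu^*\mu^*(pkmF)) \rfloor = \lfloor \tfrac{E_p}{p} \rfloor + \nu^*\mu^*(kmF)$. Pushing down via $\nu$ and applying the projection formula yields the required equality for every fixed $p$; since tensoring by the fixed invertible sheaf $\mathcal{O}_Y(-\mu^*(kmF))$ preserves inclusions, passing to the supremum over $p$ gives the stated identity.

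Combining the two identities with the inclusion for $D-F$ applied to $q = km$, we obtain
$$\mathcal{J}(Y, \|\mu^*(kmD)\|)(-N_1) \subseteq \mathfrak{b}(|\mu^*(kmD)|)$$
for every $k \geq 1$ with $km \in \mathbb{N}(D-F)$; the hypothesis ensures that this set coincides with $\mathbb{N}(mD)$. Hence $mD$ satisfies b-$(\star)$, and Corollary \ref{cor:stellahom} yields the same conclusion for $D$. The principal difficulty is the identity for asymptotic multiplier ideals, which depends crucially on the fact that $\mu^*(kmF)$ is an \emph{integral} Cartier divisor; without this integrality, the floor function in the definition of the multiplier ideal would not separate cleanly into a ``$\mathbb{Q}$-part'' and a fixed integer part, and the argument would break down.
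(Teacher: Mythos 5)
Your proof is correct, and it takes a genuinely different (and more self-contained) route than what the paper implicitly intends. The paper labels this a Corollary of Lemma \ref{somma}, which suggests taking $D_1=D-F$ and $D_2=F$ there; but Lemma \ref{somma} requires \emph{both} summands to verify \textnormal{b-($\star$)}, so one must first argue that the effective divisor $F$ does. This does follow from the stated hypothesis---one can show that for every $k$ with $|km(D-F)|\neq\emptyset$ one must have $h^0(kmF)=1$ (multiplication by the rational function $s/s_{kmF}$ gives an injective $\C$-linear endomorphism of $H^0(km(D-F))$, and an eigenvector argument forces $s/s_{kmF}$ to be constant), whence $\kappa(X,mF)=0$ and $F$ is finitely generated, so \textnormal{b-($\star$)} holds for $F$ by Proposition \ref{contideali}---but the paper leaves this entirely implicit.

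Your argument sidesteps that issue completely: you establish directly that twisting by the fixed integral divisor $\mu^*(kmF)$ carries both base ideals and asymptotic multiplier ideals of $\mu^*(km(D-F))$ to those of $\mu^*(kmD)$, so that the single inclusion $\mathcal{J}(Y,\|\mu^*(km(D-F))\|)(-N_1)\subseteq\mathfrak{b}(|\mu^*(km(D-F))|)$ tensors through to the one needed for $mD$ on the \emph{same} model $Y$. The key technical point---that the floor $\lfloor\tfrac{1}{p}(E_p+\nu^*\mu^*(pkmF))\rfloor$ separates cleanly because $\nu^*\mu^*(kmF)$ is integral Cartier---is exactly right, and your identification of $\{k:km\in\mathbb{N}(D-F)\}$ with $\{k:km\in\mathbb{N}(D)\}$ via the hypothesis is also correct. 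The final appeal to Corollary \ref{cor:stellahom} (together with birational invariance of \textnormal{b-($\star$)} via Corollary \ref{prepinco}, since $X$ is only assumed normal) closes the argument. In short: your proof is direct and complete, while the paper's implied reduction to Lemma \ref{somma} has an unstated intermediate step.
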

\subsection{Examples of divisors verifying property \textnormal{b-($\star$)}}  We will analyze preimages of big divisors and finitely generated divisors. Then we will deal with surfaces and threefolds.

\begin{proposition} \label{goodmoriwaki}
Let $X', X$ be normal projective varieties, $D$ a Cartier divisor on $X'$, $f: X' \rightarrow X$ a surjective morphism and let $B$ be a $\mathbb{Q}$-Cartier big divisor on $X$. Suppose we have that $D \sim_{\mathbb{Q}} f^*(B)$. Then $D$ verifies \textnormal{b-($\star$)}.
\end{proposition}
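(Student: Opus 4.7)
The plan is to reduce the statement, via Stein factorization, to a combination of two ingredients already at our disposal: property ($\star$) for big Cartier divisors on smooth projective varieties \cite[Theorem 11.2.21]{LazII}, and the behaviour of b-($\star$) under surjective morphisms with connected fibers (Corollary \ref{prepinco}).

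First I would use Corollary \ref{cor:stellahom} to pass to multiples: choose $m>0$ so that $mB$ is Cartier and $mD \sim f^*(mB)$, and replace $(D,B)$ by $(mD,mB)$. Since linearly equivalent Cartier divisors yield identical complete linear systems in every degree, and hence identical base ideals and asymptotic multiplier ideals, property b-($\star$) depends only on the linear equivalence class. It is therefore enough to prove that $f^*(B)$ verifies b-($\star$) when $B$ is a big Cartier divisor on the normal projective variety $X$.

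Next, take a Stein factorization $f = f' \circ s$, with $s \colon X' \to Z$ a surjective morphism with connected fibers and $f' \colon Z \to X$ a finite morphism, $Z$ being a normal projective variety. Set $B_Z := (f')^*(B)$: this is a big Cartier divisor on $Z$, bigness being preserved under finite surjective pull-back via $\vol(B_Z) = \deg(f') \cdot \vol(B)$. To verify that $B_Z$ satisfies b-($\star$) on $Z$, I would choose any resolution $\pi \colon \tilde Z \to Z$: then $\pi^*(B_Z)$ is a big Cartier divisor on the smooth projective variety $\tilde Z$, so by \cite[Theorem 11.2.21]{LazII} it satisfies property ($\star$), which means exactly that $B_Z$ satisfies b-($\star$) on $Z$.

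Finally, since $s$ is surjective with connected fibers, Corollary \ref{prepinco} applies and transports b-($\star$) from $B_Z$ on $Z$ to $s^*(B_Z) = f^*(B)$ on $X'$, completing the argument. The decisive conceptual step is the use of Stein factorization, which cleanly separates the "non-connected fibers" component of $f$ (absorbed into the finite morphism $f'$ and dealt with through the bigness of $B$) from the "connected fibers" component (handled by Corollary \ref{prepinco}); the only minor technical point I expect is to check that $B_Z$ stays Cartier and big after pull-back by the finite morphism $f'$ to the possibly singular variety $Z$, but both facts are standard.
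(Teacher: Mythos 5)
Your proof is correct and follows essentially the same route as the paper's: Stein factorization to separate the finite part (where bigness is preserved by pull-back) from the connected-fibers part (handled by Corollary \ref{prepinco}), passage to a multiple via Corollary \ref{cor:stellahom}, and the case of a big divisor via \cite[Theorem 11.2.21]{LazII}. You spell out the resolution step for possibly singular $Z$ a bit more explicitly than the paper does, but the argument is the same.
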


\begin{proof}
Taking Stein factorization, since the preimage of a big divisor under a finite morphism is still big, we can assume that $f$ has connected fibers. There exists $p_0 \geq 0$ such that $p_0B$ is integral and $p_0D \sim_{\mathbb{Z}} f^*(p_0B)$. Since $p_0B$ is big then in particular verifies b-($\star$), hence, by Corollary \ref{prepinco}, $p_0D$ verifies b-($\star$). Hence also $D$ verifies b-($\star$) by Corollary \ref{cor:stellahom}.
\end{proof}

Recall that a divisor $D$ is said to be finitely generated if its $\C$-graded algebra of global sections $\oplus_{m\geq 0} H^0(X,mD)$ is finitely generated. A characterization which is relevant for our purposes is the following. 
\begin{lemma}\label{lem:fgcarat}
Let $X$ be a normal projective variety. A Cartier divisor $D$ on $X$ is finitely generated if, and only if, there exists an integer $p_0 \in \mathbb{N}(D)$ such that $\mathfrak{b}(|mp_0D|)=\mathfrak{b}(|p_0D|)^m$ for every $m \in \mathbb{N}$. 

\end{lemma}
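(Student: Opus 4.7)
The plan is to recast the base-ideal equality as the surjectivity of natural multiplication maps on sections, and then combine standard facts on Veronese subalgebras with standard facts on section rings of base point free/ample divisors.

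For the ``only if'' direction, suppose $R(X,D) = \bigoplus_{m\geq 0} H^0(X,mD)$ is finitely generated. Then by a classical Veronese argument, there exists $p_0 \in \mathbb{N}(D)$ such that the Veronese subalgebra $R(X, p_0 D)$ is generated in degree one, i.e.\ the multiplication maps
\[
\operatorname{Sym}^m H^0(X, p_0 D) \twoheadrightarrow H^0(X, m p_0 D)
\]
are surjective for every $m \geq 1$. Since $\mathfrak{b}(|p_0 D|)^m$ is precisely the base ideal cut out by the image of this multiplication map, surjectivity immediately yields $\mathfrak{b}(|m p_0 D|) = \mathfrak{b}(|p_0 D|)^m$ for every $m$.

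For the converse, let $p_0 \in \mathbb{N}(D)$ satisfy the base-ideal identity, and let $\pi\colon Y \to X$ be a log resolution of $\mathfrak{b}(|p_0 D|)$ with $Y$ smooth projective; write $\mathfrak{b}(|p_0 D|)\cdot \mathcal{O}_Y = \mathcal{O}_Y(-E)$ with $E$ effective, and set $M := \pi^*(p_0 D) - E$, so that $M$ is base point free. The hypothesis gives $\mathfrak{b}(|m p_0 D|)\cdot \mathcal{O}_Y = \mathcal{O}_Y(-mE)$, so the pulled-back linear series $\pi^*|m p_0 D|$ decomposes into the base point free moving part $|mM|$ and the fixed part $mE$. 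Multiplication by the canonical section of $\mathcal{O}_Y(mE)$, together with the birationality of $\pi$, then yields graded $\mathbb{C}$-algebra isomorphisms $R(X, p_0 D) \cong R(Y, M)$. Since $M$ is base point free on the projective variety $Y$, the associated morphism admits a Stein factorization $Y \xrightarrow{f} W \xrightarrow{g} \phi(Y) \subset \mathbb{P}^N$, yielding an ample line bundle $A := g^*(\mathcal{O}_{\mathbb{P}^N}(1)|_{\phi(Y)})$ on the normal projective variety $W$ with $M = f^*A$ and $f_*\mathcal{O}_Y = \mathcal{O}_W$; hence $R(Y, M) \cong R(W, A)$, which is finitely generated because $A$ is ample. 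A standard Veronese argument then promotes the finite generation of $R(X, p_0 D)$ to that of $R(X, D)$.

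The main obstacle is the identification $R(X, p_0 D) \cong R(Y, M)$: one has to observe that the base-ideal hypothesis exactly forces the moving/fixed decomposition of $\pi^*|m p_0 D|$ to be $|mM| + mE$ simultaneously for all $m \geq 1$, and that the resulting identification of sections on $X$ with sections of $mM$ on $Y$ is compatible with the multiplicative structure. The remaining steps are formal consequences of standard results on semi-ample divisors and Veronese subalgebras.
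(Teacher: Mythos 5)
Your argument is correct and follows essentially the same route as the paper's: the forward direction (which the paper dismisses as ``obvious'') is exactly the Veronese observation that finite generation forces degree-one generation of some truncation, and the identity $\mathfrak{b}(W_m)=\mathfrak{b}(|p_0D|)^m$ for $W_m$ the image of the multiplication map; the reverse direction resolves $\mathfrak{b}(|p_0D|)$, uses the base-ideal hypothesis to show the fixed part of $|\pi^*(mp_0D)|$ is $mE$ for every $m$, identifies $R(X,p_0D)$ with $R(Y,M)$, and concludes by finite generation of section rings of base point free divisors. The only cosmetic difference is that you spell out the semi-ample step via Stein factorization and an ample $A$ on the image, which the paper leaves implicit; both proofs also rely, without fuss, on the standard fact that finite generation of a Veronese truncation of a divisorial section algebra implies finite generation of the whole ring.
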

\begin{proof}We may assume $\kappa(X,D) \geq 0$. One implication is obvious. 
Let us prove the other. If $\mathfrak{b}(|mp_0D|)=\mathfrak{b}(|p_0D|)^m$ then let $f:Y \rightarrow X$ be a resolution of the base ideal of $|p_0D|$, with $Y$ normal, so that $\mathfrak{b}(|f^*(p_0D)|)=\mathcal{O}_Y(-F_0)$ and $|f^*(p_0D)|=|M_0|+F_0$, where $|M_0|$ is base point free. Then $\mathfrak{b}(|f^*(mp_0D)|)=\mathfrak{b}(|mp_0D|)\cdot \mathcal{O}_Y=\mathfrak{b}(|p_0D|)^m \cdot \mathcal{O}_Y= (\mathfrak{b}(|p_0D|) \cdot \mathcal{O}_Y)^m=\mathfrak{b}(|f^*(p_0D)|)^m=\mathcal{O}_Y(-mF_0)$. Hence $|f^*(mp_0D)|=|M_m|+mF_0$ and thus clearly we have that $M_m \sim_{\mathbb{Z}} mM_0$. Since $H^0(X,mp_0D)=H^0(Y,f^*(mp_0D))=H^0(Y,mM_0)$, we see that $p_0D$ is finitely generated because $M_0$ is. 
Therefore also $D$ is finitely generated.
\end{proof}
\begin{lemma} \label{finitamentegenerato}
Let $X$ be a normal projective variety and let $D$ be a finitely generated Cartier divisor on $X$ such that $\kappa(X,D) \geq 0$. Then there exist  $p_0 \in \mathbb{N}(D)$ and a log resolution $\mu_0:Y \rightarrow X$ of $|p_0D|$ such that:
\begin{enumerate}[(a)]
\item $\mathbb{N}(D) \cap \{m : m \geq p_0\} = e\mathbb{N} \cap \{m : m \geq p_0\}$, where $e=e(D)$; \label{zero}
\item $\mu_0^*(|p_0D|) = |M_0|+F_0$, where $M_0$ is an effective divisor such that $|M_0|$ is base point free and $F_0$ is an effective divisor; \label{one}
\item $\mu_0$ is a log resolution of $|mp_0D|$ for every $m \in \mathbb{N}$; \label{two}
\item for every $m \in \mathbb{N}$, $\mu_0^*(|mp_0D|)=|M_m|+mF_0$, where $M_m \sim mM_0$ is an effective divisor and $|M_m|$ is base point free.\label{three}
\end{enumerate}
\end{lemma}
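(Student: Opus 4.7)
The plan is to assemble $p_0$ from two separate ingredients and then read off the log resolution and its properties directly from the finite generation criterion.

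First I would invoke Lemma \ref{lem:fgcarat} to pick $p_1 \in \mathbb{N}(D)$ with $\mathfrak{b}(|mp_1 D|) = \mathfrak{b}(|p_1 D|)^m$ for every $m \geq 1$. Separately, the definition of the exponent $e=e(D)$ supplies some $p_2$ such that $\mathbb{N}(D) \cap \{m \geq p_2\} = e\mathbb{N} \cap \{m \geq p_2\}$. I would then take $p_0 \in \mathbb{N}(D)$ to be any multiple of $p_1$ which is at least $p_2$. Property (a) is immediate from the choice $p_0 \geq p_2$. Writing $p_0 = kp_1$, a trivial re-indexing gives $\mathfrak{b}(|mp_0 D|) = \mathfrak{b}(|p_1 D|)^{km} = \mathfrak{b}(|p_0 D|)^m$ for all $m\geq 1$, so the multiplicativity of base ideals is inherited at the level of $p_0$.

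Next I would take $\mu_0 : Y \to X$ to be a log resolution of the coherent ideal $\mathfrak{b}(|p_0 D|)$, so that $\mathfrak{b}(|p_0 D|) \cdot \mathcal{O}_Y = \mathcal{O}_Y(-F_0)$ with $F_0 + \mathrm{Exc}(\mu_0)$ an snc divisor. This produces the decomposition $\mu_0^*|p_0 D| = |M_0| + F_0$ with $|M_0|$ base point free, which is (b). Pulling back the equality $\mathfrak{b}(|mp_0 D|) = \mathfrak{b}(|p_0 D|)^m$ along $\mu_0$ yields
$$\mathfrak{b}(|mp_0 D|)\cdot \mathcal{O}_Y = (\mathfrak{b}(|p_0 D|)\cdot \mathcal{O}_Y)^m = \mathcal{O}_Y(-mF_0),$$
so $\mu_0^*|mp_0 D| = |M_m| + mF_0$ with $|M_m|$ base point free. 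The snc condition is preserved for $mF_0 + \mathrm{Exc}(\mu_0)$ because $mF_0$ has exactly the same support as $F_0$; this delivers (c) and the base-point-freeness part of (d). The linear equivalence $M_m \sim mM_0$ then falls out from comparing $M_m + mF_0 \sim \mu_0^*(mp_0 D) \sim m(M_0 + F_0)$.

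I do not anticipate a genuine obstacle, since the core algebraic content is concentrated in Lemma \ref{lem:fgcarat}. The only mild care to take is to verify that the multiplicativity of base ideals survives upon replacing $p_1$ by a multiple (so that the same $p_0$ can simultaneously handle the exponent-periodicity (a) and the log resolution), and to observe that scaling $F_0$ by the integer $m$ does not disturb the snc hypothesis.
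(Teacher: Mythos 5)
Your proof is correct and follows essentially the same route as the paper: choose $p_0$ via Lemma~\ref{lem:fgcarat}, take a log resolution $\mu_0$ of $\mathfrak{b}(|p_0D|)$, pull back the identity $\mathfrak{b}(|mp_0D|)=\mathfrak{b}(|p_0D|)^m$ to get $\mathcal{O}_Y(-mF_0)$, and read off the snc condition and the linear equivalence $M_m\sim mM_0$. The only difference is that you explicitly verify (via the re-indexing $p_0=kp_1$) that the multiplicativity of base ideals survives when $p_1$ is replaced by a large multiple to secure property (a), a point the paper dispenses with as ``without loss of generality.''
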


\begin{proof} 
Since $D$ is finitely generated there exists $p_0 \in \mathbb{N}(D)$  such that $\mathfrak{b}(|mp_0D|)=\mathfrak{b}(|p_0D|)^m$ for every $m \in \mathbb{N}$. Without loss of generality we can assume that $p_0$ is sufficiently large so that (\ref{zero}) is satisfied. Let $\mu_0: Y \rightarrow X$ be a log resolution of $\mathfrak{b}(|p_0D|)$, i.e., $\mu^*(|p_0D|)=|M_0|+F_0$, where $F_0$ is effective and $M_0$ is base point free. 
Clearly $\mu_0$, for every $m$, is a log resolution also for $\mathfrak{b}(|p_0D|)^m=\mathfrak{b}(|mp_0D|)$, hence $(\mathfrak{b}(|p_0D|) \cdot \mathcal{O}_Y)^m = \mathfrak{b}(|mp_0D|)  \cdot \mathcal{O}_Y$, that is: $\mu_0^*(|mp_0D|)=|M_m|+mF_0$, where $M_m$ must necessarily be linearly equivalent to $mM_0$.
\end{proof}


\begin{proposition} \label{contideali}
Let $X$ be a normal projective variety and let $D$ be a finitely generated Cartier divisor on $X$ such that $\kappa(X,D) \geq 0$. Then $D$ verifies \textnormal{b-($\star$)}.
\end{proposition}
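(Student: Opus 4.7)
The plan is to exploit Lemma \ref{finitamentegenerato} to reduce everything to a very clean computation on a fixed birational model, and then to combine Lemmas \ref{numerofinito} and \ref{multipli} to pass from multiples of a distinguished $p_0$ to all of $\mathbb{N}(D)$.

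First, applying Lemma \ref{finitamentegenerato} I obtain an integer $p_0\in \mathbb{N}(D)$ and a log resolution $\mu_0:Y\to X$ such that $\mathbb{N}(D)\cap\{m\geq p_0\}=e\mathbb{N}\cap\{m\geq p_0\}$, $\mu_0$ is a log resolution of every $|mp_0D|$, and $\mu_0^*|mp_0 D|=|M_m|+mF_0$ with $|M_m|$ base-point-free and $M_m\sim mM_0$. Since $\mu_0$ is birational and $X$ is normal one has $H^0(Y,k\mu_0^*D)=H^0(X,kD)$, so $\mathbb{N}(\mu_0^*D)=\mathbb{N}(D)$ and $e(\mu_0^*D)=e(D)$; in particular the hypothesis of Lemma \ref{multipli} holds on $Y$ with the same $p_0$. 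I will show that the divisor $\mu_0^*D$ on $Y$ satisfies ($\star$), which is exactly what b-($\star$) requires for $D$.

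The key (and essentially only) computation is to determine $\mathcal{J}(Y,\|\mu_0^*(mp_0D)\|)$ for every $m\geq 1$. By definition it equals $\mathcal{J}(Y,\tfrac{1}{\ell}|\mu_0^*(\ell mp_0 D)|)$ for $\ell$ sufficiently large and divisible. Using (d) of Lemma \ref{finitamentegenerato} together with the base-point-freeness of $|M_{\ell m}|$, the base ideal of $|\mu_0^*(\ell m p_0 D)|$ is $\mathcal{O}_Y(-\ell m F_0)$. Since $\mu_0$ is a log resolution, $F_0$ has simple normal crossing support, so $Y$ itself is a log resolution of this ideal and
\[
\mathcal{J}\!\left(Y,\tfrac{1}{\ell}\cdot \mathcal{O}_Y(-\ell m F_0)\right)=\mathcal{O}_Y\!\left(K_{Y/Y}-\lfloor \tfrac{\ell m}{\ell} F_0\rfloor\right)=\mathcal{O}_Y(-mF_0).
\]
Thus $\mathcal{J}(Y,\|\mu_0^*(mp_0 D)\|)=\mathcal{O}_Y(-mF_0)=\mathfrak{b}(|\mu_0^*(mp_0D)|)$ for every $m\geq 1$, so the inclusion ($\star$) holds along the subsemigroup $p_0\mathbb{N}$ with $\overline{N}=0$.

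Finally I invoke Lemma \ref{multipli} (applied to $\mu_0^*D$ on $Y$ with $\overline{N}=0$) to produce an effective divisor $N_1$ on $Y$ for which $\mathcal{J}(Y,\|\mu_0^*(pD)\|)(-N_1)\subseteq \mathfrak{b}(|\mu_0^*(pD)|)$ holds for every $p\in \mathbb{N}(D)$ with $p\geq p_0$; then Lemma \ref{numerofinito}, applied to the finite set $\{p\in\mathbb{N}(D):p<p_0\}$, furnishes an effective $N_2$ covering the remaining exponents. Setting $N:=N_1+N_2$ yields the desired uniform effective divisor and completes the verification of b-($\star$). The only nontrivial step is the multiplier-ideal computation above, and its success relies precisely on (b)--(d) of Lemma \ref{finitamentegenerato}, i.e.\ on having a single log resolution that works simultaneously for all $|mp_0D|$.
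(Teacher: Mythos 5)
Your argument is correct and follows essentially the same route as the paper: applying Lemma \ref{finitamentegenerato}, computing both $\mathcal{J}(Y,\|\mu_0^*(mp_0D)\|)$ and $\mathfrak{b}(|\mu_0^*(mp_0D)|)$ to be $\mathcal{O}_Y(-mF_0)$, and extending from the subsemigroup $p_0\mathbb{N}$ to all of $\mathbb{N}(D)$. The paper concludes by invoking Corollary \ref{cor:stellahom} directly, whereas you unfold it into its two constituent lemmas (\ref{multipli} and \ref{numerofinito}); you also spell out the floor-function calculation of the multiplier ideal that the paper only states in passing, but the substance is identical.
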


\begin{proof}
Take $p_0$, $\mu_0$ and $F_0$ as in Lemma \ref{finitamentegenerato}.
By definition and for every $m$ and $k_m \gg 1$, $$\mathcal{J}(Y, \| \mu_0^*(mp_0D) \|)= \mathcal{J}\left(Y, \frac{1}{k_m}|k_m\mu_0^*(mp_0D)|\right).$$
By Lemma \ref{finitamentegenerato}, (\ref{two}) and (\ref{three}), the latter is just $\mathcal{O}_Y(-mF_0) $.

As for the base ideals, again by Lemma \ref{finitamentegenerato}, (\ref{two}) and (\ref{three}), we have that $\mathfrak{b}(|\mu_0^*(mpD)|) =\mathcal{O}_Y(-mF_0)$ for every $m$. Hence $\mathcal{J}(Y,\|\mu^*(mp_0D)\|)=\mathfrak{b}(|mp_0D|)$ for every $m \geq 1$. At this point we can conclude, as in Proposition \ref{goodmoriwaki}, by Corollary \ref{cor:stellahom}.
\end{proof}

In particular Corollary \ref{coppia} and Corollary \ref{skoda} hold for finitely generated divisors.

It is also possible, given a certain divisor verifying b-($\star$), to construct other divisors verifying b-($\star$). For example, take a normal projective variety $X$ and a divisor $D$ on $X$ that satisfies property b-($\star$). Consider any birational morphism $\mu:Y \rightarrow X$, with $Y$ smooth, and an effective $\mu$-exceptional divisor $F$ on $Y$. Then for every sufficiently large and divisible $k$ we have $H^0(Y,\mu^*(kD)+kF)=H^0(X,kD)=H^0(Y,\mu^*(kD))$, where the first equality depends on the projection formula and \cite[Lemma 1-3-2-(3)]{KMM}. Therefore $|k(\mu^*(D)+F)|=|k\mu^*(D)|+kF$ and thus we see that $\mu^*(D)+F$ verifies b-$(\star)$ by Corollary \ref{aggiungereeccezionali}.


\begin{remark}
Notice  that  multiplier ideals associated to divisors verifying b-($\star$) may not be  well-behaved under certain viewpoints. In particular while for a big divisor $B$ on a smooth variety $\cup_{p \in \mathbb{N}} \mathcal{Z}(\mathcal{J}(X,\|pB\|))$ has a nice geometric interpretation (namely it corresponds to the restricted base locus of $B$, see \cite[Corollary 2.10]{ELMNP}), the same does not hold for  divisors verifying b-($\star$). Just think to \cite[Example 1]{R}: $\mathbb{B}_-(D)=\emptyset$ since $D$ is nef, but  $\bigcup \mathcal{Z}(\mathcal{J}((X,\|pD\|)))=\supp(D)$. 
\hfill $\square$
\end{remark}

All divisors on surfaces satisfy property b-($\star$):

\begin{proposition} \label{surfaces}
Let $X$ be a normal projective surface and let $D$ be a Cartier divisor such that $\kappa(X,D) \geq 0$. Then $D$ verifies \textnormal{b-($\star$)}.
\end{proposition}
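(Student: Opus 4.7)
First, by Corollary \ref{prepinco}, property b-($\star$) is birational, so I may assume $X$ is smooth. I will split the proof into three cases according to $\kappa(X,D) \in \{0,1,2\}$, and reduce each case to a result already established in this section.

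The cases $\kappa(X,D) = 2$ and $\kappa(X,D) = 0$ will be immediate. If $\kappa(X,D) = 2$ then $D$ is big, so ($\star$) itself holds by \cite[Theorem 11.2.21]{LazII}, and a fortiori b-($\star$). If $\kappa(X,D) = 0$ then $h^0(X,pD) = 1$ for every $p \in \N(D)$, so for any generators $s_p \in H^0(X, pD)$ and $s_q \in H^0(X,qD)$ the sections $s_p^q$ and $s_q^p$ must be proportional in the one-dimensional space $H^0(X,pqD)$. This forces $R(X,D) = \C[s_e]$ with $e = e(D)$, so $D$ is finitely generated and Proposition \ref{contideali} gives b-($\star$).

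The substantive case is $\kappa(X,D) = 1$. Here I plan to invoke the Zariski decomposition $D = P + N$ on the smooth surface $X$, with $P$ a nef $\Q$-divisor and $N$ an effective $\Q$-divisor, satisfying $H^0(X, mD) = H^0(X, \lfloor mP\rfloor)$ for every $m \geq 0$. In particular $\kappa(X,P) = 1 < 2$, so $P$ is not big and $P^2 = 0$. By the classical abundance theorem for surfaces (which applies since $\kappa(P) = \nu(P) = 1$, due to Zariski), $P$ is semi-ample. Choose $m_0$ such that $m_0 N$ is integral and $m_0 P$ is Cartier and base-point-free; the associated morphism factors through its Stein factorization to give an algebraic fiber space $f \colon X \to C$ onto a smooth curve $C$ with $m_0 P = f^*(A)$ for a divisor $A$ of positive degree on $C$. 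The Zariski-decomposition identity of section rings then translates into
$$
|k m_0 D| = |k m_0 (D - N)| + k m_0 N, \qquad \text{for all } k \geq 0.
$$
Since $A$ is big on the curve $C$, Proposition \ref{goodmoriwaki} applies to $m_0(D-N) = f^*(A)$ to yield b-($\star$) for $m_0(D-N)$. Corollary \ref{aggiungereeccezionali} then propagates b-($\star$) from $m_0(D-N)$ to $m_0 D$, and Corollary \ref{cor:stellahom} delivers b-($\star$) for $D$ itself.

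The only non-routine ingredient is the semi-ampleness of $P$ when $\kappa(X,D)=1$; this is the sole place where the two-dimensional geometry of $X$ really enters, through Zariski's classical finite-generation theorem for section rings of divisors of Iitaka dimension at most one on a smooth projective surface. Granted this, the rest of the argument is a straightforward assembly of Propositions \ref{contideali}, \ref{goodmoriwaki} and Corollaries \ref{prepinco}, \ref{aggiungereeccezionali}, \ref{cor:stellahom}.
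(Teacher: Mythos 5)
Your proof is correct, but it takes a substantially longer route than the one in the paper. The paper simply invokes Zariski's theorem as stated in \cite[Theorem~14.19]{B}: on a smooth projective surface a divisor of non-negative Iitaka dimension is either big or has finitely generated section ring. In the first case property $(\star)$ holds by \cite[Theorem~11.2.21]{LazII}; in the second, Proposition~\ref{contideali} applies. That is the whole proof. Your cases $\kappa=2$ and $\kappa=0$ recover these two alternatives directly; your $\kappa=1$ case replaces the appeal to Zariski's finite-generation theorem by the Zariski decomposition $D=P+N$, the abundance theorem for nef $\mathbb{Q}$-divisors on surfaces (to get $P$ semi-ample), Proposition~\ref{goodmoriwaki} applied to the resulting fibration, and the assembly via Corollaries~\ref{aggiungereeccezionali} and~\ref{cor:stellahom}. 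This is a perfectly valid alternative, and it has the modest virtue of demonstrating how the toolkit of \S 4 can be used in practice; but it is not more elementary, since surface abundance for a nef $P$ with $\kappa(P)=\nu(P)=1$ is itself essentially Zariski's theory (one typically either deduces semi-ampleness of $P$ from finite generation of $R(X,D)$, or re-proves a comparable statement). The dichotomy ``big or finitely generated'' is the cleaner invocation.

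Two small points worth tidying. In the $\kappa=0$ case, $R(X,D)$ need not literally equal $\mathbb{C}[s_e]$: the semigroup $\mathbb{N}(D)$ may be a proper (but cofinite) sub-semigroup of $e\mathbb{N}$, in which case $R(X,D)\cong\mathbb{C}[\mathbb{N}(D)]$, which is still a finitely generated $\mathbb{C}$-algebra; only this weaker fact is needed, so the conclusion stands. Also, Corollary~\ref{prepinco} as stated propagates b-$(\star)$ \emph{from} the base \emph{to} the total space of a fibration; for the initial reduction to smooth $X$ you actually want the converse direction (from a resolution down to $X$), which is immediate from the very definition of b-$(\star)$ (a divisor on a normal $X$ satisfies b-$(\star)$ as soon as its pull-back to some smooth birational model satisfies $(\star)$), not from Corollary~\ref{prepinco}. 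Finally, when applying Corollary~\ref{aggiungereeccezionali}, since $N$ is only a $\mathbb{Q}$-divisor you should take $\tilde D=m_0D$, $\tilde F=m_0N$ (both Cartier) and $m=1$ in that corollary, then finish with Corollary~\ref{cor:stellahom}; your text suggests this but it is worth making explicit.
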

\begin{proof}
Possibly taking a desingularization of $X$ we can assume that $X$ is smooth. Then, by \cite[Theorem 14.19]{B}, either $D$ is finitely generated or $D$ is big. In both cases $D$ verifies b-($\star$).
\end{proof}

Also nef divisors on a threefold satisfy property b-($\star$). 

\begin{proposition} \label{prop:3folds}
 Let $X$ be a normal projective threefold and let $D$ be a Cartier divisor on $X$ such that $\kappa(X,D) \geq 0$. Either $D$ is finitely generated or $D$ is abundant.
 In particular, any nef Cartier divisor on $X$ such that $\kappa(X,D) \geq 0$ verifies \textnormal{b-($\star$)}.
 \end{proposition}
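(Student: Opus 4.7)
The plan is to first establish the dichotomy and then deduce the nef statement from it; by Corollary \ref{prepinco}, property b-$(\star)$ is birationally invariant, so we may replace $X$ by a smooth birational model and $D$ by its pull-back, and assume that $X$ is smooth projective.

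For the dichotomy I would split on $k:=\kappa(X,D)\in\{0,1,2,3\}$. If $k=3$, then $D$ is big, hence abundant. If $k=0$, every $H^0(X,mD)$ has dimension at most one, so $R(X,D)$ is isomorphic to a polynomial ring in one variable, making $D$ finitely generated. For $k\in\{1,2\}$, I would pass to a smooth birational model $X'$ carrying the Iitaka fibration $\phi:X'\to Z$ with $\dim Z=k$ and invoke three-dimensional birational geometry: running a $D$-MMP on $X'$, whose termination is classical in dimension three, yields a model on which the strict transform of $D$ is nef, and then Kawamata's abundance for nef divisors on $3$-folds, combined with finite generation of section rings of ample divisors on curves (when $k=1$) or with Proposition \ref{surfaces} applied on the surface $Z$ (when $k=2$), produces finite generation of $R(X,D)$.

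For the ``in particular'' part, let $D$ be nef with $\kappa(X,D)\ge 0$. The dichotomy gives either $D$ finitely generated---in which case Proposition \ref{contideali} yields b-$(\star)$ directly---or $D$ abundant. In the nef and abundant case, Kawamata's theorem on nef and abundant divisors produces a birational morphism $\mu:Y\to X$ and a surjective morphism $g:Y\to W$ such that $\mu^{*}D\sim_{\mathbb{Q}} g^{*}A$ for some big $\mathbb{Q}$-Cartier divisor $A$ on $W$; Proposition \ref{goodmoriwaki} then shows that $\mu^{*}D$ verifies b-$(\star)$, and hence so does $D$ by the very definition of b-$(\star)$.

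The main obstacle is the dichotomy in the intermediate Iitaka dimensions $k\in\{1,2\}$: the extreme cases $k=0$ and $k=3$ are formal, whereas $k=1,2$ genuinely require three-dimensional input (the $3$-fold MMP together with the abundance and finite-generation results specific to threefolds) to rule out the pathology of a non-abundant, non-finitely-generated divisor.
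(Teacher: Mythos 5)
Your treatment of the ``in particular'' statement mirrors the paper's (finitely generated case via Proposition~\ref{contideali}, abundant nef case via Kawamata's structure theorem and Proposition~\ref{goodmoriwaki}), but your proof of the dichotomy for $k\in\{1,2\}$ has genuine gaps.

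First, there is no ``$D$-MMP'' for an arbitrary Cartier divisor $D$: the minimal model program is run with respect to adjoint divisors $K_X+\Delta$, and it does not in general produce a model on which the strict transform of an arbitrary effective divisor is nef. Second, ``Kawamata's abundance for nef divisors on $3$-folds'' is not a theorem; abundance concerns $K_X+\Delta$, and arbitrary nef divisors need not be abundant. Third, your conclusion is too strong: for $k=2$ you claim finite generation of $R(X,D)$, which is false in general --- pulling back to $S\times C$ a big divisor $B$ on a surface $S$ whose section ring is not finitely generated (such $B$ exist by Zariski) yields a divisor of Iitaka dimension $2$ on a $3$-fold whose section ring is isomorphic to $R(S,B)$ and hence not finitely generated. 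The proposition only asserts abundance, not finite generation, in that case.

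The paper proves the dichotomy without any MMP input. It rests on two facts: (i) if $\kappa(X,D)\le 1$ then $R(X,D)$ is finitely generated, a classical statement that covers $k=0,1$ at once (you only treat $k=0$ cleanly); and (ii) for $\kappa(X,D)=2$, since $\nu(D)\ge\kappa(X,D)$ always and, by \cite[Proposition V.2.7]{Nakayama}, $\nu(D)=3$ on a $3$-fold would force $D$ to be big (hence $\kappa=3$, a contradiction), one gets $\nu(D)=2=\kappa(X,D)$, i.e.\ $D$ is abundant. The case $\kappa(X,D)=3$ is big, hence abundant. This numerical-dimension argument, rather than the MMP, is the essential $3$-fold ingredient.
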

\begin{proof}
Let $\nu(D)$ be the numerical dimension of $D$ (see \cite{Nakayama}). 
Recall that $D$ is said to be abundant if $\nu(D)=\kappa(X,D)$ and that, in general, $\nu(D)\geq \kappa(X,D)$. If $\kappa(X,D) \leq 1$ then $D$ is finitely generated. 
If $\nu(D)=3$ then $D$ is big by \cite[Proposition V.2.7]{Nakayama} 
, and hence abundant. If $\kappa(X,D)=2=\nu(D)$ then $D$ is abundant by definition.

Let now $D$ be a nef divisor. If $D$ is finitely generated, then it verifies \textnormal{b-($\star$)} by Proposition \ref{contideali}.
Suppose then that $D$ is abundant. 
Recall that, given a normal projective variety $X$ and a nef and abundant divisor $D$ on $X$, by Kawamata's theorem (see \cite[Proposition 2.1]{Kawamata}), we have a diagram:
\begin{displaymath}
\xymatrix{ Y  \ar[d] ^{\mu} \ar[r]^f & Z \\
X &}
\end{displaymath}
which satisfies the following conditions:
\begin{enumerate}
\item $Y,Z$ are smooth projective varieties,
\item $\mu$ is a birational morphism,
\item $f$ is surjective and it has connected fibers,
\item there exists a nef and big $\mathbb{Q}$-divisor $B$  on $Z$ such that $\mu^*(D) \sim_{\mathbb{Q}} f^*(B)$.
\end{enumerate}

Therefore, by Proposition \ref{goodmoriwaki}, nef and abundant divisors are specific examples of divisors verifying property b-($\star$) and we are done. 
\end{proof}

\section{Volumes along subvarieties}
%

\subsection{Reduced volume}

 Let $X$ be a smooth projective variety, and let $V \subseteq X$ be a subvariety of dimension $d$. Let $L$ be a line bundle on $X$. We can define (see \cite{ELMNPrestricted}, \cite{PT}) the \emph{reduced volume of} $L$ \emph{along} $V$ as
 $$ \limsup_{m \rightarrow  + \infty} \frac{\dim(H^0(\mathcal{O}_V(mL) \otimes \mathcal{J}(\|mL\|)_V))}{m^d/d!},$$
 where $\mathcal{J}(\|mL\|)$ is the asymptotic multiplier ideal associated to $mL$ (see \cite[Definition 11.1.2]{LazII}). Recall also that if $\mathcal{J}$ is an ideal sheaf on $X$, the ideal $\mathcal{J} \cdot \mathcal{O}_V$ it is usually denoted by $\mathcal{J}_V$. 

\begin{remark}
In the introduction of \cite{PT} it is said that it is worth studying $\mu(V,L)$ even if a natural ring structure on $\bigoplus H^0(V, \mathcal{O}_V(mL) \otimes \mathcal{J}(\|mL\|)_V)$ is not known. Essentially this depends on the fact that the subadditivity inequality for asymptotic multiplier ideals goes into the ``wrong direction''. Anyway, if $L$ satisfies (\ref{fondamentale}) (i.e., $\mathcal{J}(\|mL\|) (-G) \subseteq \mathfrak{b}(\|mL\|)$) then $\bigoplus H^0(V, \mathcal{O}_V(mL) \otimes \mathcal{J}(\|mL\|)_V(-G_V))$ is actually a graded linear series by \cite[Theorem 11.1.8]{LazII}. \hfill $\square$
 \end{remark}

The aim of the next subsection is to understand the relationship between reduced volume and asymptotic intersection number, at least when the line bundle $L$ verifies property b-($\star$). Before doing that, let us recall the following two useful lemmas:

  \label{sect:volume}
\begin{lemma}[\cite{PT}, Lemma 3.5] \label{lem:ineq}
Let $X$ be a smooth projective variety, let $L$ be a line bundle on $X$ and let $V \subseteq X$ be a subvariety of $\dim V = d > 0$ such that 
$V \not\subseteq \mathbb{B}(L)$.
Then $\mu(V, L) \ge \| L^d \cdot V \|$.
\end{lemma}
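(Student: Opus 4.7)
Without loss of generality assume $\|L^d\cdot V\|>0$. Since $\|L^d\cdot V\|$ is the limsup of $M_m^d\cdot V_m/m^d$ over $m\in\mathbb{M}(L)$, it suffices to prove $\mu(V,L)\geq M_m^d\cdot V_m/m^d$ for each such $m$ (the case $M_m^d\cdot V_m=0$ being trivial). Fix such an $m$ and let $u_m:X_m\to X$ be a log resolution of $|mL|$, which we may choose to be an isomorphism over the generic point of $V$ since $V\not\subseteq\mathbb{B}(L)$. Write $u_m^*(mL)=M_m+E_m$ with $|M_m|$ base-point-free, and let $V_m$ denote the strict transform of $V$. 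Then $M_m|_{V_m}$ is nef on $V_m$, with $(M_m|_{V_m})^d=M_m^d\cdot V_m>0$, so $M_m|_{V_m}$ is nef and big; hence $h^0(V_m,kM_m|_{V_m})=\tfrac{k^d\,M_m^d\cdot V_m}{d!}+O(k^{d-1})$ as $k\to\infty$.

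The heart of the argument is the sheaf inclusion
\begin{equation}\label{eq:keyincl}
u_{m*}\mathcal{O}_{X_m}\bigl(K_{X_m/X}+kM_m\bigr)\ \subseteq\ \mathcal{O}_X(kmL)\otimes\mathcal{J}\bigl(X,\|kmL\|\bigr),\quad \forall k\geq 1.
\end{equation}
To establish it, choose $N\gg 0$ divisible and a common log resolution $u=u_m\circ\pi:X'\to X_m\to X$ of $|NkmL|$, and write $u^*|NkmL|=|M'|+F'$ with $|M'|$ base-point-free. Since $|NkmL|$ contains the sub-linear-series $Nk\cdot u^*|mL|$, whose fixed part is exactly $Nk\pi^*E_m$ (the moving part $|\pi^*M_m|$ being base-point-free), we obtain $F'\leq Nk\pi^*E_m$, hence $\lfloor F'/N\rfloor\leq k\pi^*E_m$. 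The identity $\pi_*\mathcal{O}_{X'}(K_{X'/X_m})=\mathcal{O}_{X_m}$ (valid for any $\pi$-exceptional effective divisor) together with the projection formula then give $u_{m*}\mathcal{O}_{X_m}(K_{X_m/X}-kE_m)\subseteq\mathcal{J}(X,\|kmL\|)$; tensoring with $\mathcal{O}_X(kmL)$ and using $kM_m=u_m^*(kmL)-kE_m$ yields~(\ref{eq:keyincl}).

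By Koll\'ar's relative vanishing, $R^iu_{m*}\mathcal{O}_{X_m}(K_{X_m/X}+kM_m)=0$ for $i>0$ (since $M_m$ is $u_m$-nef, being base-point-free, and $u_m$-big, since $u_m$ is birational), so the global sections of the left-hand side of~(\ref{eq:keyincl}) equal $H^0(X_m,K_{X_m/X}+kM_m)$. Restricting~(\ref{eq:keyincl}) to $V$ produces a natural linear map from this space into $H^0(V,\mathcal{O}_V(kmL)\otimes\mathcal{J}(X,\|kmL\|)_V)$, factoring through the restriction map onto $V_m$. The plan is to show that the image has dimension $\tfrac{k^d\,M_m^d\cdot V_m}{d!}-O(k^{d-1})$; dividing by $(km)^d/d!$ and taking $\limsup_k$, then $\limsup_{m\in\mathbb{M}(L)}$, gives the conclusion $\mu(V,L)\geq\|L^d\cdot V\|$.

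The main technical obstacle is the dimension estimate on the image. The base change of $u_m$ along $V\hookrightarrow X$ is not Cartesian, since $u_m^{-1}(V)$ may carry exceptional components in addition to $V_m$, so comparing $(u_{m*}\mathcal{F})|_V$ with $v_{m*}(\mathcal{F}|_{V_m})$ (where $v_m:=u_m|_{V_m}$) requires care. A natural route is to twist (\ref{eq:keyincl}) by a sufficiently positive auxiliary divisor on $X$, use Serre--Castelnuovo-type vanishing to force surjectivity of the restriction on global sections modulo an $O(k^{d-1})$ defect, and then remove the twist at the end; the resulting correction is absorbed in the leading $k^d$ term and does not affect the conclusion.
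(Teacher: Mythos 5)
The paper itself does not prove this statement: it quotes it verbatim from \cite{PT}, Lemma 3.5, so I will judge your proposal against what a correct argument must accomplish.

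Your sheaf inclusion $u_{m*}\mathcal{O}_{X_m}(K_{X_m/X}+kM_m)\subseteq \mathcal{O}_X(kmL)\otimes\mathcal{J}(X,\|kmL\|)$ is correct and cleanly derived, and the Koll\'ar-vanishing identification of its global sections is fine. However, the remainder of the plan cannot succeed, and the obstruction is conceptual, not technical. The global sections you propose to restrict from $X_m$ down to $V$ measure, after dividing by $k^d m^d/d!$ and letting $k\to\infty$, the \emph{restricted} volume $\vol_{X_m|V_m}(M_m)$, which by Lemma~\ref{lem:semiamplefib} equals $M_m^d\cdot V_m/\delta_m$, where $\delta_m$ is the degree of $V_m$ over its image under the semiample fibration of $M_m$. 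Thus the image of your map grows like $\frac{k^d M_m^d\cdot V_m}{\delta_m d!}$, not like $\frac{k^d M_m^d\cdot V_m}{d!}$ as you assert. When $\delta_m>1$ -- which is precisely the interesting case, cf.~Remark~\ref{rmk:delta} -- the discrepancy sits in the leading term, not in an $O(k^{d-1})$ tail, and no auxiliary twist or vanishing theorem recovers it. Taking limits, your argument yields at best $\mu(V,L)\ge \|L^d\cdot V\|/\delta=\vol_{X|V}(L)$, strictly weaker than the claimed inequality.

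A concrete failure: take $X=\mathbb{P}^1\times\mathbb{P}^1$, $L=\mathrm{pr}_1^*\mathcal{O}_{\mathbb{P}^1}(1)$, and $V\subset X$ a smooth curve of bidegree $(1,2)$. Here $u_m=\mathrm{id}$, $M_m=mL$, $E_m=0$, $K_{X_m/X}=0$, $\mathcal{J}(X,\|kmL\|)=\mathcal{O}_X$, and $\mu(V,L)=\|L\cdot V\|=L\cdot V=2$. But the restriction map $H^0(X,kmL)\to H^0(V,kmL|_V)$ has image of dimension exactly $km+1$, while $k\,M_m\cdot V=2km$; the image never gets close to $2km$, and you lose exactly the factor $\delta=2$.

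The whole content of the lemma (cf.~Corollary~\ref{cor:caratt} and the main theorem of \cite{PT}) is that $\mu(V,L)$ genuinely exceeds $\vol_{X|V}(L)$ by the factor $\delta$, because the reduced volume counts \emph{all} sections of $\mathcal{O}_V(kmL)\otimes\mathcal{J}(X,\|kmL\|)_V$ on $V$, not merely those restricted from the ambient variety. A correct proof therefore has to bound $h^0(V,\mathcal{O}_V(kmL)\otimes\mathcal{J}(X,\|kmL\|)_V)$ from below \emph{intrinsically on $V$} (or on $V_m$): the relevant input is the inclusion of sheaves on $V_m$,
$\mathcal{O}_{V_m}(-kE_m|_{V_m})\subseteq\mathcal{J}(X,\|kmL\|)_V\cdot\mathcal{O}_{V_m}$
(which follows by pulling back and restricting the inclusion $\mathfrak{b}(|mL|)^k\subseteq\mathcal{J}(X,\|kmL\|)$), combined with asymptotic Riemann--Roch for the nef class $M_m|_{V_m}$ on the $d$-fold $V_m$, giving $h^0(V_m,kM_m|_{V_m})\sim k^d M_m^d\cdot V_m/d!$, and a push-forward comparison back to $V$. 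Restricting sections from upstairs on $X_m$ is precisely what must be avoided.
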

\begin{lemma}[\cite{ELMNPrestricted}, Lemma 3.3] \label{lem:pk}
Let $X$ be a smooth projective variety. Let $V \subseteq X$ be a subvariety of $\dim V = k > 0$ and let $L$ be a line bundle on $X$. Then for every $h$ we have 
$$
\limsup_{m \to \infty}
		\frac{h^0(V, \cO_V(mL) \otimes \mathcal{J}(\|mL\|)_V)}{m^d/d!}=\limsup_{p \to \infty}
		\frac{h^0(V, \cO_V(phL) \otimes \mathcal{J}(\|phL\|)_V)}{h^kp^d/d!},
		$$
\end{lemma}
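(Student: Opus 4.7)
My plan is as follows. Write $f(m):=h^0(V,\cO_V(mL)\otimes \cJ(\|mL\|)_V)$ and interpret the statement with $d=k$, so that $h^kp^d/d!=(ph)^k/k!$. Then the right-hand side is nothing but the $\limsup$ of the sequence $f(m)/(m^k/k!)$ (the very sequence whose $\limsup$ is the LHS), restricted to the subsequence $\{m=ph\}_{p\in\N}$. Hence $\mathrm{RHS}\le \mathrm{LHS}$ is immediate, and the substance lies in the reverse inequality.

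To prove $\mathrm{LHS}\le \mathrm{RHS}$, I would relate $f(m)$ to $f((p+1)h)$, for $m=ph+r$ with $0\le r<h$, via multiplication by a fixed section. The plan is to choose $s_r\in H^0(X,(h-r)L)$ not vanishing identically on $V$ (see below for the existence question) and invoke the inclusion
\[
\mathfrak{b}(|(h-r)L|)\cdot \cJ(X,\|mL\|)\subseteq \cJ(X,\|(p+1)hL\|)
\]
from \cite[Theorem 11.1.8(i)]{LazII}. Tensoring with $\cO_V$ gives $\mathfrak{b}(|(h-r)L|)_V\cdot \cJ(\|mL\|)_V\subseteq \cJ(\|(p+1)hL\|)_V$, and since $s_r|_V$ is a nonzero section of $\mathfrak{b}(|(h-r)L|)_V\otimes \cO_V((h-r)L)$, multiplication by $s_r|_V$ produces an injection
\[
H^0(V,\cO_V(mL)\otimes \cJ(\|mL\|)_V)\hookrightarrow H^0(V,\cO_V((p+1)hL)\otimes \cJ(\|(p+1)hL\|)_V),
\]
whence $f(m)\le f((p+1)h)$. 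Combined with the elementary estimate $((p+1)h)^k/m^k\to 1$ as $m=ph+r\to \infty$ with $r$ bounded, this will yield
\[
\frac{f(m)}{m^k/k!}\le \frac{f((p+1)h)}{((p+1)h)^k/k!}\cdot \frac{((p+1)h)^k}{m^k},
\]
and taking $\limsup$ over any sequence realizing the LHS bounds it by $\mathrm{RHS}$.

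The hard part will be guaranteeing the existence of the auxiliary sections $s_r$. My plan is to first reduce to $h$ divisible by the exponent $e(L)$ and $m\in \mathbb{N}(L)$, a reduction that leaves both $\limsup$'s unchanged by a homogeneity argument analogous to Lemma \ref{lem:homvolume}. Under the natural assumption $V\not\subseteq \mathbb{B}(L)$, the required sections then exist for all sufficiently large $h-r\in \mathbb{N}(L)$ (by pulling back sections of a suitable multiple of $L$ via a power map), and the degenerate cases (where $V\subseteq \mathbb{B}(L)$ or $\kod(X,L)<0$) are handled by a separate, routine argument.
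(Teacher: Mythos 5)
The paper does not reprove this lemma: it is stated with an attribution to \cite[Lemma~3.3]{ELMNPrestricted} and no proof is given. So there is no ``paper's own proof'' to compare against, and your proposal can only be judged on its merits and against the expected ELMNP-style argument.

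Your outline is the standard (and, I believe, the intended) approach: identify $d=k$, observe that the right-hand side is the $\limsup$ of the left-hand sequence restricted to $m\in h\N$ (so $\mathrm{RHS}\le\mathrm{LHS}$ is free), and obtain the reverse inequality by multiplying by an auxiliary section $s\in H^0(X,(h-r)L)$ with $s|_V\neq 0$, using the inclusion $\mathfrak b(|aL|)\cdot\cJ(\|bL\|)\subseteq\cJ(\|(a{+}b)L\|)$ of \cite[Theorem~11.1.8]{LazII} to push $\cJ(\|mL\|)_V$ into $\cJ(\|(p{+}1)hL\|)_V$. Note that this is genuinely necessary here: because subadditivity goes the wrong way, the spaces $H^0(V,\cO_V(mL)\otimes\cJ(\|mL\|)_V)$ do \emph{not} form a graded linear series (as the paper itself remarks), so one cannot simply invoke a homogeneity argument \`a la Lemma~\ref{lem:homvolume}; the multiplication-by-a-section device is the right substitute, and you have chosen it.

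The one place where you are waving your hands is exactly where the difficulty lies: the existence of the auxiliary sections. In ELMNP's setting $L$ is big and $V\not\subseteq\B_+(L)$, so $e(L)=1$ and sections of $(h-r)L$ nonvanishing on $V$ exist for all large $h-r$; the issue is invisible there. In the present paper's setting $L$ only has $\kod(X,L)\ge 0$, and two things can fail: (a) $V\subseteq\B(L)$, in which case no multiple of $L$ has a section nonvanishing on $V$ and the multiplication argument breaks down entirely (one must check separately that the statement is still sensible, or add the hypothesis $V\not\subseteq\B(L)$, which is in force whenever the paper actually invokes the lemma); and (b) even if $V\not\subseteq\B(L)$, the differences $h-r$ with $0\le r<h$ are \emph{bounded}, so ``take $h-r$ sufficiently large'' does not apply. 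The correct fix is the one you gesture at but do not carry out: jump from $m=ph+r$ not to $(p{+}1)h$ but to $(p{+}q_r)h$ for a fixed $q_r$ chosen (depending only on $r$, hence over a finite set) so that $q_rh-r$ lies in the semigroup $\{a:\exists\,s\in H^0(X,aL),\,s|_V\not\equiv 0\}$; since $q_rh/m\to 1$ with $r$ bounded this does not affect the limsup. Your phrase ``pulling back sections of a suitable multiple of $L$ via a power map'' does not actually describe this and is not a correct mechanism on its own; but the surrounding plan is sound, and once (a) is made an explicit hypothesis and (b) is handled as above, the proof goes through.
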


\subsection{Reduced volume for divisors that satisfy property \textnormal{b-($\star$)}}

\begin{proof} [Proof of Theorem \ref{thm:mu}]
By Lemma \ref{lem:ineq} it is sufficient to prove the inequality
$\| L^d\cdot V\|\geq \mu(V,L)$. 
Since $X$ is smooth and $L$ satisfies b-($\star$), then we can assume that there exists a birational morphism $\phi:\overline{X} \rightarrow X$, where $\overline{X}$ is smooth, such that $V \not \subseteq \phi(\mathrm{Exc}(\phi))$ and such that $\phi^*(L)$ satisfies (\ref{fondamentale}) for all $p$ sufficiently large. Then by \cite[Lemma 2.3]{PT}, $\mu(V,L)=\mu(V', \phi^*(L))$, where $V'$ is the strict transform of $V$. Since, by definition, also $\|L^d \cdot V\|=\|\phi^*(L)^d \cdot V'\|$ then we can, and we will, suppose that $L$ satisfies (\ref{fondamentale}) on $X$ and for every $p$ in $\mathbb{N}(L)$. In particular we have that there exists an effective divisor $G$ on $X$, that can be taken arbitrarily ample, such that, for every $m \in \mathbb{N}(L)$, for every $V \not \subseteq \Lambda:={\mathbb{B}(L) \cup \Supp(G)}$, 
\begin{equation} \label{eq:primariduzione}
\mathcal{J}(X,\|mL\|)_V(-G_{|V}) \subseteq {\mathfrak b}(|mL|)_V.
\end{equation}

At this point we argue as in \cite[Proof of Theorem 2.13]{ELMNPrestricted}.  Consider a common log-resolution of both $\mathfrak{b}(|mL|)$ and $\mathcal{J}(X,\|mL\|)$
$$
 \pi_m : X_m\to X.
$$
Write 
$$
 \mathfrak{b}(|mL|)\cdot \cO_{X_m}=\cO_{X_m}(-E_m)\textrm{ and } \mathcal{J}(X,\|mL\|)\cdot \cO_{X_m}=\cO_{X_m}(-F_m),
$$
and 
$$
 \pi_m^*(mL)=M_m+E_m=N_m+F_m.
$$
Notice that by construction $M_m$ is base point free, hence nef. Moreover, from the inclusion $ \mathfrak{b}(|mL|)\subseteq \mathcal{J}(X,\|mL\|)$ we deduce that
\begin{equation}\label{eq:NM}
N_m\geq M_m.
\end{equation}
 We denote by $V_m$ the strict transform of $V$ in $X_m.$ Then from   (\ref{eq:NM}) we deduce 
\begin{equation}\label{eq:6}
\vol({M_m}_{| V_m}+({\pi_m}_{| V_m})^*G_{|V})\geq \vol((N_m)_{| V_m})\geq \vol((M_m)_{| V_m}). 
\end{equation}
On the other hand we have 
\begin{align}\label{eq:sub}
h^0(V_m, k(N_m)_{| V_m}) &\geq  
h^0(V,\cO_V(kmL)\otimes\mathcal{J}(X,\|mL\|)^k_{V}) \\ \nonumber
& \geq 
h^0(V,\cO_V(kmL)\otimes\mathcal{J}(X,\|kmL\|)_{V})
\end{align}
where for the second inequality we make use of the subadditivity property of the asymptotic multiplier ideals (see \cite{DEL} or \cite[Theorem 11.2.3]{LazII}).
Multiplying by $d!/(km)^d$ and letting first $k$ and then $m$ go to infinity in (\ref{eq:sub}), we deduce that 
\begin{equation}\label{eq:7}
\limsup_{m\to \infty} \frac{\vol({N_m}_{|V_m})}{m^d}\geq \mu(V,L)
\end{equation}
(here we also use Lemma \ref{lem:pk}).
Therefore, by (\ref{eq:6}) and (\ref{eq:7}), it is sufficient to prove 
that 
\begin{align}\label{eq:suff}
 \lim_{m\to \infty} \frac{\vol(({M_m}_{|V_m}+({\pi_m}_{|V_m})^*G_{|V}))-\vol({M_m}_{|V_m})}{m^d}  & = & \\
 \limsup_{m\to \infty} \frac{\vol(({M_m}_{|V_m}+({\pi_m}_{|V_m})^*G_{|V}))-\vol({M_m}_{|V_m})}{m^d}&=0. \nonumber
\end{align}
Since the divisors appearing in the formula above are nef (recall that $G$ has been taken arbitrarily ample), then the volumes  are simply computed as intersection numbers. 
Notice that if $A$ is an ample divisors on $X$  such that $A-L$ is effective then $\pi^*(mA)-M_m\geq 0$. Therefore 
$$
 (M^{d-i}_m\cdot (\pi_m^*G)^i\cdot V_m)\leq 
 ((\pi^*(mA))^{d-i}\cdot (\pi_m^*G)^i\cdot V_m)
$$
which implies 
$$
  \lim_{m\to \infty} \frac{(M^{d-i}_m\cdot (\pi_m^*G)^i\cdot V_m)}{m^d}=0.
$$
Then (\ref{eq:suff}) is proved, and we are done. 
\end{proof}

\begin{proof}[Proof of Corollary \ref{cor:caratt}]
Suppose that $d \leq \kod(X,L) < \dim(X)$. Then we can always find a $d$-dimensional subvariety $V \subsetneq X$ such that the Iitaka fibration $f$ of $L$, restricted to $V$, gives a generically finite map of degree $\delta > 1$. Hence by \cite[Theorem 1.1(1)]{PT} $\vol_{X|V}(L) >0$ and thus by Theorem \ref{thm:fujita} $\|L^d \cdot V\|= \delta \vol_{X|V}(L)$. But this is not possible since $\mu(V,L) \geq \|L^d \cdot V\|$ by Lemma \ref{lem:ineq}.
\end{proof}

\begin{remark}
The proof of Theorem \ref{thm:mu} is based on the comparison between the base ideals $\mathfrak{b}(|mL|)$ and the multiplier ideals $\mathcal{J}(X,\|mL\|)$ on appropriate resolutions. Therefore all comes down to valuations: instead of requiring $L$ to verify property b-($\star$) we could just ask that $v(\mathcal{J}(X, \|mL\|))+v(G) \geq v(\mathfrak{b}(\|mL\|))$ for every divisorial valuation $v$ on $X$ and for every $m \in \mathbb{N}(L)$.
 \hfill $\square$ \end{remark}

In the proof of Theorem \ref{thm:mu} we are not actually using that $L$ verifies property b-($\star$), but rather Equation (\ref{eq:primariduzione}), i.e., that $L$ verifies ($\star$) on $V$. We are then naturally lead to the following more general situation.
Given a big graded linear series $W_\bullet$  on a smooth variety $V$, associated to a divisor $D_V$, we wonder if it is true that there exists an effective divisor $G_V$ on $V$ such that for all $m$ for which $W_m \not = 0$ then 
\begin{equation}\label{eq:stellaV}
\mathcal{J}(\|W_m\|)(-G_V) \subseteq \mathfrak{b}(W_m)
\end{equation}
 (see \cite[Definition 11.1.24]{LazII} for the definition of asymptotic multiplier ideals for graded series). \\
If  $V$ is a subvariety of a smooth projective variety $X$, $L\in \Pic(X)$ and the restricted linear series $W_m:=H^0(X|V,mL)$ satisfies  (\ref{eq:stellaV}) we say that $L$ satisfies property ($\star$)$_V$. The proof of Theorem \ref{thm:mu} yields the following.

\begin{theorem} Let $X$ be a smooth projective variety and $L\in \Pic(X)$ such that $\kod(X,L)\geq 0$. Let $V\subseteq X$ be a smooth irreducible subvariety of dimension $d$. For every integer $m>0$ set $W_m:=H^0(X|V,mL)$. Suppose that
\begin{enumerate}
\item $L$ satisfies property  \textnormal{($\star$)$_V$};
\item $\mathcal{J}(V,\|W_m\|)=\mathcal{J}(X,\|mL\|)_V$.
\end{enumerate}
Then $\mu(V,L)=\|L^d\cdot V\|$.
\end{theorem}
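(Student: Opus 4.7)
The plan is to follow the proof of Theorem \ref{thm:mu} verbatim, carrying out the log resolutions on $V$ instead of on $X$, and invoking hypothesis (2) at the single point where property b-$(\star)$ was used in \emph{loc.\ cit.} Since Lemma \ref{lem:ineq} already gives $\mu(V,L)\geq \|L^d\cdot V\|$, the task is to establish the reverse inequality.

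For each $m\in \mathbb{N}(L)$ I would take a common log resolution $\pi_m:V_m\to V$ of $\mathfrak{b}(W_m)$ and $\mathcal{J}(V,\|W_m\|)$, chosen so that $V_m$ factors through the strict transform of $V$ in a log resolution of $\mathfrak{b}(|mL|)$ on $X$; this is possible because $\mathfrak{b}(W_m)=\mathfrak{b}(|mL|)\cdot \cO_V$. Writing
$$\mathfrak{b}(W_m)\cdot \cO_{V_m}=\cO_{V_m}(-E_m),\qquad \mathcal{J}(V,\|W_m\|)\cdot \cO_{V_m}=\cO_{V_m}(-F_m),$$
and decomposing $\pi_m^*(mL_{|V})=M_m+E_m=N_m+F_m$, the divisor $M_m$ is base-point-free (hence nef) and $N_m$ is nef. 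The inclusion $\mathfrak{b}(W_m)\subseteq \mathcal{J}(V,\|W_m\|)$ forces $N_m\geq M_m$, and property $(\star)_V$ translates to $M_m\leq N_m\leq M_m+\pi_m^*G_V$.

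The lower bound on $\vol((N_m)_{|V_m})$ is where hypothesis (2) enters. By construction of $N_m$,
$$h^0\bigl(V_m,k(N_m)_{|V_m}\bigr)\geq h^0\bigl(V,\cO_V(kmL)\otimes \mathcal{J}(V,\|W_m\|)^k\bigr),$$
and hypothesis (2) rewrites
$$\mathcal{J}(V,\|W_m\|)^k=(\mathcal{J}(X,\|mL\|)\cdot \cO_V)^k=\mathcal{J}(X,\|mL\|)^k\cdot \cO_V.$$
The subadditivity theorem on the ambient $X$ yields $\mathcal{J}(X,\|mL\|)^k\supseteq \mathcal{J}(X,\|kmL\|)$; tensoring with $\cO_V$, letting $k\to\infty$ and then $m\to\infty$ via Lemma \ref{lem:pk}, gives
$$\limsup_{m\to\infty}\frac{\vol((N_m)_{|V_m})}{m^d}\geq \mu(V,L).$$

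To close the argument, the upper bound $\vol((N_m)_{|V_m})\leq \vol((M_m+\pi_m^*G_V)_{|V_m})$, together with the intersection-theoretic expansion from the proof of Theorem \ref{thm:mu} (bounding $M_m\leq \pi_m^*(mA_{|V})$ for any ample $A$ on $X$ with $A-L$ effective, so that the mixed terms $M_m^{d-i}\cdot (\pi_m^*G_V)^i$ are $O(m^{d-i})$), shows that $\vol((M_m+\pi_m^*G_V)_{|V_m})-\vol(M_m)=o(m^d)$. Since $V_m$ was arranged to factor through a log resolution of $\mathfrak{b}(|mL|)$ on $X$, the projection formula identifies $M_m^d$ with the quantity $\tilde M_m^d\cdot \tilde V_m$ appearing in the definition of $\|L^d\cdot V\|$, so $\lim_m \vol(M_m)/m^d=\|L^d\cdot V\|$ and the three bounds combine. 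The principal obstacle is precisely this compatibility: property $(\star)_V$ and hypothesis (2) naturally live on $V$, whereas the asymptotic intersection number is defined via resolutions on $X$; arranging a common birational model on which both sides of the desired inequality can be read off simultaneously is the main technical subtlety.
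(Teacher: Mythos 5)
Your proposal is correct and follows essentially the same route as the paper, which states that ``the proof of Theorem \ref{thm:mu} yields'' this result: the chain of inequalities $\vol(M_m)\leq \vol(N_m)\leq \vol(M_m+\pi_m^*G_V)$, the subadditivity step (performed on $X$ after hypothesis (2) converts $\mathcal{J}(V,\|W_m\|)^k$ into $\mathcal{J}(X,\|mL\|)^k\cdot\cO_V$), the appeal to Lemma \ref{lem:pk}, and the $O(m^{d-1})$ bound on the mixed intersection terms all match. The only cosmetic difference is that you set up the log resolutions on $V$ directly (arranging compatibility with a resolution of $\mathfrak{b}(|mL|)$ on $X$), whereas the paper's argument works on $X_m\to X$ and restricts to the strict transform $V_m\subset X_m$; this is precisely the bookkeeping issue you flag at the end, and your handling of it is sound.
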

Notice that hypothesis (2) in the theorem above is generically verified. 
Precisely, by induction on the codimension of $V$ and argueing as in \cite[\S 9.5.A]{LazII}, one can prove the following.  
\begin{proposition} \label{prop:generale}
Let $X$ be a smooth projective variety and let $D$ be a divisor on $X$ of non-negative Iitaka dimension. Let $c \in \mathbb{N}$, $ 1 \leq c \leq \dim(X)-1$. Let $W_1, \ldots, W_c$ be base point free linear series, maybe not complete. For every $j=1, \ldots, c$ let $H_j \in W_j$ be a general element. Set $V:=H_1 \cap \ldots \cap H_c$. Then $\mathcal{J}(V,\|D\|_V)=\mathcal{J}(X,\|D\|)_V$.
\end{proposition}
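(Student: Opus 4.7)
The plan is to prove the statement by induction on the codimension $c$, following the strategy of \cite[\S 9.5.A]{LazII}. The entire substance of the argument is concentrated in the base case; the inductive step is then a clean iteration using Bertini.

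\emph{Base case $c=1$.} Write $H := H_1$. For each $m \in \mathbb{N}(D)$, I would choose a common log resolution $\mu_m : X_m \to X$ of the base ideal $\mathfrak{b}(|mD|)$ and of the ideal sheaf of $H$, yielding a decomposition $\mu_m^*|mD| = |M_m| + F_m$ with $|M_m|$ base point free. Because $W_1$ is base point free and $H$ is chosen general, Bertini ensures that the strict transform $\widetilde H$ of $H$ is smooth and meets $F_m$ transversally, while avoiding the higher-codimension strata of the exceptional locus. Under these transversality hypotheses, the restriction theorem for multiplier ideals in the form of \cite[Theorem 9.5.35]{LazII} yields the equality (rather than the usual inclusion)
\[
 \mathcal{J}\bigl(H, \tfrac{1}{m}|mD|_H\bigr) = \mathcal{J}\bigl(X, \tfrac{1}{m}|mD|\bigr) \cdot \mathcal{O}_H.
\]
Taking $m$ in a cofinal sub-semigroup of $\mathbb{N}(D)$ to compute both asymptotic ideals, and using a countable Baire-type argument as in Remark \ref{rmk:countable} to guarantee generality of $H$ simultaneously for all $m$, produces the asymptotic version $\mathcal{J}(H, \|D\|_H) = \mathcal{J}(X, \|D\|) \cdot \mathcal{O}_H$.

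\emph{Inductive step.} Assuming the result for $c-1$, set $V' := H_1 \cap \ldots \cap H_{c-1}$; this is smooth and irreducible by Bertini applied successively to each base point free series $W_j$. The inductive hypothesis gives
\[
 \mathcal{J}(V', \|D\|_{V'}) = \mathcal{J}(X, \|D\|) \cdot \mathcal{O}_{V'}.
\]
Since $W_c$ is base point free on $X$, its restriction to $V'$ is base point free on $V'$, and $H_c \cap V'$ is a general element of this restricted series. Applying the base case inside $V'$ yields $\mathcal{J}(V, \|D\|_V) = \mathcal{J}(V', \|D\|_{V'}) \cdot \mathcal{O}_V$, and composing the two displayed equalities completes the induction.

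\emph{Main obstacle.} The genuine difficulty lies entirely in the base case, and specifically in upgrading the standard inclusion of the restriction theorem to an honest equality. This requires exploiting the freedom to move $H$ within the base point free system $W_1$ to avoid the ``bad'' components of log resolutions of $|mD|$, simultaneously for all $m$ in a cofinal sub-semigroup of $\mathbb{N}(D)$. A secondary subtlety is ensuring that $\|D\|_V$ --- interpreted here as the asymptotic multiplier ideal of the restricted graded series $\{H^0(X|V, mD)\}_m$ --- indeed coincides in the limit with what one obtains from the intrinsic series on $V$; generic transversality of $V$ to the base loci of $|mD|$ guarantees this compatibility and is precisely what makes the inductive reduction legitimate at each step.
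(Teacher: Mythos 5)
Your proposal follows exactly the route the paper sketches: the paper itself offers only the one-line hint (``by induction on the codimension of $V$, arguing as in [LazII, \S 9.5.A]''), and your expansion --- reducing to the equality case of the restriction theorem for general members of a base point free system via Bertini, using a countable genericity argument \`a la Remark \ref{rmk:countable} to handle all $m$ in $\mathbb{N}(D)$ simultaneously, then iterating down to codimension $c$ --- is precisely the expected argument. Two small points: the citation ``Theorem 9.5.35'' is not the right one (the equality statement for restriction to a general member of a base point free system is the one in \S 9.5.A of \cite{LazII}, in the vicinity of Example 9.5.9), and in the inductive step you should make explicit that you are applying the restriction theorem to the base ideal $\mathfrak{b}(|pD|)\cdot\mathcal{O}_{V'}$ of the \emph{restricted} graded series on $V'$ rather than to a complete linear series on $V'$ --- this is harmless since the restriction theorem is a statement about ideal sheaves, but saying it out loud is what makes the passage from ``base case'' to ``base case inside $V'$'' legitimate.
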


\section{Property \textnormal{b-($\star$)} and valuations} \label{nefabundant}
%
We have seen that a rather large class of divisors verifies (\ref{fondamentale}). It is then natural to ask if all integral divisors,  with non-negative Iitaka dimension, verify (\ref{fondamentale}). 
In this section we will prove two properties (see Proposition \ref{bound} and Proposition \ref{prop:controes}) of divisors that verify (\ref{fondamentale}) that may be helpful to look for counterexamples. 

We recall two definitions: 

\begin{definition}
Let $X$ be a normal projective variety and let $D$ be a Cartier divisor on $X$ with $\kappa(X,D)\geq 0$. Let $v$ be a discrete valuation on $K(X)$. We say that $D$ is \emph{$v$-bounded} if there exists a positive constant $C_v$ such that $v(|pD|) \leq C_v$ for every $p \in \mathbb{N}(D)$.
 \hfill $\square$ \end{definition}

\begin{definition} \label{asymptotic} (cf. \cite[Definition 2.2]{ELMNP}). 
Let $X$ be a normal projective variety and let $D$ be a Cartier divisor such that $\kappa(X,D)\geq0$. Let $v$ be a divisorial valuation on $X$ (as in \cite[\S 2]{DH}). 
Let $e=e(D)$ be the exponent of $D$. The \textit{asymptotic order of vanishing} of $D$ along $v$ is 
$$v(\| D\|):=\lim_{p \rightarrow \infty} \frac{v(|peD|)}{pe},$$ 
where, for $p$ large enough such that $|peD|\not=\emptyset$, we set $v(|peD|)=v(D')$, where $D'$ is general in $|peD|$. Analogously $v(|peD|)=v(\mathfrak{b}(|peD|))$ where $\mathfrak{b}(|peD|)$ is the base ideal of the non-empty linear system $|peD|$. Actually the limit in the definition coincides with the inf.
 \hfill $\square$ \end{definition}

In particular, we prove below that if a divisor $D$ verifies (\ref{fondamentale}) then, given a divisorial valuation $v$, $v(\|D\|)=0 \Rightarrow D$ is $v$-bounded. Then if, for example, we could find a smooth variety $X$, an effective integral divisor $D$, and a point $x \in X$ such that $\ord_x(|mD|)$ goes to infinity but less than linearly (i.e., $\lim_{m \rightarrow + \infty} \frac{\ord_x(|mD|)}{m} = 0$) then this $D$ would not verify (\ref{fondamentale}).

Recall that 
in general it is an open problem to establish if $v(\| D \|)=0$ implies that $D$ is $v$-bounded.  
However it is already known to be true when $X$ is a smooth projective curve or a normal projective surface (see \cite[Remark 5.44]{Salvo}) 
or when $D$ is a big divisor on an arbitrary normal projective variety (see \cite[Proposition 2.8]{ELMNP}). 
Following \cite{ELMNP}, with a small  further effort we can prove $v$-boundedness for divisors verifying property b-($\star$). 

\begin{proposition} \label{bound}
Let $X$ be a normal projective variety and let $D$ be a divisor on $X$ satisfying property \textnormal{b-($\star$)}. Then there exists a Cartier divisor $N$ on $X$ such that, for every divisorial valuation $v$ on $\mathbb{C}(X)$, $v(\|D\|)=0 \Rightarrow v(|pD|) \leq v(N)$ for every $p \in \mathbb{N}(D)$. In particular for any $v$ such that $v(\|D\|)=0$, $D$ is $v$-bounded.
\end{proposition}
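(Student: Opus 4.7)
The plan is to apply $v$ to both sides of the inclusion provided by property b-($\star$) on a smooth birational model of $X$, and to show that the asymptotic multiplier ideal contributes nothing to the resulting bound once $v(\|D\|)=0$. By property b-($\star$) fix a smooth projective variety $Y$, a birational morphism $\mu:Y\to X$ and an effective divisor $N_Y$ on $Y$ such that
\[
\mathcal{J}(Y,\|p\mu^{*}D\|)\cdot \mathcal{O}_{Y}(-N_Y)\subseteq \mathfrak{b}(|p\mu^{*}D|)\qquad \forall p\in\mathbb{N}(D).
\]
By Lemma \ref{dasotto} I would choose an effective Cartier divisor $N$ on $X$ with $\mu^{*}N\geq N_Y$; this is the divisor claimed in the statement. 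Since $\mu$ is birational, $|p\mu^{*}D|=\mu^{*}|pD|$, so for every divisorial valuation $v$ on $\mathbb{C}(X)=\mathbb{C}(Y)$ we have $v(|p\mu^{*}D|)=v(|pD|)$ and $v(\|\mu^{*}D\|)=v(\|D\|)$.

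The heart of the argument is the key estimate
\[
v\bigl(\mathcal{J}(Y,\|p\mu^{*}D\|)\bigr)\leq p\cdot v(\|\mu^{*}D\|),
\]
which I would prove as follows. Pick $k$ sufficiently large and divisible so that $\mathcal{J}(Y,\|p\mu^{*}D\|)=\mathcal{J}(Y,\tfrac{1}{k}|pk\mu^{*}D|)$, and moreover so that $v$ is realised on a common log resolution $\pi:Z\to Y$ of $\mathfrak{b}(|pk\mu^{*}D|)$, writing $\mathfrak{b}(|pk\mu^{*}D|)\cdot \mathcal{O}_{Z}=\mathcal{O}_{Z}(-F_k)$. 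The standard formula $\mathcal{J}(Y,\tfrac{1}{k}|pk\mu^{*}D|)=\pi_{*}\mathcal{O}_{Z}(K_{Z/Y}-\lfloor\tfrac{1}{k}F_k\rfloor)$ then yields $v(\mathcal{J}(Y,\tfrac{1}{k}|pk\mu^{*}D|))\leq \tfrac{1}{k}v(F_k)=\tfrac{1}{k}v(|pk\mu^{*}D|)$, the log-discrepancy correction only making the bound stronger. Letting $k\to\infty$ the right-hand side tends to $p\cdot v(\|\mu^{*}D\|)$, giving the estimate. Under the hypothesis $v(\|D\|)=0$ this forces $v(\mathcal{J}(Y,\|p\mu^{*}D\|))=0$ for every $p$.

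Taking $v$ of the inclusion from b-($\star$) we conclude
\[
v(|pD|)=v(\mathfrak{b}(|p\mu^{*}D|))\leq v\bigl(\mathcal{J}(Y,\|p\mu^{*}D\|)\bigr)+v(N_Y)=v(N_Y)\leq v(\mu^{*}N)=v(N),
\]
where the penultimate inequality uses that $\mu^{*}N-N_Y\geq 0$ and the final equality is the invariance of the valuation under pullback along a birational morphism. The main obstacle, in my view, is the key estimate above: one must make the stabilisation of the asymptotic multiplier ideal precise, realise $v$ on a log resolution of $\mathfrak{b}(|pk\mu^{*}D|)$, and correctly absorb the log-discrepancy term when extracting $v$ from the explicit formula for the multiplier ideal. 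Once this is done everything else reduces to formal manipulation of valuations on ideals and divisors.
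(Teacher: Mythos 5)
Your overall strategy matches the paper's: pass to the smooth birational model $Y$ from property b-$(\star)$, replace $N_Y$ by $\mu^*N$ via Lemma \ref{dasotto}, apply $v$ to the b-$(\star)$ inclusion, and show that $v(\|D\|)=0$ forces $v(\mathcal{J}(Y,\|p\mu^*D\|))=0$ for all $p$. The setup and the final chain of inequalities are correct. The issue is the \emph{proof} of your ``key estimate'' $v(\mathcal{J}(Y,\tfrac{1}{k}|pk\mu^*D|))\leq \tfrac{1}{k}v(F_k)$.

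You claim this follows from the formula $\mathcal{J}(Y,\tfrac{1}{k}|pk\mu^*D|)=\pi_*\mathcal{O}_Z(K_{Z/Y}-\lfloor\tfrac{1}{k}F_k\rfloor)$ together with $K_{Z/Y}\geq 0$. But this argument only yields a \emph{lower} bound on $v(\mathcal{J})$, not the needed upper bound. Concretely, if $v=\ord_E$ with $E$ on $Z$, the coefficient-wise inequality $K_{Z/Y}-\lfloor\tfrac{1}{k}F_k\rfloor\geq -\tfrac{1}{k}F_k$ gives $\ord_E(\mathcal{J}\cdot\mathcal{O}_Z)\geq \lfloor\tfrac{1}{k}\ord_E(F_k)\rfloor-\ord_E(K_{Z/Y})$; however, for any ideal $\mathcal{I}$ on $Z$ and birational $\pi$, one has $v(\pi_*\mathcal{I})\geq v(\mathcal{I})$ (pushforward can only raise the order, since the constraints on $\pi^*g$ coming from the \emph{other} components $E'$ of the resolution can force $\ord_E(\pi^*g)$ to be larger than the $E$-coefficient alone would suggest). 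So nothing in the displayed formula gives $v(\mathcal{J})\leq \tfrac{1}{k}v(F_k)$; to obtain an upper bound you would have to exhibit an explicit $g\in\mathcal{J}$ of small order along $E$, and the naive candidate (a general element of $\mathfrak{b}(|pk\mu^*D|)$) only yields $v(\mathcal{J})\leq v(F_k)$ --- off by a factor of $k$, which is fatal because $v(|pk\mu^*D|)=o(k)$ need not be bounded.

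The estimate itself \emph{is} true, but proving it requires exactly the ingredient you tried to bypass: the superadditivity of $p\mapsto v(\mathcal{J}(Y,\|p\mu^*D\|))$, which comes from the Demailly--Ein--Lazarsfeld subadditivity theorem $\mathcal{J}(\|(p+q)B\|)\subseteq \mathcal{J}(\|pB\|)\cdot\mathcal{J}(\|qB\|)$. This is what the paper uses: from $\mathfrak{b}(|p\mu^*D|)\subseteq\mathcal{J}(Y,\|p\mu^*D\|)$ one gets $v(\mathcal{J}(Y,\|p\mu^*D\|))\leq v(|p\mu^*D|)$; superadditivity says $\frac{v(\mathcal{J}(Y,\|p\mu^*D\|))}{p}$ converges to its supremum; combining, $\sup_p\frac{v(\mathcal{J}(Y,\|p\mu^*D\|))}{p}\leq v(\|\mu^*D\|)=0$, hence each term vanishes. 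If you want to keep your presentation, replace the log-resolution paragraph by this superadditivity argument; otherwise the proof has a genuine gap at the step flagged above.
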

\begin{proof}
By definition, there exists a smooth projective variety $Y$, an effective divisor $\overline{N}$ on $Y$ and a birational morphism $\mu: Y \rightarrow X$ such that $\mu^*(pD)$ satisfies (\ref{fondamentale}): $\mathcal{J}(Y,\|\mu^*(pD)\|)(-\overline{N}) \subseteq \mathfrak{b}(|\mu^*(pD)|)$ for every $p \in \mathbb{N}(D)$.
First of all, by Lemma \ref{dasotto}, replace $\overline{N}$ by $\mu^*(N)$, with $N$ effective and Cartier on $X$. Given a divisorial valuation $v$, by the hypothesis we have that:
\begin{equation}  \label{e1}
v(\mathcal{J}(Y,\|\mu^*(pD)\|))+v(\mu^*(N)) \geq v(|\mu^*(pD)|), \text{ for every $p \in \mathbb{N}(D)$.}
\end{equation}
Moreover, since $Y$ is smooth, by \cite[Theorem 11.1.8(iv)]{LazII}, we also have that $\mathfrak{b}(|\mu^*(pD)|) \subseteq \mathcal{J}(Y, \|\mu^*(pD)\|)$, that is,  
\begin{equation}\label{e2}
v(|\mu^*(pD)|) \geq v(\mathcal{J}(Y, \|\mu^*(pD)\|)), \text{ for every $p \in \mathbb{N}(D)$.}
\end{equation}
By Demailly, Ein, Lazarsfeld's subadditivity theorem, exactly as in \cite[Section 2]{ELMNP}, we have that $$\lim_{p \rightarrow \infty} \frac{v(\mathcal{J}(Y, \|\mu^*(pD)\|))}{p} = \sup_{p} \frac{v(\mathcal{J}(Y, \|\mu^*(pD)\|))}{p},$$
hence, dividing (\ref{e2}) by $p$ and taking limits, we see that $v(\|D\|)=v(\|\mu^*(D)\|)=0$ forces $v(\mathcal{J}(Y, \|\mu^*(pD)\|))$ to be $0$ for every $p \in \mathbb{N}(D)$. By (\ref{e1}) we are done. 
\end{proof}

Another property of divisors verifying ($\star$) in relation with valuations is the following.
\begin{proposition} \label{prop:controes}
Let $X$ be smooth and let $D$ be an integral divisor such that $\kappa(X,D) \geq 0$. If $D$ satisfies \textnormal{(\ref{fondamentale})}, then for every $v$ divisorial valuation  we have $\sup_{p \in \mathbb{N}(D)} \left\{\frac{v(\mathcal{J}(X, \|pD\|))}{p}\right\} = v(\|D\|)$. 
\end{proposition}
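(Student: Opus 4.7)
The plan is to sandwich the quantity $v(\mathcal{J}(X,\|pD\|))/p$ between two expressions both of which converge to $v(\|D\|)$ as $p\to\infty$ along $\mathbb{N}(D)$, and then to identify the limit of the middle sequence with its supremum.

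First I would exploit the two inclusions available to us. Property $(\star)$ provides an effective divisor $N$ such that $\mathcal{J}(X,\|pD\|)(-N)\subseteq\mathfrak{b}(|pD|)$ for every $p\in\mathbb{N}(D)$; applying $v$ we get
\begin{equation}\label{eq:plan1}
 v(\mathcal{J}(X,\|pD\|))+v(N)\ \ge\ v(\mathfrak{b}(|pD|))=v(|pD|).
\end{equation}
On the other hand, since $X$ is smooth, \cite[Theorem 11.1.8(iv)]{LazII} gives $\mathfrak{b}(|pD|)\subseteq \mathcal{J}(X,\|pD\|)$, hence
\begin{equation}\label{eq:plan2}
 v(|pD|)\ \ge\ v(\mathcal{J}(X,\|pD\|)).
\end{equation}

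Next, I divide (\ref{eq:plan1}) and (\ref{eq:plan2}) by $p$ and let $p\to\infty$ along $e(D)\mathbb{N}$, where $e=e(D)$. By Definition \ref{asymptotic} the right-hand sides tend to $v(\|D\|)$, so the middle quantity $v(\mathcal{J}(X,\|pD\|))/p$ likewise has limit $v(\|D\|)$ along $e\mathbb{N}$.

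Finally I would observe that by the Demailly--Ein--Lazarsfeld subadditivity theorem, $\mathcal{J}(X,\|(p+q)D\|)\subseteq \mathcal{J}(X,\|pD\|)\cdot \mathcal{J}(X,\|qD\|)$, so the sequence $p\mapsto v(\mathcal{J}(X,\|pD\|))$ is superadditive. Hence, exactly as in the proof of Proposition \ref{bound},
$$
 \lim_{p\to\infty}\frac{v(\mathcal{J}(X,\|pD\|))}{p}=\sup_{p\in\mathbb{N}(D)}\frac{v(\mathcal{J}(X,\|pD\|))}{p},
$$
and combining with the previous step yields $\sup_{p\in\mathbb{N}(D)}\frac{v(\mathcal{J}(X,\|pD\|))}{p}=v(\|D\|)$. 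There is no genuine obstacle here: the only mildly delicate point is checking that the limit of $v(\mathcal{J}(X,\|pD\|))/p$ taken along $e\mathbb{N}$ agrees with the supremum taken over all of $\mathbb{N}(D)$, which follows because superadditivity forces the sequence $v(\mathcal{J}(X,\|pD\|))/p$ to attain its supremum as its limit, irrespective of the subsequence along which one evaluates.
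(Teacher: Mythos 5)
Your proof is correct and follows essentially the same route as the paper: the two valuation inequalities coming from property $(\star)$ and from $\mathfrak{b}(|pD|)\subseteq\mathcal{J}(X,\|pD\|)$, divided by $p$, squeeze $v(\mathcal{J}(X,\|pD\|))/p$ to $v(\|D\|)$, and the Demailly--Ein--Lazarsfeld subadditivity (giving superadditivity of $p\mapsto v(\mathcal{J}(X,\|pD\|))$, hence $\lim=\sup$ by Fekete) identifies this limit with the supremum. The paper's proof is just a more compressed version of exactly this argument, deferring the subadditivity/Fekete step to the proof of Proposition~\ref{bound} with the phrase ``As in the proof of the previous proposition, taking limits we can conclude.''
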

\begin{proof}
For every $p \in \mathbb{N}(D)$ $$\frac{v(\mathcal{J}(X,\|pD\|))}{p} + \frac{v(N)}{p} \geq \frac{v(|pD|)}{p}$$ and $$\ \frac{v(\mathcal{J}(X,\|pD\|))}{p} \leq \frac{v(|pD|)}{p}.$$ 
As in the proof of the previous proposition, taking limits we can conclude.
\end{proof}

\bibliographystyle{plain}  
\bibliography{multiplierbaseideal}

\end{document}